\documentclass[12pt]{article}
\usepackage[T1]{fontenc}
\usepackage{lmodern,amsmath,amsthm,amsfonts,amssymb,graphicx,float,microtype,underscore,mathtools,xurl,thmtools,thm-restate}
\usepackage{stmaryrd}
\usepackage[dvipsnames,svgnames,table]{xcolor}
\usepackage[shortlabels]{enumitem}
\setlist[enumerate]{topsep=0ex,itemsep=0ex,parsep=1ex}
\setlist[itemize]{topsep=0ex,itemsep=0ex,parsep=1ex}
\usepackage[unicode=true]{hyperref}
\hypersetup{ 
colorlinks,
breaklinks=true,
linkcolor={blue!60!black},
citecolor={black},
urlcolor={blue!60!black},
pdftitle={Optimal Tree-Decompositions with Bags of Bounded Pathwidth}}
\usepackage[capitalise, compress, nameinlink, noabbrev]{cleveref}
\crefname{lem}{Lemma}{Lemmas}
\crefname{thm}{Theorem}{Theorems}
\crefname{cor}{Corollary}{Corollaries}
\crefname{open}{Open Problem}{Open Problems}

\newcommand{\defn}[1]{\textcolor{Maroon}{\emph{#1}}}
\newcommand{\mathdefn}[1]{\textcolor{Maroon}{#1}}
\usepackage[longnamesfirst,numbers,sort&compress]{natbib}
\makeatletter
\def\NAT@spacechar{~}
\makeatother
\usepackage[tmargin=30mm,bmargin=30mm,lmargin=30mm,rmargin=30mm]{geometry}
\renewcommand{\baselinestretch}{1.09}
\allowdisplaybreaks
\renewcommand{\epsilon}{\varepsilon}
\renewcommand{\emptyset}{\varnothing}

\renewcommand{\geq}{\geqslant}
\renewcommand{\leq}{\leqslant}

\DeclareMathOperator{\dist}{dist}

\DeclareMathOperator{\tw}{tw}

\DeclareMathOperator{\pw}{pw}

\DeclareMathOperator{\td}{td}

\newcommand{\GG}{\mathcal{G}}

\newcommand{\DD}{\mathcal{D}}
\renewcommand{\thefootnote}{\fnsymbol{footnote}}
\theoremstyle{plain}
\newtheorem{thm}{Theorem}
\newtheorem{lem}[thm]{Lemma}
\newtheorem{cor}[thm]{Corollary}
\newtheorem{obs}[thm]{Observation}

\crefname{obs}{Observation}{Observations}
\newtheorem*{lem*}{Lemma}
\theoremstyle{definition}

\newtheorem*{conj*}{Conjecture}
\theoremstyle{remark}
\newtheorem{clms}{Claim}
\newtheorem*{claim}{Claim}
\usepackage{etoolbox}
\newenvironment{proofclaim}[1][]
	{\vspace{-\baselineskip}\begin{proof}[Proof] }{\end{proof}}

\begin{document}
\title{\bf\boldmath\fontsize{18pt}{18pt}\selectfont 
Optimal Tree-Decompositions with\\
Bags of Bounded Pathwidth}

\author{%
Kevin Hendrey\,\footnotemark[2] \quad  
Robert Hickingbotham\,\footnotemark[3] \\ 
J\k{e}drzej Hodor\,\footnotemark[4] \quad  
David~R.~Wood\,\footnotemark[2]
}

\maketitle

\begin{abstract}
We show that every planar graph has a tree-decomposition with optimal width such that the subgraph induced by each bag has pathwidth at most 3. This bound is best possible, and for tree-decompositions that satisfy a certain minimality condition, we in fact give a precise description of the possible structures in each bag. Moreover, we show that the union of any $k$ bags has pathwidth $O(k)$. We also show that graphs excluding a fixed double-apex-forest minor have a tree-decomposition with optimal width such that the subgraph induced by each bag has bounded pathwidth. This includes graphs embeddable on any fixed surface. As a byproduct of our machinery, we give a new proof of the linear grid minor theorem for planar graphs. 
\end{abstract}

\footnotetext[2]{School of Mathematics, Monash University, Melbourne, Australia (\texttt{\{Kevin.Hendrey1,david.wood\}@monash.edu}). Research supported by the Australian Research Council. Research of Wood also supported by NSERC. }

\footnotetext[3]{D\'epartement d'Informatique, Universit\'e libre de Bruxelles, Belgium ({\tt robert.hickingbotham@ulb.be}). Research supported by the Belgian National Fund for Scientific Research (FNRS).}

\footnotetext[4]{Theoretical Computer Science Department, 
Faculty of Mathematics and Computer Science and  Doctoral School of Exact and Natural Sciences, Jagiellonian University, Krak\'ow, Poland (\texttt{jedrzej.hodor@gmail.com}). Research supported by a Polish Ministry of Education and Science grant (Per\l{}y Nauki; PN/01/0265/2022). }

\renewcommand{\thefootnote}{\arabic{footnote}}


\section{Introduction}  
\label{Intro}

Treewidth and tree-decompositions are of central importance to both structural and algorithmic graph theory.
While treewidth is defined in terms of optimal tree-decompositions, the study of tree-decompositions has diverged from the study of treewidth in recent years, with many parameters arising that can be defined in terms of the structure of the bags of tree-decompositions that are not necessarily optimal. This paper builds upon the recent work of Hendrey and Wood \cite{HW26} to further our understanding of the structure of the bags of optimal tree-decompositions.

For a non-null tree $T$, a \defn{$T$-decomposition} of a graph\footnote{We consider simple undirected graphs $G$ with vertex set $V(G)$ and edge set $E(G)$. A graph $H$ is a \defn{minor} of a graph $G$ if a graph isomorphic to $H$ can be obtained from $G$ by a sequence of edge deletions, vertex deletions, and edge contractions. 
A \defn{graph class} is a collection of graphs closed under isomorphism. A graph class $\GG$ is \defn{minor-closed} if for every graph $G\in\GG$ every minor of $G$ is in $\GG$. A graph class $\GG$ is \defn{monotone} if for every graph $G\in\GG$ every subgraph of $G$ is in $\GG$. A graph class $\GG$ is \defn{proper} if some graph is not in $\GG$.} $G$ is a collection $(B_x:x \in V(T))$ such that:
\begin{itemize}
    \item $B_x\subseteq V(G)$ for each $x\in V(T)$,
    \item for each edge ${vw \in E(G)}$, there exists a node ${x \in V(T)}$ with ${v,w \in B_x}$, and
    \item for each vertex ${v \in V(G)}$, the set $\{ x \in V(T) : v \in B_x \}$ induces a non-empty (connected) subtree of $T$.
\end{itemize}
The \defn{width} of such a $T$-decomposition is ${\max\{ |B_x| : x \in V(T) \}-1}$. A \defn{tree-decomposition} is a $T$-decomposition for any tree $T$. The \defn{treewidth} of a graph $G$, denoted \defn{$\tw(G)$}, is the minimum width of a tree-decomposition of $G$. A tree-decomposition of a graph $G$ with width $\tw(G)$ is said to be \defn{optimal}. For the sake of brevity, if $S$ is a set of vertices in a graph $G$, define the \defn{treewidth} of $S$ to be $\tw(G[S])$. Treewidth is the standard measure of how similar a graph is to a tree. Indeed, a connected graph has treewidth at most 1 if and only if it is a tree. See  \citep{HW17,Bodlaender98,Reed97} for surveys on treewidth. 

A \defn{path-decomposition} is a $P$-decomposition for any path $P$. The \defn{pathwidth} of a graph $G$, denoted \defn{$\pw(G)$}, is the minimum width of a path-decomposition of $G$. By definition, $\tw(G)\leq\pw(G)$. Note that pathwidth can be much larger than treewidth. For example, the complete binary tree with $n$ vertices has treewidth 1 and pathwidth $\Theta(\log n)$. 

In addition to studying the width of tree-decompositions, much recent work has studied tree-decompositions where a given graph parameter\footnote{A graph \defn{parameter} $\beta$ takes as input a graph $G$ and outputs a real number $\beta(G)$, such that $\beta(G_1)=\beta(G_2)$ for all isomorphic graphs $G_1$ and $G_2$. 
A graph parameter $\beta$ is \defn{minor-monotone} if $\beta(H)\leq\beta(G)$ for every graph $G$ and every minor $H$ of $G$. Two graph parameters $\beta_1$ and $\beta_2$ are \defn{tied} if there is a function $f$ such that $\beta_1(G)\leq f(\beta_2(G))$ and $\beta_2(G)\leq f(\beta_1(G))$ for every graph $G$.} is bounded on the subgraph induced by each bag. Such parameters include chromatic number~\citep{Seymour16,HK17,BFMMSTT19,HRWY21,KKLRU},
diameter~\citep{DG07,BS24,NSS-AS-I,Hickingbotham25,DK25}, 
independence number~\citep{Yolov18,DMS24a}, and of most relevance to this paper, treewidth~\citep{LNW,HW26}, pathwidth~\citep{LNW}, and treedepth~\citep{LNW}\footnote{The \defn{treedepth} of a graph $G$, denoted $\mathdefn{\td(G)}$, is defined recursively as follows: if $|V(G)|=1$ then $\td(G):=1$; if $G$ is disconnected, then $\td(G)$ is the maximum treedepth of a component of $G$; and if $G$ is connected and $|V(G)|\geq 2$, then $\td(G):=\min\{1+\td(G-v):v\in V(G)\}$. It is well known that $\tw(G)\leq\pw(G)\leq\td(G)-1$ for every graph $G$.}.

For example, \citet{LNW} showed that graphs in any proper minor-closed class have tree-decompositions with bags of bounded pathwidth. In particular, every planar graph has a tree-decomposition in which every bag has pathwidth at most 3. This bound is best possible even for bags of given treewidth whenever the graph $G$ contains $K_4$, since in any tree-decomposition of $G$ each clique of $G$ is contained in a single bag, and $\tw(K_4)=\pw(K_4)=3$. We emphasise that in these results, there is no bound on the width of the tree-decomposition. Indeed the width can be $\Omega(|V(G)|)$. 

\citet{HW26} pushed this direction further by considering tree-decompositions  with optimal width and simultaneously with bags of small treewidth. For example, they showed that every planar graph has an optimal tree-decomposition in which every bag has treewidth at most 3. More generally, \citet{HW26} showed that graphs in any proper minor-closed class admit tree-decompositions that simultaneously have optimal width and every bag has bounded treewidth. 

This paper pushes this direction further by studying optimal tree-decompositions with bags of bounded pathwidth. 
Our first result strengthens all of the above results for planar graphs.

\begin{restatable}{thm}{PlanarOptimalPW}
\label{PlanarOptimalPW3bags}
    Every planar graph has an optimal tree-decomposition in which every bag has pathwidth at most 3. 
\end{restatable}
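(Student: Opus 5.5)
We would start from the result of \citet{HW26} that every planar graph $G$ has an optimal tree-decomposition in which every bag has treewidth at most $3$, and strengthen the choice of decomposition. Among all optimal tree-decompositions $(B_x:x\in V(T))$ of $G$, we choose one that is minimal with respect to a suitable potential: first minimise $\sum_x |B_x|^{2}$ (or the lexicographic vector of bag sizes sorted in decreasing order), then minimise $|V(T)|$. Such ``lean'' or minimal decompositions have strong local properties. Every bag $B_x$ is \emph{well-linked} relative to its adhesion sets: for each neighbour $y$ of $x$ there are $|B_x\cap B_y|$ disjoint paths from $B_x\cap B_y$ to other adhesion sets, running through the branch of $T$ behind $y$. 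Otherwise we could split the bag along a smaller separator and decrease the potential. In particular, the torso of $B_x$ is a minor of $G$, since we can contract the branches of $T-x$ onto the adhesion sets using these linkages. The torso is therefore planar.

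The key steps, in order, are as follows. (1) Establish the minimality properties: each adhesion set $B_x\cap B_y$ is linked into the component of $T-xy$ containing $y$, and each bag $B_x$ cannot be separated by a set smaller than $\tw(G)+1$ in a way compatible with the adhesions. (2) Deduce that $G[B_x]$ has a rigid structure. Since $G[B_x]$ together with the contracted branches forms a planar minor of $G$, and every branch attaches to an adhesion set, the vertices of $B_x$ that are ``interior'' are heavily constrained. We expect to show that $G[B_x]$ is an outerplanar-like graph plus a bounded number of apex-like pieces. More precisely, we aim to show that $G[B_x]$ is a subgraph of a graph obtained from a cycle (or path) by adding chords and at most one extra vertex, or an analogous configuration, each of which has pathwidth at most $3$. (3) Verify directly that each of these candidate structures has pathwidth at most $3$. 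For example, an outerplanar graph whose weak dual is a path has pathwidth at most $2$, and adding one apex gives at most $3$.

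The main obstacle is step (2). Treewidth at most $3$ alone does not bound pathwidth, since trees already have unbounded pathwidth. So we must exploit optimality to rule out bags containing, say, a large complete ternary tree. The idea we would try first is a counting and exchange argument. If $G[B_x]$ contains a subgraph of large pathwidth (a deep branching tree-like structure), then $B_x$ has a balanced separator that is much smaller than $|B_x|$. Combined with the linkages from step (1), this lets us replace $B_x$ by several smaller bags without exceeding width $\tw(G)$, which contradicts minimality of the potential.

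Making this precise requires controlling how the adhesion sets sit inside $B_x$. We would use planarity, via a plane embedding of the torso in which the adhesion cliques bound faces, to show that all adhesion sets lie on a bounded number of faces. This forces $G[B_x]$ to be close to outerplanar with a path-like weak dual, giving pathwidth at most $3$. If this route fails, we would fall back on showing that each bag is a minor of a $(3\times n)$-grid-like structure, which also has bounded pathwidth, and then tighten the constant to $3$ by case analysis.
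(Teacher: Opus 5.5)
Your proposal locates the obstacle correctly: bags of treewidth $3$ can still have large pathwidth. But step (2), which is essentially the whole theorem, is only a hope, and the two tools you plan to use for it do not work.

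First, the torso of $B_x$ (with a clique on each adhesion set) is in general neither a minor of $G$ nor planar. In a normal decomposition with at least two bags, every adhesion set separates $G$. So in a $5$-connected planar triangulation such as the icosahedron, every adhesion set has at least five vertices, and every torso of a normal optimal decomposition contains $K_5$. No linkage argument can change this. What is a minor of $G$ is $G\llbracket B_x\rrbracket$, in which each component of $G-B_x$ is contracted to a single vertex (\cref{lem:torso}).

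Second, a small balanced separator of $G[B_x]$ cannot be used to split the bag. In a refined decomposition, any two vertices of $B_x$ are joined by a path internally disjoint from $B_x$ (\cref{thm:refinedcharacterisation}). So a separator of $G[B_x]$ is not a separator of $G$, and there is nothing to exchange. The usable consequence of minimality is unbreakability (\cref{RefinedUnbreakable}). In a plane embedding this says that no cycle of $G[B_x]$ has vertices of $B_x$ on both sides. By \citet{DF21}, $G[B_x]$ is then outerplanar, a subgraph of a wheel, or a subgraph of an elongated triangular prism (\cref{PlanarDetailed}). The last two have pathwidth at most $3$. The outerplanar outcome, however, a priori allows unbounded pathwidth (complete binary trees are outerplanar). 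Your appeal to ``adhesion sets on a bounded number of faces'' and a ``path-like weak dual'' does not exclude it. Also, it is minimality, not optimality, that constrains the bags; optimality comes for free by taking an atomic decomposition.

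The missing idea is how to eliminate the outerplanar case, and the paper does this by triangulating. If $\tw(G)\le 3$, every optimal decomposition works. Otherwise \citet{BV13} give a planar triangulation $G'\supseteq G$ on the same vertex set with $\tw(G')=\tw(G)$. A refined optimal decomposition of $G'$ is then optimal for $G$, and $G[B]\subseteq G'[B]$, so it suffices to bound $\pw(G'[B])$. For a triangulation:
\begin{itemize}
\item adhesion sets of a refined decomposition are minimal separators, hence induce cycles (\cref{lem:minsep,lem:separatingcycles});
\item bags induce $2$-connected graphs (\cref{PlanarTriang});
\item every non-adjacent pair in a bag lies in a common adhesion set (\cref{cor:adhesionsets}).
\end{itemize}
Suppose $G'[B]$ were outerplanar with a chord $vw$. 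Take $v',w'$ in different components of $G'[B]-\{v,w\}$; they are non-adjacent. Every cycle of $G'[B]$ through both of them contains $v$ and $w$ non-consecutively, so it has the chord $vw$ and is not induced. Hence no adhesion set contains both $v'$ and $w'$, a contradiction. If instead $G'[B]$ has no chord, it is a cycle, and normality rules this out unless $G'=K_3$. So every bag is a subgraph of a wheel or of an elongated triangular prism, and has pathwidth at most $3$ (\cref{lem:pw3graphs}).

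Your plan to work with a minimal decomposition of $G$ itself can be salvaged, but only through a similar step. As in \cref{thm:planarfulldetail}, you pass to an edge-maximal planar supergraph with the same decomposition and cut along adhesion sets of size at most $2$ into triangulations. Some such triangulation step is indispensable.
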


\cref{PlanarOptimalPW3bags} is best possible in several ways. Firstly, the bound of 3 in \cref{PlanarOptimalPW3bags} cannot be improved for any planar graph $G$ that contains $K_4$, since in any tree-decomposition of $G$, each clique of $G$ appears in some bag, and $K_4$ has pathwidth 3. Secondly, `bounded pathwidth' in \cref{PlanarOptimalPW3bags} cannot be replaced by `bounded treedepth'. This is because \citet[Lemma~3.38]{LNW} showed that every tree-decomposition of a triangulated $n\times n$ grid has a bag that contains an $n$-vertex path, and thus has treedepth $\Omega(\log n)$. In fact, \citet{LNW} proved that for a minor-closed class $\GG$, every graph in $\GG$ has a tree-decomposition in which each bag has bounded treedepth if and only if $\GG$ has bounded treewidth.

A key ingredient in the proof of \cref{PlanarOptimalPW3bags} is the notion of a `refined tree-decomposition'~\citep{HW26}, which may be viewed as a type of minimal tree-decomposition; see \cref{TreeDecompositions} for the definition. We characterise the structure of the bags in refined tree-decompositions of an arbitrary graph (\cref{thm:refinedcharacterisation}). This leads to a precise structural characterisation of the bags in a refined tree-decomposition of a planar triangulation (\cref{thm:TriangDetailed}), from which \cref{PlanarOptimalPW3bags} follows.

We extend \cref{PlanarOptimalPW3bags} in multiple ways, first considering the pathwidth of a union of bags (see \cref{ManyBags}). 

\begin{restatable}{thm}{PlanarOptimalManyBagsStatement}
\label{PlanarOptimalManyBags}
    Every planar graph $G$ has an optimal tree-decomposition $(B_x:x\in V(T))$ such that $\pw(G[B_x])\leq3$ for each $x\in V(T)$, and for any $X\subseteq V(T)$ with $|X|\geq 2$ we have $\pw(G[\bigcup\{B_x:x\in X\}])\leq 28|X|-24$.
\end{restatable}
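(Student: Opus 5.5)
We follow the route of \cref{PlanarOptimalPW3bags}. First reduce to planar triangulations: every planar graph $G$ is a spanning subgraph of a planar triangulation $G'$ with $\tw(G')=\tw(G)$ (or we work with a suitable supergraph of the same treewidth, as in \citep{HW26}). Pathwidth is monotone under subgraphs, so an optimal tree-decomposition of $G'$ with the stated properties restricts to one of $G$. Next take a refined optimal tree-decomposition $(B_x:x\in V(T))$ of $G'$, which exists by \citep{HW26}. By \cref{thm:TriangDetailed}, each $G'[B_x]$ has the precise structure described there, and this already gives $\pw(G'[B_x])\leq 3$. It remains to bound the pathwidth of unions of bags.

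For the union bound, the plan is to extract from \cref{thm:TriangDetailed} a decomposition of each bag into a small number of ``pieces''. Each piece should be a path (or a cycle) $P$ of $G'$ together with boundedly many attached vertices, such that $G'[B_x]$ has a path-decomposition in which every bag meets each piece in $O(1)$ vertices. The key structural claim is this: for $X\subseteq V(T)$, the graph $H:=G'[\bigcup_{x\in X}B_x]$ is obtained from the disjoint union of the bag graphs by adding only edges between bags. Moreover, each vertex lies on a bounded number of pieces per bag. Then take path-decompositions $(W^x_1,\dots,W^x_{m_x})$ of the individual bags, each of width at most $3$, and reparametrise them all over a common index set. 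The reparametrisation must be chosen so that, for every edge $uv$ of $H$ with $u\in B_x$ and $v\in B_y$, the intervals for $u$ in the decomposition of $x$ and for $v$ in the decomposition of $y$ intersect. The union of the reparametrised bags at each index then has size at most $\sum_{x\in X}4=4|X|$ plus a correction. The correction comes from vertices lying in several bags, whose intervals must be merged, and from extra ``hub'' vertices that have to be added to every bag.

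The hardest step is the synchronisation. Arbitrary path-decompositions of different bags cannot be aligned consistently, so I would instead exploit planarity. Each bag of a refined decomposition of a triangulation is essentially bounded by a separating cycle structure, as in \cref{thm:TriangDetailed}. So I would linearly order the vertices of each bag along a common geometric sweep, for instance by distance from a fixed root vertex (a BFS layering) or by a common curve in the plane, so that each bag's path-decomposition is monotone along that sweep. For two bags $x\neq y$, the intersection $B_x\cap B_y$ and the edges between them should lie near a separator of $G'$ corresponding to the tree path between $x$ and $y$. I expect this separator to contribute $O(1)$ vertices per bag that must be made universal. Counting four vertices per bag for width, plus roughly $24$ universal vertices per bag from these boundary pieces, gives at most $28|X|$ vertices per bag of the combined decomposition. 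I expect the $-24$ saving for $|X|\geq 2$ to arise because one bag's boundary contribution is absorbed by the others, giving width at most $28|X|-24$. If the sweep approach fails, the fallback is an induction on $|X|$: add one bag at a time, use the adhesion to $X\setminus\{x\}$, and add its boundary vertices to all bags of the current decomposition. This costs $28$ per added bag, and the base case $|X|=2$ is handled directly from the structure theorem, giving $32=28\cdot2-24$.
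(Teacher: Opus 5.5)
Your reduction to a refined optimal tree-decomposition of a planar triangulation $G'$ is what the paper does. You should treat $\tw(G)\leq 3$ separately, though, since \cref{lem:twtriangulation} only gives $\tw(G')=\max\{3,\tw(G)\}$; in that case every bag has at most $4$ vertices and $4|X|\leq 28|X|-24$.

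The union bound, however, is not proved, and the step your plan rests on is false. You assume bags interact only through separators contributing $O(1)$ vertices per bag. But each adhesion $B_x\cap B_y$ induces a cycle of $G'$ (\cref{lem:separatingcycles,lem:minsep}). By \cref{cor:adhesionsets}, every non-adjacent pair of $B_x$ lies in one of these cycles, so a largest bag (of size $\tw(G)+1$) has adhesions of length of order $\tw(G)$. Making boundary or adhesion vertices universal, whether in the sweep or in your inductive fallback, therefore costs width depending on $\tw(G)$, not only on $|X|$. For the same reason:
\begin{itemize}
\item $G'[\bigcup_{x\in X}B_x]$ is not a disjoint union of bag graphs plus edges, since bags share entire long cycles;
\item merging the intervals of shared vertices is uncontrolled;
\item the base case $|X|=2$ does not follow from \cref{thm:TriangDetailed}, which describes each bag in isolation.
\end{itemize}
The BFS sweep fails even for a single bag: a cycle can weave between two BFS layers arbitrarily often, so a path-decomposition monotone in BFS distance can have unbounded width. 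Finally, the constants $24$ and $-24$ are read off the target rather than derived.

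Here is what is missing. By \cref{thm:TriangDetailed} you can write $B_x=B'_x\cup S_x$, where $G'[B'_x]$ is an induced cycle and either $|S_x|=1$ or $G'[S_x]$ is an induced cycle not crossing $G'[B'_x]$. The key global fact is that no two of these cycles cross, even across different bags. If $C'$ crossed $C$, then $V(C')$ would meet two components of $G'-V(C)$, contradicting \cref{ComponentNormalisation}, since $V(C)$ lies in a bag.

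One then needs a lemma bounding the pathwidth of a plane graph whose vertex set is covered by $m\geq 2$ non-crossing induced cycles; the paper proves the bound $14m-24$ (\cref{lem:noncrossingpw}) in three steps.
\begin{enumerate}
\item Replace the graph by one containing it as a minor in which the cycles are pairwise disjoint.
\item Delete the endpoints of the $O(m)$ `transition' edges, plus one vertex from each cycle not yet hit. Transition edges are those between cycles that, after contracting each cycle to a vertex, border a face of length at least $3$. They are counted via Euler's formula, which is where the $-24$ comes from.
\item Each remaining component is a chain of paths from distinct cycles, with edges only between consecutive paths. So it can be drawn on at most $m$ parallel lines, and \cref{thm:geometricpw} applies.
\end{enumerate}
Let $X_1:=\{x\in X:|S_x|=1\}$ and $X_2:=X\setminus X_1$. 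Applying the lemma to the $|X|+|X_2|$ cycles and adding the $|X_1|$ extra vertices gives $\pw(G'[\bigcup_{x\in X}B_x])\leq 14(|X|+|X_2|)-24+|X_1|\leq 28|X|-24$.
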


Next consider graphs embeddable in a fixed surface. \citet{LNW} proved that every graph with Euler genus\footnote{The \defn{Euler genus} of an orientable surface with $h$ handles is $2h$. The \defn{Euler genus} of a non-orientable surface with $c$ cross-caps is $c$. The \defn{Euler genus} of a graph $G$ is the minimum Euler genus of a surface in which $G$ embeds (with no crossings).} $g$ has a tree-decomposition in which every bag has treewidth $O(g)$, and this bound is best possible. \citet{HW26} strengthened this result, by showing that every graph with Euler genus $g$ has an optimal tree-decomposition in which every bag has treewidth $O(g)$. We improve this result to bounded pathwidth, albeit with a worse bound. 

\begin{thm}
\label{EulerGenusOptimalTWgbags}
    For every integer $g\geq 0$ there exists $c$ such that every graph with Euler genus $g$ has an optimal tree-decomposition in which every bag has pathwidth at most $c$.
\end{thm}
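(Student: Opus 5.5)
The plan is to reduce to the key ingredient mentioned in the introduction: refined tree-decompositions in the sense of \citet{HW26}, together with the characterisation of their bags in \cref{thm:refinedcharacterisation}. \citet{HW26} showed that every graph has an optimal tree-decomposition that is refined, and that in a graph of Euler genus $g$ each bag of such a decomposition has treewidth $O(g)$. I would fix $g$ and a graph $G$ of Euler genus $g$. First I would reduce to a well-behaved host: edge-maximal embedded graphs (triangulations of the surface, after adding edges without increasing treewidth beyond what is needed). Alternatively, I would use that taking a supergraph $G'$ with $\tw(G')=\tw(G)$ is harmless, since an optimal decomposition of $G'$ restricted to $G$ is optimal, and induced subgraphs of bags only lose pathwidth. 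Then I would take an optimal refined tree-decomposition $(B_x:x\in V(T))$ of $G$ and aim to bound $\pw(G[B_x])$ by some $c(g)$.

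The bag structure is what needs controlling. In the planar case (\cref{thm:TriangDetailed}), the characterisation presumably shows that each bag induces a subgraph built from a bounded number of paths or cycles: separators of a triangulation that arise as refined bags look like a cycle plus a few attachments. So the bag is a ``thin'' structure of pathwidth at most $3$. On a surface, I would use \cref{thm:refinedcharacterisation} to argue that each bag $B_x$ consists of the separators $B_x\cap B_y$ for neighbours $y$, glued along a structure in which each relevant separation of the triangulated surface is a union of $O(g)$ noncontractible or contractible cycles. Concretely, I expect $G[B_x]$ to decompose as a set $Z$ of $O(g)$ ``special'' vertices (where cycles meet or the topology is nontrivial) together with a subgraph whose components are the planar-type pieces from the planar analysis, each of pathwidth at most $3$. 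Pathwidth is then bounded by $|Z|$ plus the pathwidth of $G[B_x]-Z$. That pathwidth is at most $3\cdot O(g)$ if the pieces are arranged along $O(g)$ paths or cycles, since a union of $k$ cycles with bounded interactions has pathwidth $O(k)$.

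An alternative route, if the characterisation is hard to transfer, is a cutting argument. I would cut the surface along $O(g)$ shortest noncontractible cycles (or use a planarising set of vertices $Z$ lying on $O(g)$ geodesic paths, as in standard Euler genus reductions). Then I would apply the planar result to $G-Z$ while keeping optimal width. The difficulty is that optimality does not survive deleting $Z$ and re-adding it, so I prefer working directly with refined decompositions of $G$. There the structure only needs to be read off from the bags.

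The main obstacle is the structural step: showing that bags of a refined decomposition in a bounded-genus triangulation are ``cycle-like up to $O(g)$ exceptions'' with bounded pathwidth. The planar proof presumably uses the Jordan curve theorem to show that every face-respecting separator of a refined decomposition is essentially a cycle. On a surface, separators can consist of up to $O(g)$ cycles, and their unions across neighbouring bags could a priori interact in complicated ways. What I would try first is to bound the number of ``branch points'' in $G[B_x]$, meaning vertices of degree at least $3$ in the relevant skeleton. I would do this via an Euler-characteristic count on the embedded bag subgraph, giving $O(g)$ branch points plus planar-like parts. Then I would conclude with the bound $\pw\leq|Z|+O(1)\cdot(\#\text{paths})$. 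The constant $c$ need not be explicit, matching the statement.
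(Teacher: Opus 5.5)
There is a genuine gap. The step you flag as ``the main obstacle'' is the entire content of the theorem, and the proposal gives no argument for it. The tools you would transfer are planar-specific. The thin bag structure in \cref{thm:TriangDetailed} comes from three ingredients: unbreakability (\cref{RefinedUnbreakable}), the Jordan curve theorem (\cref{PlanarNonSep}), and the Dehkordi--Farr characterisation (\cref{PlanarDetailed}). On a surface of positive Euler genus, a non-contractible cycle of $G[B_x]$ need not separate anything. So unbreakability gives no comparable restriction, and there is no surface analogue of Dehkordi--Farr to invoke.

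Your substitute, an Euler-characteristic count on the embedded graph $G[B_x]$, cannot work on its own. Euler's formula holds whatever $G[B_x]$ is, and an induced subgraph of a genus-$g$ graph can be a large complete binary tree, with many branch points and large pathwidth. Whatever makes a bag thin must come from how the components of $G-B_x$ attach to $B_x$, and the proposal never uses this. The auxiliary steps have the same problem:
\begin{itemize}
\item \cref{lem:twtriangulation} holds only for planar graphs.
\item The edge-maximal supergraph argument of \cref{thm:planarfulldetail} uses planarity (2-sums, facial cycles, no outerplanar $K_4$).
\item The closing bound ``$|Z|$ plus $O(1)$ times the number of paths'' needs strong control over edges between the pieces. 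A graph covered by a single Hamiltonian path can have huge pathwidth, and even in the planar case this needed \cref{PairwiseDisjointCycles}.
\end{itemize}

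The missing idea is that no topology is needed beyond one excluded minor. A graph of Euler genus $g$ is $K_{3,2g+3}$-minor-free, and $K_{3,2g+3}\subseteq K_{1,2g+3}+\overline{K_2}\preccurlyeq T+\overline{K_2}$ for a complete binary tree $T$. The paper therefore applies \cref{thm:selfsimilarcase} via \cref{DoubleApexForest}. The argument works in the torso $G\llbracket B\rrbracket$ of a refined bag, where by \cref{lem:torso} every non-adjacent pair of $B$ has a common external neighbour and no external vertex sees all of $B$.

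Let $F=T_{k_0}+\overline{K_2}$ be excluded, and use robustness to pick $T_{k_1}$ with $T_{k_0}\preccurlyeq T_{k_1}-v$ for every $v$. Then no external vertex can touch every branch set of a model of $T_{k_1}$ in $G[B]$: contracting all external vertices into a non-neighbour $v$ would produce $F$. If $G[B]$ had a huge complete binary tree minor, self-similarity would give $m^d$ nested representatives, where $m=|V(T_{k_1})|$. A prefix-counting argument shows each external vertex sees at most $(m-1)^d$ of them. Greedily contracting the external vertices then yields a minor on the representatives dense enough for Mader's theorem to force $F$, a contradiction.

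Since complete binary trees form a container for pathwidth, this bounds $\pw(G[B])$ for every bag of every refined tree-decomposition. To finish, take an atomic tree-decomposition. It is optimal, because lexicographic minimality of $N(\DD)$ first minimises the largest bag, and it is refined by \cref{lem:refinement}.
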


\cref{EulerGenusOptimalTWgbags} is proved in \cref{Extensions}. In fact, this result holds for graphs excluding any fixed double-apex-forest minor. Here a graph $H$ is \defn{double-apex-forest} if $H-v-w$ is a forest for some non-adjacent vertices $v,w\in V(H)$. The method introduced in this proof is very general, and applies in other settings of interest. In particular, for a broad collection  of minor-monotone graph parameters $\beta$ (which includes treedepth, pathwidth and treewidth), we determine which minor-closed graph classes $\mathcal{C}$ have the property that every bag of every refined tree-decomposition of a graph from $\mathcal{C}$ has bounded $\beta$. 

We finish this introduction with the natural open problem arising from this work: Do graphs in any proper minor-closed class have an optimal tree-decomposition with bags of bounded pathwidth? 

\section{Preliminaries}

We consider simple finite undirected graphs $G$ with vertex-set $V(G)$ and edge-set $E(G)$.
For each vertex $v\in V(G)$, let $\mathdefn{N_G(v)}:=\{w\in V(G):vw\in E(G)\}$ and $\mathdefn{N_G[v]} := N_G(v)\cup\{v\}$. 
Let $\mathdefn{\overline{G}}$ denote the complement of $G$, and let $\mathdefn{G+H}$ denote the graph obtained from the disjoint union of graphs $G$ and $H$ by adding an edge between each vertex of $G$ and each vertex of $H$. 

The operation of \defn{contracting} an edge $vw$ of $G$ consists of deleting $v$ and $w$ from $G$ and adding a new vertex $z$ adjacent to $N_G(v)\cup N_G(w) \setminus \{v,w\}$.
A \defn{minor} of $G$ is any graph that is isomorphic to a graph that can be obtained from a subgraph of $G$ by contracting edges.
A \defn{model} of a graph $H$ in $G$ is a collection $(M_v:v\in V(H))$ of pairwise disjoint subsets of $V(G)$, called \defn{branch-sets}, such that for each $v\in V(H)$ the graph $G[M_v]$ is connected, and for each $vw\in E(H)$ there is an edge from $M_v$ to $M_w$.
It is easy to see that a graph isomorphic to $H$ can be obtained from $G[\bigcup\{M_v:v\in V(H)\}]$ by contracting the edges of a spanning subtree of each graph in $\{G[M_v]:v\in V(H)\}$, and deleting edges $vw\in E(G)$ with $v\in M_a$ and $w\in M_b$ where $ab\in E(\overline{H})$. Likewise, if $H$ is obtained from a subgraph $H'$ of $G$ by contracting all edges in $E'\subseteq E(H')$, then the components of the spanning subgraph of $H'$ with edge set $E'$ form a minor model of $H$ in $G$.
Thus graph minors can be equivalently defined in terms of models.
We write $\mathdefn{H\preccurlyeq G}$ to mean $H$ is a minor of $G$.

For graphs $A$ and $B$, the \defn{Cartesian product $A\square B$} is the graph with vertex-set $V(A)\times V(B)$ where $(x,y_1)(x,y_2)$ is an edge of $A\square B$ for each $x\in V(A)$ and each $y_1y_2\in E(B)$, and
$(x_1,y)(x_2,y)$ is an edge of $A\square B$ for each $x_1x_2\in E(A)$ and each $y\in V(B)$. 

See \citep{Diestel5} for any undefined graph-theoretic terminology. 

\section{Tree-Decompositions}
\label{TreeDecompositions}

This section presents results about normal, atomic and refined tree-decompositions, starting with results from the literature, moving on to some new results that will be used in later sections.

Let $(B_x:x\in V(T))$ be a tree-decomposition of a graph $G$. If $B_x\subseteq B_y$ for some edge $xy\in E(T)$, then let $T'$ be the tree obtained from $T$ by contracting $xy$ into a new vertex $z$, and let $B_z:=B_y$. Then $(B_x:x\in V(T'))$ is a tree-decomposition of $G$ with width equal to the width of $(B_x:x\in V(T))$, and $|V(T')|<|V(T)|$. To \defn{normalise} a tree-decomposition means to apply this operation until $B_x\not\subseteq B_y$ for each edge $xy\in E(T)$. This process terminates, since each contraction decreases $|V(T)|$ by exactly $1$. A tree-decomposition $(B_x:x\in V(T))$ with $B_x\not\subseteq B_y$ for each $xy\in E(T)$ is said to be \defn{normal}. Applying normalisation to an optimal tree-decomposition of a graph $G$ gives a tree-decomposition of $G$ that is optimal and normal.

Say $(B_x:x\in V(T))$ is a normal tree-decomposition of a graph $G$ with $V(G)\neq\emptyset$. Root $T$ at an arbitrary node $r\in V(T)$. For each edge $xy\in E(T)$ where $y$ is the parent of $x$, let $f(xy)$ be any vertex in $B_x\setminus B_y$. Thus $f$ is an injection from $E(T)$ to $V(G-B_r)$. So $|V(G)|-1\geq |V(G)|-|B_r|\geq|E(T)|=|V(T)|-1$, and $|V(T)|\leq|V(G)|$. That is, every normal tree-decomposition of a graph $G$ has at most $|V(G)|$ bags, which is a well-known fact. 

For a tree-decomposition $\DD=(B_x:x\in V(T))$ of a graph $G$ and a positive integer $i$, let $\mathdefn{n_i(\DD)}$ be the number of nodes $x\in V(T)$ such that $|B_x|=i$, and let 
\[\mathdefn{N(\DD)} :=(n_{|V(G)|}(\DD),n_{|V(G)|-1}(\DD),\dots,n_1(\DD)).\]
Sometimes $N(\DD)$ is called the \defn{fatness} of $\DD$. A tree-decomposition $\DD$ is \defn{atomic} if there is no tree-decomposition $\DD'$ of $G$ such that $N(\DD')$ is lexicographically less than $N(\DD)$. This concept was essentially introduced by \citet{Thomas90} and has since been studied by various authors~\citep{Muller12,DM16,DM18,EW19,AACHS25,CDN16,CGHLS24,Weissauer19}. The following lemma is implicit in the work of \citet{Thomas90}, and explicitly proven by \citet{HW26}.

\begin{lem}[{\protect\citep[Lemma 5]{HW26}}] 
\label{lem:atomic}
    Every graph has an atomic tree-decomposition, and every atomic tree-decomposition is normal.
\end{lem}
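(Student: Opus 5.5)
Both claims follow from the lexicographic ordering on fatness vectors together with the normalisation operation described above.

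\emph{Existence.} We show that the set of possible fatness vectors is finite and non-empty. It is non-empty because $G$ has the trivial tree-decomposition with a single bag $V(G)$.

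Finiteness needs some care, since a tree-decomposition may have arbitrarily many bags. I would restrict attention to tree-decompositions with non-empty bags and at most $|V(G)|$ nodes. For these, each $n_i(\DD)$ is an integer in $\{0,\dots,|V(G)|\}$, so only finitely many vectors $N(\DD)$ arise, and a lexicographically minimal one exists.

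It remains to check that restricting to this family loses nothing. Given an arbitrary tree-decomposition $\DD'$, normalise it to obtain $\DD''$. Each contraction step merges $B_x$ into $B_y$ with $B_x\subseteq B_y$, so it removes one bag and changes no other bag. Hence exactly one coordinate $n_{|B_x|}$ drops by one and the rest are unchanged, so $N$ strictly decreases lexicographically. This assumes $|B_x|\geq 1$; a bag with $|B_x|=0$ is not recorded by $N$ at all, so empty bags need a separate remark. They can always be merged away, since an empty bag is a subset of any neighbouring bag, and doing so leaves $N$ unchanged.

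So $N(\DD'')\le N(\DD')$ lexicographically. By the counting argument given earlier in this section, $\DD''$ has at most $|V(G)|$ bags. Therefore a minimiser over our finite family is also a global minimiser, that is, an atomic tree-decomposition.

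\emph{Atomic implies normal.} Suppose $\DD$ is atomic but not normal, so $B_x\subseteq B_y$ for some edge $xy\in E(T)$. Contract $xy$ to obtain $\DD'$. If $B_x\neq\emptyset$, then by the computation above $N(\DD')$ is lexicographically less than $N(\DD)$, contradicting atomicity.

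The delicate point, which I expect to be the only real obstacle, is the case $B_x=\emptyset$: contracting such an edge leaves $N$ unchanged, so no contradiction arises. I would first check how the definition treats this case, since $n_i$ is only recorded for $i\ge1$. If an atomic decomposition could contain an empty bag adjacent to another bag, it would not be normal, and the statement would be false. Two remedies are available:
\begin{itemize}
\item interpret tree-decompositions as having non-empty bags (for $V(G)\neq\emptyset$), which is the convention implicit in the earlier counting; or
\item extend $N$ with an $n_0$ coordinate.
\end{itemize}
Under either, removing the empty bag strictly decreases $N$, and the contradiction goes through. I would adopt one of these conventions explicitly, and then both parts follow.
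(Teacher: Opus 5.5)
Your argument is correct and is essentially the intended one. The paper defers this proof to Hendrey and Wood, but the two facts it establishes just before the lemma are exactly your ingredients: contracting an edge $xy$ with $B_x\subseteq B_y$ deletes one bag, so it strictly lowers $N$ lexicographically when $B_x\neq\emptyset$; and a normal tree-decomposition has at most $|V(G)|$ bags, which makes the minimisation finite.

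Your worry about empty bags is justified rather than pedantic. Under the definitions as written, bags may be empty and $N$ records only $n_{|V(G)|},\dots,n_1$. So attaching a new leaf with an empty bag to an atomic tree-decomposition gives another atomic one that is not normal, and the second assertion really does need one of your two conventions. The existence of a tree-decomposition that is both atomic and normal still holds under the literal definitions, since normalising an atomic tree-decomposition can only remove empty bags.
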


\citet{HW26} defined a tree-decomposition $\DD'$ of a graph $G$ to be a \defn{refinement} of a tree-decomposition $\DD$ of $G$ if each bag of $\DD'$ is a subset of a bag of $\DD$. If $\DD'$ is a refinement of $\DD$ and not vice versa, then $\DD'$ is a \defn{proper refinement} of $\DD$. A tree-decomposition is \defn{refined} if it is normal and has no proper refinement.

\begin{lem}[{\protect\citep[Lemma 6]{HW26}}] 
\label{lem:refinement}
    Every atomic tree-decomposition is refined.
\end{lem}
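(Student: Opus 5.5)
Let $\DD=(B_x:x\in V(T))$ be an atomic tree-decomposition of $G$. By \cref{lem:atomic}, $\DD$ is normal, so it remains to show that $\DD$ has no proper refinement. I argue by contradiction: suppose $\DD'=(B'_y:y\in V(T'))$ is a refinement of $\DD$ and $\DD$ is not a refinement of $\DD'$. The plan is to build from $\DD'$ a tree-decomposition whose fatness is lexicographically smaller than $N(\DD)$, contradicting atomicity. To avoid comparing fatness vectors of decompositions with different numbers of bags, I first normalise $\DD'$. Normalising only merges a bag into a superset bag, so the resulting bags are a subset of the original bags. Hence the normalised $\DD'$ is still a refinement of $\DD$. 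It is also still not a refinement-target of $\DD$: if every bag of $\DD$ lay in a bag of the normalised $\DD'$, it would lie in a bag of the original $\DD'$. So I may assume $\DD'$ is normal.

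The key comparison is between the multisets of bag sizes. Every bag $B'_y$ lies in some bag $B_x$, and some bag $B_{x_0}$ of $\DD$ lies in no bag of $\DD'$. I claim that $N(\DD')<N(\DD)$ lexicographically. To see this, look at the largest size $k$ at which the multisets of large bags differ. The cleanest route is to compare the set families directly. Let $\mathcal{B}$ be the set of distinct bags of $\DD$; by normality, distinct nodes of the same size carry distinct bags, or at least can be handled with multiplicities. Take a bag $B$ of $\DD$ that is maximal under inclusion among bags of $\DD$ not contained in any bag of $\DD'$, and then of maximum size among such bags. Every bag of $\DD'$ of size at least $|B|$ lies in some bag of $\DD$, and the bags of $\DD$ containing it have size at least $|B|$.

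Here normality alone is not enough to conclude the counts match, so I would instead use a known trick from \citet{Thomas90}: atomic decompositions are \emph{lean}. That is, for any two bags $B_x,B_y$ and subsets $Z_1\subseteq B_x$, $Z_2\subseteq B_y$ with $|Z_1|=|Z_2|$, either there are $|Z_1|$ disjoint $Z_1$--$Z_2$ paths, or some separator on the $x$--$y$ path of $T$ is smaller. Alternatively, and more directly, I would argue as follows. Given $\DD'$, construct the hybrid decomposition obtained by replacing, in $\DD$, the bag $B_{x_0}$ by the restriction of $\DD'$ to $B_{x_0}$. Concretely, take the decomposition $(B'_y\cap B_{x_0}: y\in V(T'))$ of $G[B_{x_0}]$ augmented so that, for each neighbour $z$ of $x_0$ in $T$, the adhesion $B_{x_0}\cap B_z$ is contained in some bag. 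The latter requires that each adhesion set is covered by a bag of $\DD'$.

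Making that substitution valid is the main obstacle. An adhesion need not be a clique, so $\DD'$ need not contain it in a single bag. I would handle this by instead intersecting globally: define $\DD''$ on $T'$ with bags $B'_y$, and note that $\DD''$ already uses only bags that are subsets of $\DD$-bags. I would then prove the lexicographic inequality by induction on the size classes from largest to smallest. At the top size $\tw(G)+1$, each $\DD'$-bag of that size equals a $\DD$-bag. Sizes can only shrink, while $B_{x_0}$ disappears. The count argument then shows that the first size at which the vectors differ has strictly fewer bags in $\DD'$. Making this rigorous, with multiplicities handled by normality and the injection $f$ from edges to vertices, is where I expect the work, and I would fall back on citing Thomas's leanness if the direct counting fails.
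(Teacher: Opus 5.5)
There is a genuine gap: the proposal never actually proves $N(\mathcal{D}')<N(\mathcal{D})$. Your setup is right. Normalising $\mathcal{D}'$ keeps it a proper refinement, lexicographic decrease of fatness is the right target, and your choice of $B$ (a largest bag of $\mathcal{D}$ contained in no bag of $\mathcal{D}'$) is the right one. But at the decisive step you assert that ``normality alone is not enough'', and then move elsewhere. You turn to Thomas's leanness, which you never connect to refinements. You then try a hybrid substitution, which you yourself note fails because adhesion sets need not lie in a single bag of $\mathcal{D}'$. Finally you sketch ``sizes can only shrink, while $B_{x_0}$ disappears'', which does not suffice. Even when $n_s(\mathcal{D}')=n_s(\mathcal{D})$ for all $s>t$, you must exclude a bag of $\mathcal{D}'$ of size $t$ lying strictly inside a larger bag of $\mathcal{D}$, since such bags could make $n_t(\mathcal{D}')$ exceed $n_t(\mathcal{D})$. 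Incidentally, fatness vectors always have $|V(G)|$ coordinates, so normalising is not about comparability; its real role is to exclude such nested or repeated bags.

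The missing observation is that normality already gives this: in a normal tree-decomposition no bag is contained in \emph{any} other bag, not just a neighbouring one. Suppose $B_x\subseteq B_y$ with $x\neq y$, and let $x'$ be the neighbour of $x$ on the $x$--$y$ path in $T$. Each vertex of $B_x$ lies in both $B_x$ and $B_y$, hence in $B_{x'}$, contradicting normality. Apply this to both $\mathcal{D}$ and $\mathcal{D}'$, so that in each, distinct nodes carry distinct, pairwise incomparable bags. Now set $t:=|B|$.
\begin{itemize}
\item A bag $A$ of $\mathcal{D}$ with $|A|>t$ lies in a bag $A'$ of $\mathcal{D}'$, which lies in a bag $A''$ of $\mathcal{D}$. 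Incomparability in $\mathcal{D}$ gives $A=A''$, hence $A=A'$ is a bag of $\mathcal{D}'$.
\item A bag $A'$ of $\mathcal{D}'$ with $|A'|\geq t$ lies in a bag $A$ of $\mathcal{D}$. If $|A|>t$, then $A$ is a bag of $\mathcal{D}'$ and incomparability in $\mathcal{D}'$ gives $A'=A$. If $|A|=t$, then $A'=A$ by size.
\end{itemize}
So the bags of size greater than $t$ coincide in the two decompositions, and every bag of $\mathcal{D}'$ of size $t$ is a bag of $\mathcal{D}$ other than $B$. Hence $n_s(\mathcal{D}')=n_s(\mathcal{D})$ for $s>t$ and $n_t(\mathcal{D}')<n_t(\mathcal{D})$, contradicting atomicity. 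No leanness, hybrid decomposition, or edge-to-vertex injection is needed.
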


\cref{lem:atomic,lem:refinement} imply that every graph has a refined tree-decomposition, which we henceforth use implicitly.

\begin{lem}[{\protect\citep[Lemma 7]{HW26}}] 
\label{ComponentNormalisation}
    For every refined tree-decomposition $\DD=(B_x:x\in V(T))$ of a graph $G$, for all nodes $x,y\in V(T)$ (not necessarily distinct), for any subset $S\subseteq B_y$, the bag $B_x$ intersects exactly one component of $G-S$, unless $x=y$ and $S=B_y$.
\end{lem}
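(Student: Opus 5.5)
We argue by contradiction: assuming $B_x$ meets two components of $G-S$, we build a proper refinement of $\DD$, which contradicts $\DD$ being refined. Two degenerate cases come first. If $B_x \subseteq S$, then $B_x$ meets no component, so the claim would also fail; but $B_x\subseteq S\subseteq B_y$ together with normality forces $x=y$ (a bag contained in another bag would let the tree path yield $B_x\subseteq B_{x'}$ for the neighbour $x'$ of $x$ on the path to $y$, contradicting normality), and then $S=B_y$, which is the excluded case. So assume $B_x\not\subseteq S$, and suppose $B_x$ meets at least two components of $G-S$. Let $C_1,\dots,C_m$ be the components of $G-S$, with $m\geq 2$.

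For each $i$, define a decomposition $\DD_i$ of $G_i:=G[V(C_i)\cup S]$ by taking a copy $T_i$ of $T$ and setting $B^i_z:=B_z\cap(V(C_i)\cup S)$ for each node $z$. This is a tree-decomposition of $G_i$, because restricting to a vertex subset preserves the axioms. Now glue: take the disjoint union of the $T_i$ and add a new node $w$ with bag $S$, joined to the copy $y_i$ of $y$ in each $T_i$. The resulting $\DD'$ is a tree-decomposition of $G$, for three reasons. Each edge of $G$ lies inside some $G_i$, since there are no edges between distinct components and edges inside $S$ lie in $G_1$. A vertex of $C_i$ appears only in $T_i$, where it occupies a subtree. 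A vertex $s\in S$ appears in a subtree of each $T_i$ containing $y_i$, because $s\in B_y$; these subtrees are connected through $w$.

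Every bag of $\DD'$ is a subset of a bag of $\DD$, since $S\subseteq B_y$. So $\DD'$ is a refinement of $\DD$, and it remains to show it is proper, meaning that $B_x$ is not contained in any bag of $\DD'$. The bag $S$ does not contain $B_x$, because $B_x\not\subseteq S$. A bag $B_z\cap(V(C_i)\cup S)$ contains no vertex of $C_j$ for $j\neq i$, while $B_x$ meets at least two components, so such a bag cannot contain $B_x$ either. Hence $\DD$ is not a refinement of $\DD'$, so $\DD'$ is a proper refinement of $\DD$. Normalising $\DD'$ only merges bags into supersets that are already bags, so the same conclusion holds for the normalised version. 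Strictly, the definition of refined only asks that no proper refinement exists, and $\DD'$ itself already is one, so this contradicts $\DD$ being refined.

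The main subtlety is the exceptional case. When $x=y$ and $S=B_y$, we have $B_x=S$, which meets no component at all, and the gluing construction is consistent with that case being excluded. The statement also asserts that $B_x$ intersects \emph{at least} one component, and this is exactly what the normality argument in the first paragraph provides.
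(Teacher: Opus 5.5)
Your proof is correct, and it uses the same splitting-and-regluing idea as the paper. The paper only cites this lemma from \citet{HW26}, but it runs the analogous construction itself in the claim inside the proof of \cref{lem:minsep}: restrict copies of $T$ to the pieces of $G$ on either side of a set contained in a bag, glue the copies at that bag's node, and note that no resulting bag contains $B_x$. You combine this with normality to rule out $B_x\subseteq S$. Your one copy per component joined through a hub node with bag $S$, rather than two copies joined directly at the copies of $y$, is only a cosmetic variant.
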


A \defn{separation} of a graph $G$ is an ordered pair $(A,B)$ such that $A,B\subseteq V(G)$ and $G=G[A]\cup G[B]$. The \defn{order} of $(A,B)$ is $|A\cap B|$. A separation $(A,B)$ of a graph $G$ \defn{breaks} a set $S\subseteq V(G)$ if $S\setminus A$ and $S\setminus B$ are both nonempty, and $A\cap B\subseteq S$. 
A set $S\subseteq V(G)$ is \defn{breakable} if there exists a separation $(A,B)$ that breaks $S$, otherwise $S$ is \defn{unbreakable}.

\begin{lem}[{\protect\citep[Lemma 8]{HW26}}] 
\label{RefinedUnbreakable}
Every bag of a refined tree-decomposition is unbreakable. 
\end{lem}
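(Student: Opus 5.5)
We argue by contradiction. Let $\DD=(B_x:x\in V(T))$ be a refined tree-decomposition of $G$, and suppose some bag $B_y$ is broken by a separation $(A,B)$ of $G$. That is, $S:=A\cap B\subseteq B_y$, and both $B_y\setminus A$ and $B_y\setminus B$ are nonempty. The plan is to use \cref{ComponentNormalisation} with $x=y$ and this $S$. We first check that the lemma applies, i.e.\ that we are not in the exceptional case $S=B_y$. Since $B_y\setminus A\neq\emptyset$, some vertex of $B_y$ lies outside $A$, hence outside $S$. So $S\subsetneq B_y$, and \cref{ComponentNormalisation} tells us that $B_y$ intersects exactly one component of $G-S$.

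Next we find two different components of $G-S$ that both meet $B_y$. Choose $u\in B_y\setminus A$ and $v\in B_y\setminus B$. Both $u$ and $v$ avoid $S$: we have $u\notin A\supseteq S$ and $v\notin B\supseteq S$. So each lies in some component of $G-S$.

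We claim $u$ and $v$ lie in different components. Since $u\notin A$ and $G=G[A]\cup G[B]$, we get $u\in B\setminus A$. Similarly $v\in A\setminus B$. Now take any path in $G-S$ starting at $u$, and let $w$ be its first vertex outside $B\setminus A$, with predecessor $w'\in B\setminus A$. The edge $w'w$ lies in $G[A]$ or $G[B]$. It cannot lie in $G[A]$, because $w'\notin A$. So $w\in B$. Since $w$ is not in $B\setminus A$, it follows that $w\in A\cap B=S$, which is impossible for a path in $G-S$. Hence every vertex reachable from $u$ in $G-S$ lies in $B\setminus A$, and so $v$ is not reachable from $u$. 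This is the only step that needs any checking, and it is the standard fact that $A\cap B$ separates $A\setminus B$ from $B\setminus A$.

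Therefore $B_y$ meets at least two components of $G-S$, contradicting \cref{ComponentNormalisation}. So every bag of a refined tree-decomposition is unbreakable. The only subtle point is excluding the case $S=B_y$, and the nonemptiness conditions in the definition of breaking take care of it.
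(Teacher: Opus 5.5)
Your proof is correct. Taking $S=A\cap B\subsetneq B_y$, the vertices of $B_y\setminus A$ and $B_y\setminus B$ lie in different components of $G-S$, which contradicts \cref{ComponentNormalisation}. The paper gives no proof here and simply cites \citet{HW26}, stating the lemma right after \cref{ComponentNormalisation}; deriving it directly from that lemma, as you do, appears to be the intended argument.
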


The remainder of this section presents new results about refined tree-decompositions.

\begin{lem}\label{cor:adhesionsets}
    Let $\mathcal{D}:=(B_x:x\in V(T))$ be a refined tree-decomposition of a graph $G$. For any $x\in V(T)$ and for any pair $v,w$ of non-adjacent vertices in $B_x$, there exists $y\in N_T(x)$ such that $\{v,w\}\subseteq B_x\cap B_y$.
\end{lem}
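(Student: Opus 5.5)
Suppose for contradiction that no neighbour $y$ of $x$ has $\{v,w\}\subseteq B_x\cap B_y$. The plan is to build a separation that breaks $B_x$, contradicting \cref{RefinedUnbreakable}.

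For each $y\in N_T(x)$, let $T_y$ be the component of $T-x$ containing $y$, and let $G_y:=\bigcup\{B_z:z\in V(T_y)\}$. By the standard properties of tree-decompositions, $G_y\cap B_x=B_x\cap B_y$. Moreover, every edge of $G$ lies within $B_x$ or within some $G_y$, and distinct $G_y,G_{y'}$ meet only inside $B_x\cap B_y\cap B_{y'}$.

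Now split the neighbours of $x$ into two groups. Let $Y_v$ be the set of $y\in N_T(x)$ with $v\in B_y$, and let $Y_w:=N_T(x)\setminus Y_v$. By assumption, no $y\in Y_v$ has $w\in B_y$. Define
\[A:=(B_x\setminus\{w\})\cup\bigcup_{y\in Y_v}G_y,\qquad B:=(B_x\setminus\{v\})\cup\bigcup_{y\in Y_w}G_y.\]
Then $A\cap B=B_x\setminus\{v,w\}$. To see this, note that $v\notin G_y$ for $y\in Y_w$ and $w\notin G_y$ for $y\in Y_v$, and any vertex outside $B_x$ lies in $G_y$ for exactly one $y$.

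Next we check that $(A,B)$ is a separation. Every vertex lies in $A\cup B$. For the edges, any edge inside some $G_y$ lies in $A$ or in $B$, and any edge inside $B_x$ other than $vw$ lies in $A$ or in $B$. Since $v,w$ are non-adjacent, $vw$ is not an edge, so nothing is missed.

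Finally, the separation breaks $B_x$. We have $A\cap B\subseteq B_x$, $w\in B_x\setminus A$ and $v\in B_x\setminus B$. This contradicts \cref{RefinedUnbreakable}.

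The main obstacle is the bookkeeping: confirming that $v$ and $w$ really do not leak into the wrong side. For $w\notin A$, if $w\in G_y$ with $y\in Y_v$, then $w\in B_x\cap B_y$ while also $v\in B_y$, contradicting the assumption. The symmetric check gives $v\notin B$. The argument also covers the case where $x$ has no neighbours, since then both unions are empty.
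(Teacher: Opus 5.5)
Your proof is correct, but it takes a different route from the paper.

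\textbf{The paper's route.} It applies \cref{ComponentNormalisation} with $x=y$ and $S=B_x\setminus\{v,w\}$. This gives a $v$--$w$ path whose interior is a non-empty connected subgraph of $G-B_x$. That interior lies in a single component of $T-x$. Since $v$ and $w$ both have neighbours on it, the neighbour $y$ of $x$ in that component has $\{v,w\}\subseteq B_y$.

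\textbf{Your route.} You argue by contradiction from \cref{RefinedUnbreakable}. You split the neighbours $y$ of $x$ according to whether $v\in B_y$. You then glue the corresponding branches into a separation with $A\cap B=B_x\setminus\{v,w\}$ that breaks $B_x$. Your checks all hold: $w\notin A$, $v\notin B$, the computation of $A\cap B$, and that every edge other than the non-edge $vw$ lies in $G[A]$ or $G[B]$.

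\textbf{Comparison.} Both arguments use only a property of the single bag $B_x$, and the two properties are interchangeable. A bag is unbreakable exactly when every pair of its vertices is joined by a path with no internal vertex in the bag, which is the path form the paper uses.

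The paper's version is shorter and constructive. It tells you which neighbour works: the one whose branch contains the interior of any such path. The same path-localisation step is reused in the proof of \cref{thm:refinedcharacterisation}.

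Your version avoids paths entirely and works directly with the separation form of the property. The cost is somewhat more bookkeeping, and it does not identify the neighbour $y$ explicitly.
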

\begin{proof}
    Let $S:=B_x\setminus \{v,w\}$.
    By \cref{ComponentNormalisation} applied with $x = y$, there is a component $C$ of $G-S$ containing both $v$ and $w$.
    Let $P$ be a path from $v$ to $w$ in $C$, and let $P'=P-\{v,w\}$. Since $v$ and $w$ are non-adjacent, $V(P')$ is non-empty.
    Furthermore, $P'$ is connected and is vertex disjoint from $B_x$.
    It follows that there is a component $T'$ of $T-x$ that contains all nodes $y\in V(T)$ such that $B_y$ intersects $V(P')$.
    Let $y$ be the neighbour of $x$ in $V(T')$.
    Since $v$ and $w$ both have a neighbour in $V(P')$, we must have $\{v,w\}\subseteq B_y$, as required. 
\end{proof}

Note that a very similar lemma to \cref{cor:adhesionsets} in the more restrictive setting of atomic tree-decompositions was proved by \citet[Lemma~3.5]{Muller12} (see \citep{DM18} for the final published version).

\begin{thm}\label{thm:refinedcharacterisation}
A normal tree-decomposition $\mathcal{D}=(B_x:x\in V(T))$ of a graph $G$ is refined if and only if for every $x\in V(T)$ and every $v,w\in B_x$ there is a path from $v$ to $w$ with no internal vertex in $B_x$.
\end{thm}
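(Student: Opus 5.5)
The forward direction follows from \cref{ComponentNormalisation}, in the same way as the proof of \cref{cor:adhesionsets}. Let $\DD$ be refined, $x\in V(T)$ and $v,w\in B_x$. If $v=w$ or $vw\in E(G)$ there is nothing to prove. Otherwise put $S:=B_x\setminus\{v,w\}\neq B_x$. By \cref{ComponentNormalisation}, $B_x$ meets exactly one component of $G-S$, so $v$ and $w$ lie in a common component. Take a $v$--$w$ path in that component. Its internal vertices avoid $S$, and also avoid $v$ and $w$ since the path is simple, so no internal vertex lies in $B_x$.

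For the converse, let $\DD$ be normal with the path property, and suppose $\DD'=(B'_a:a\in V(T'))$ is a refinement of $\DD$. Since $\DD$ is normal, it suffices to show that $\DD$ is also a refinement of $\DD'$, i.e.\ that every $B_x$ lies in some bag of $\DD'$. Then $\DD'$ is not a proper refinement, and $\DD$ is refined.

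Fix $x$. By the Helly property for subtrees of $T'$, it is enough to show that for all $v,w\in B_x$ the subtrees $T'_v$ and $T'_w$ meet. Suppose they do not, and choose such a pair with a minimal $v$--$w$ path $P$ whose interior $I$ avoids $B_x$. Since $I$ is connected and disjoint from $B_x$, it lies in a single component of $G-B_x$. Take an edge $ab$ of $T'$ separating $T'_v$ from $T'_w$. Then $A:=B'_a\cap B'_b$ separates $v$ from $w$ in $G$, so $A$ contains a vertex of $I$. Since $\DD'$ refines $\DD$, we have $B'_a\subseteq B_z$ for some $z\in V(T)$, and $z\ne x$ because $B'_a$ contains a vertex outside $B_x$.

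Now I would project $\DD'$ back onto the decomposition. Choose the edge $ab$ on the $T'_v$--$T'_w$ path closest to $T'_v$, and consider the side of $T'$ containing $T'_w$. This gives a separation $(X,Y)$ of $G$ with $v\in X\setminus Y$, $w\in Y\setminus X$ and $X\cap Y=A\subseteq B_z$. Intersecting with the side of $T-x$ containing $z$, and using that $I$ and $A$ both live in that branch, I would try to build a new tree-decomposition. Replace $B_x$ by the two sets $B_x\cap X$ and $B_x\cap Y$, together with suitable parts of the branch at $z$. The result should still be a tree-decomposition of $G$, using the path property only through the choice of $P$.

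The main obstacle is exactly this step: turning the separation coming from $\DD'$ into a contradiction with the path property of $\DD$. What I expect is needed is that $A\cap B_x$, together with the vertices of $A$ in the branch at $z$, already separates $v$ from $w$ using only vertices outside $B_x$. That would contradict the existence of $P$ once we know $P$ cannot cross $A$ outside $B_x$. The delicate point is that different pairs may use overlapping paths, which is why I pick a single minimal counterexample pair rather than handling all pairs at once via a torso argument.
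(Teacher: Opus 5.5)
Your forward direction is correct and is the paper's argument: apply \cref{ComponentNormalisation} with $y=x$ and $S=B_x\setminus\{v,w\}$.

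The converse has a genuine gap. After setting up $A$ and $z$, you only describe an intended construction and flag the decisive step as an unresolved obstacle. The route you sketch also cannot succeed. You have already shown that $A$ contains a vertex of the interior $I$ of $P$, so $P$ \emph{does} cross $A$ outside $B_x$. Your hope of a contradiction ``once we know $P$ cannot cross $A$ outside $B_x$'' is therefore the negation of what you derived. A separator lying in a different bag $B_z$ simply cannot contradict the path property at the fixed bag $B_x$. Splitting $B_x$ into $B_x\cap X$ and $B_x\cap Y$ to build a new tree-decomposition is beside the point as well: you are assuming the path property and need to contradict it, not refinedness. The underlying problem is that you fix $x$ at the outset.

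The missing idea is to move the pair $\{v,w\}$ through $T$ towards $z$. The set $I$ is nonempty, connected and disjoint from $B_x$, so all bags meeting $I$ lie in one component of $T-x$. That component contains $z$, because $I$ meets $A\subseteq B_z$. Let $y$ be the neighbour of $x$ in that component. Since $v$ and $w$ each have a neighbour in $I$, the subtree property forces $\{v,w\}\subseteq B_y$, and $\dist_T(y,z)<\dist_T(x,z)$. Now apply the path property at $B_y$, not $B_x$, and repeat.

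To formalise this, choose $v,w$, a set $A\subseteq V(G)\setminus\{v,w\}$ separating $v$ from $w$, a node $x$ with $\{v,w\}\subseteq B_x$, and a node $z$ with $A\subseteq B_z$, so as to minimise $\dist_T(x,z)$. Your setup shows such a choice exists. If $x=z$, every $v$--$w$ path has an internal vertex in $A\subseteq B_x$, which contradicts the path property. If $x\neq z$, the step above gives $y$ with the same $v,w,A,z$ and smaller distance, which contradicts minimality.

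So the extremal choice must range over the bag containing $\{v,w\}$, using the path property at bags other than the one you started from. Minimising paths at a fixed $B_x$ gives no descent.
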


\begin{proof}
    First, suppose that $\mathcal{D}$ is refined, and consider an arbitrary $x\in V(T)$ and $v,w\in B_x$.
    By \cref{ComponentNormalisation} with $y=x$ and $S=B_x\setminus \{v,w\}$, $v$ and $w$ are in the same component of $G-S$, meaning there is a path from $v$ to $w$ with no internal vertex in $B_x$, as required.

    Now instead suppose that for every $x\in V(T)$ and every $v,w\in B_x$ there is a path from $v$ to $w$ with no internal vertex in $B_x$.
    Suppose for contradiction that $\mathcal{D}$ is not refined. 
    Let $\mathcal{D}'=(B'_x:x\in V(T'))$ be a proper refinement of $\mathcal{D}$, and let $x^*\in V(T)$ be such that $B_{x^*}$ is not a subset of $B'_y$ for any $y\in V(T')$.
    For each $v\in V(G)$, let $T'_v$ be the subtree of $T'$ induced by the set of nodes $x$ such that $v\in B'_x$.
    By Helly's Theorem, if $T'_v$ and $T'_w$ intersect for every $v$ and $w$ in $B_{x^*}$, then there is some $y^*$ in $\bigcap_{v\in B_{x^*}}V(T'_v)$. But this means $B_{x^*}\subseteq B'_{y^*}$, contradicting our assumption.
    Hence, there exist $v,w\in B_{x^*}$ such that there is no bag in $\mathcal{D}'$ containing both of them.
    In particular, $v$ and $w$ are non-adjacent.
    Let $P'$ be a minimal path from $V(T'_v)$ to $V(T'_w)$ in $T'$, and let $x'$ and $y'$ be a pair of adjacent vertices on this path, and let $S:=B'_{x'}\cap B'_{y'}$.
    Since $T'_v$ and $T'_w$ are disjoint, $S\cap \{v,w\}=\emptyset$, and by the properties of tree-decompositions $v$ and $w$ are in distinct components of $G-S$.
    Since $\mathcal{D}'$ is a refinement of $\mathcal{D}$, there is some $z\in V(T)$ such that $B_z\supseteq B'_{x'}\supseteq S$.
    Thus, there are vertices $v,w\in V(G)$ and a set $S\subseteq V(G)\setminus\{v,w\}$ such that $S$ separates $v$ from $w$ in $G$ and there are bags $B_{x^*}$ and $B_z$ of $\cal D$ containing $\{v,w\}$ and $S$ respectively.
    Subject to these conditions, let $v$, $w$, $S$, $x^*$ and $z$ be chosen to minimise $\dist_{T}(x^*,z)$.
    Note that every path from $v$ to $w$ has an internal vertex in $S$, so our assumption on $\DD$ implies that $x^*\neq z$.
    
    Let $Q$ be a path from $v$ to $w$ in $G$ that is internally disjoint from $B_{x^*}$, and let $Q'=Q-\{v,w\}$. Note that $V(Q')$ contains a vertex in $S$, and so is non-empty.
    Furthermore, $Q'$ is connected and is vertex disjoint from $B_{x^*}$.
    It follows that there is a component $T^*$ of $T-x^*$ that contains all nodes $y\in V(T)$ such that $B_y$ intersects $V(Q')$.
    In particular, this component contains $z$, since $V(Q')$ intersects $S$.
    Let $y$ be the neighbour of $x^*$ in $V(T^*)$.
    Since $v$ and $w$ both have a neighbour in $V(Q')$, we must have $\{v,w\}\subseteq B_y$. But $\dist_T(y,z)\leq \dist_T(x^*,z)-1$, contradicting the choice of $x^*$ and $z$.
    This contradiction completes the proof.   
\end{proof}


    


    

For a tree $T$ and edge $xy\in E(T)$, let \defn{$T_{x:y}$} and \defn{$T_{y:x}$} be the subtrees of $T-xy$ respectively containing $x$ and $y$. For a tree-decomposition $(B_x:x\in V(T))$ of a graph $G$ and for each edge $xy\in E(T)$, let $\mathdefn{G_{x:y}}:= G[ \bigcup\{ B_z\setminus B_y :z\in V(T_{x:y}) \}]$. 

For a graph $G$ and a set $S\subseteq V(G)$, let \defn{$G\llbracket S\rrbracket$} be the graph obtained from $G$ by contracting each edge of $G-S$.
The vertices in $V(G\llbracket S\rrbracket)\setminus S$ are called the \defn{external} vertices of $G\llbracket S\rrbracket$. There is one external vertex for each component of $G-S$. 

\begin{lem}\label{lem:torso}
    For any refined tree-decomposition $\mathcal{D}$ of a graph $G$, for each bag $B$ of $\mathcal{D}$, the graph $G\llbracket B\rrbracket$ has the following properties:
        \begin{enumerate}[(1)]
            \item\label{torso1} for every pair of non-adjacent vertices $v$ and $w$ in $B$, there is an external vertex of $G\llbracket B\rrbracket$ adjacent to $v$ and $w$, 
            \item\label{torso2} no external vertex of $G\llbracket B\rrbracket$ is adjacent to every vertex in $B$, and
            \item\label{torso3} for every external vertex $x$ of $G\llbracket B\rrbracket$, the graph $G[N_{G\llbracket B\rrbracket}(x)]+\overline{K_2}$ is a minor of $G\llbracket B\rrbracket$.
        \end{enumerate}
\end{lem}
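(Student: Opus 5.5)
For \ref{torso1}, I would apply \cref{thm:refinedcharacterisation} directly. Given non-adjacent $v,w\in B$, it gives a path $P$ from $v$ to $w$ with no internal vertex in $B$. Since $vw\notin E(G)$, the interior $P-\{v,w\}$ is non-empty. It is connected and disjoint from $B$, so it lies inside a single component $C$ of $G-B$. In $G\llbracket B\rrbracket$ the external vertex corresponding to $C$ is then adjacent to both $v$ and $w$.

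For \ref{torso2}, let $B=B_x$ and suppose the external vertex of some component $C$ of $G-B$ is adjacent to every vertex of $B$. That is, every vertex of $B$ has a neighbour in $C$. Since $V(C)$ is non-empty, connected and disjoint from $B_x$, all nodes whose bags meet $V(C)$ lie in a single component $T'$ of $T-x$. Let $y$ be the neighbour of $x$ in $T'$. Each $b\in B_x$ has a neighbour $c\in V(C)$, and the edge $bc$ lies in some bag, which must be in $T'$. So the subtree of $b$ reaches from $x$ into $T'$, and hence $b\in B_y$. Thus $B_x\subseteq B_y$, contradicting normality (refined tree-decompositions are normal by definition).

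For \ref{torso3}, fix an external vertex $x_C$ corresponding to a component $C$ of $G-B$, and let $S:=N_{G\llbracket B\rrbracket}(x_C)=N_G(V(C))\subseteq B$. By \ref{torso2}, $S\neq B$. Applying \cref{ComponentNormalisation} with $y$ the node of $B$ and the subset $S\subsetneq B$, the bag $B$ meets exactly one component $D$ of $G-S$. Because $N_G(V(C))=S$, the set $V(C)$ is itself a component of $G-S$, and it is disjoint from $B$, so $D\neq C$. I would realise $D$ inside $G\llbracket B\rrbracket$ as the set $D'$ consisting of $B\setminus S$ together with the external vertices of components of $G-B$ contained in $D$. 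This set $D'$ is connected in $G\llbracket B\rrbracket - S - x_C$, because contraction preserves connectivity. The model of $G[S]+\overline{K_2}$ is then the vertices of $S$ as singletons, together with the two branch-sets $\{x_C\}$ and $D'$. These two branch-sets are disjoint, are non-adjacent, and $x_C$ is adjacent to all of $S$.

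The main point to check is that every $s\in S$ has a neighbour in $D'$. Pick any $b\in B\setminus S$, which is non-empty. By \cref{thm:refinedcharacterisation} there is a path from $s$ to $b$ with no internal vertex in $B$. Its interior cannot enter $C$, since then it could only leave $C$ through a vertex of $S\subseteq B$, and it cannot end in $C$ since $b\notin C$. So either $sb$ is an edge, or the interior lies in a component of $G-B$ other than $C$ that is adjacent to $b$, hence contained in $D$. In either case $s$ is adjacent in $G\llbracket B\rrbracket$ to a vertex of $D'$. Finally, the edges of $G[S]$ are retained in $G\llbracket B\rrbracket$, so contracting $D'$ to a single vertex and deleting everything outside $S\cup\{x_C\}\cup D'$ yields $G[S]+\overline{K_2}$ as a minor.
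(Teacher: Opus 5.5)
Your proof is correct, and parts (1) and (2) follow the paper's argument essentially verbatim.

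For (3) you take a somewhat heavier route than the paper. The paper picks a single vertex $v\in B\setminus N_{G\llbracket B\rrbracket}(x)$, which exists by (2). By (1), every non-neighbour $w$ of $v$ in $N_{G\llbracket B\rrbracket}(x)$ has a common external neighbour $x_w$ with $v$, and $x_w\neq x$ because $x$ is not adjacent to $v$. The paper then contracts all external neighbours of $v$ into $v$, so the second apex branch set is just the star formed by $v$ and its external neighbours.

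You instead invoke \cref{ComponentNormalisation} to take the whole component $D$ of $G-S$ containing $B\setminus S$ as that branch set. You then check adjacency with a path argument from \cref{thm:refinedcharacterisation}. That argument amounts to re-deriving (1) and checking that the common external neighbour is not $x_C$.

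Your adjacency argument fixes one vertex $b$ and only uses $b$ and external vertices adjacent to $b$. So the star around $b$ is already a connected branch set, and the appeal to \cref{ComponentNormalisation} (and the connectivity of $D'$) is unnecessary. The paper obtains (3) purely from (1) and (2). Your version produces a larger, equally valid branch set at the cost of an extra lemma.
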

\begin{proof}
Say $\mathcal{D}=(B_t:t\in V(T))$. Consider a pair of non-adjacent vertices $v$ and $w$ in $B$. By \cref{thm:refinedcharacterisation}, there is a path from $v$ to $w$ in $G$ with no internal vertex in $B$. The internal vertices of this path are contracted into a single vertex of $G\llbracket B\rrbracket$, which proves~\ref{torso1}.

    Consider an arbitrary external vertex $x$. Let $C$ be the component of $G-B$ corresponding to $x$, and let $t\in V(T)$ satisfy $B=B_t$. Since $C$ is connected and disjoint from $B_t$, there is a component $T'$ of $T-t$ such that $\{y\in V(T)\colon B_y\cap V(C)\neq \emptyset\}\subseteq V(T')$. Let $t'$ be the unique neighbour of $t$ in $T'$. By the properties of a tree-decomposition, every vertex in $B_t$ having a neighbour in $C$ belongs to $B_{t'}$. Since $\mathcal{D}$ is normal, $B_t \not \subseteq B_{t'}$. Hence some vertex of $B$ has no neighbour in $C$, and therefore $x$ is not adjacent to every vertex in $B$.

    To prove \ref{torso3}, consider an arbitrary external vertex $x$ and note that by \ref{torso2} there is a vertex $v$ in $B\setminus N_{G\llbracket B\rrbracket}(x)$.
    For every non-neighbour $w$ of $v$ in $N_{G\llbracket B\rrbracket}(x)$, there is an external vertex $x_w$ adjacent to both $v$ and $w$ by \ref{torso1}.
    Thus we find $G[N_{G\llbracket B\rrbracket}(x)]+\overline{K_2}$ as a minor of $G\llbracket B\rrbracket$ by contracting every external vertex adjacent to $v$ into $v$ and then deleting all vertices not in $\{x,v\}\cup N_{G\llbracket B\rrbracket}(x)$. Since $G\llbracket B\rrbracket$ is a minor of $G$, the result follows.
\end{proof}

\section{Planar Graphs}
\label{Planar}

This section proves \cref{PlanarOptimalPW3bags} showing that every planar graph has an optimal tree-decomposition with bags of pathwidth 3. We start by recounting the proof of \citet{HW26} showing that every planar graph has an optimal tree-decomposition with bags of treewidth 3, which employs the following definitions introduced by \citet{DF21}. A planar graph $G$ is \defn{separable} with respect to a plane embedding $\Pi$ of $G$ if there is a cycle $C$ in $G$ such that there is a vertex of $G-V(C)$ in the interior of $C$ with respect to $\Pi$, and there is a vertex of $G-V(C)$ in the exterior of $C$ with respect to $\Pi$. Otherwise, $G$ is \defn{non-separable} with respect to $\Pi$. That is, for any cycle $C$ in $G$, all the vertices of $G-V(C)$ are in the interior of $C$, or all the vertices of $G-V(C)$ are in the exterior of $C$. 

Let $\Pi$ be a plane embedding of a planar graph $G$. Suppose that $S\subseteq V(G)$ and $G[S]$ is separable with respect to the plane embedding of $G[S]$ induced by $\Pi$. So there is a cycle $C$ in $G[S]$ with at least one vertex of $S$ in the interior of $C$, and at least one vertex of $S$ in the exterior of $C$. Let $A$ be the set of all vertices of $G$ in $C$ or in the interior of $C$. Let $B$ be the set of all vertices of $G$ in $C$ or in the exterior of $C$. By the Jordan Curve Theorem, $(A,B)$ is a separation of $G$ that breaks $S$. So if a set $S\subseteq V(G)$ is unbreakable, then $S$ is non-separable with respect to $\Pi$. \cref{RefinedUnbreakable} thus implies:

\begin{lem}[{\protect\citep[Lemma 13]{HW26}}]
\label{PlanarNonSep}
Let $\Pi$ be a plane embedding of a planar graph $G$. For every bag $B$ of every refined tree-decomposition of $G$, $G[B]$ is non-separable with respect to the embedding of $G[B]$ induced by $\Pi$.
\end{lem}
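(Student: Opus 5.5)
The statement follows from \cref{RefinedUnbreakable} together with the Jordan-curve argument given just before it. I will argue by contraposition: if $G[B]$ is separable with respect to the embedding induced by $\Pi$, then $B$ is breakable.

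Fix a refined tree-decomposition $\DD$ of $G$ and a bag $B$ of $\DD$. Suppose for contradiction that $G[B]$ is separable with respect to the embedding of $G[B]$ induced by $\Pi$. Then there is a cycle $C$ in $G[B]$ with a vertex $a\in B\setminus V(C)$ in the interior of $C$ and a vertex $b\in B\setminus V(C)$ in the exterior of $C$. Since $\Pi$ restricted to $G[B]$ is the induced embedding, $C$ is also a cycle of $G$, drawn by $\Pi$ as the same Jordan curve. Let $A$ be the set of vertices of $G$ on $C$ or in the interior of $C$, and let $A'$ be the set of vertices on $C$ or in its exterior.

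Next I check that $(A,A')$ is a separation of $G$. Every vertex lies in $A\cup A'$. For an edge $uv$ of $G$, its drawing is a curve that meets the curve of $C$ only at vertices of $C$, because $\Pi$ has no crossings. So if neither endpoint is on $C$, then by the Jordan Curve Theorem both endpoints lie on the same side of $C$. If some endpoint is on $C$, that endpoint lies in $A\cap A'$. In all cases $uv$ lies inside $G[A]$ or inside $G[A']$. Hence $G=G[A]\cup G[A']$, and $A\cap A'=V(C)\subseteq B$.

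Finally, $b\in B\setminus A$ and $a\in B\setminus A'$, so both sets are nonempty. Thus $(A,A')$ breaks $B$, which contradicts \cref{RefinedUnbreakable}.

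I expect no real obstacle. The only point needing care is the topological claim that an edge cannot cross $C$, which follows directly from the embedding having no crossings together with the Jordan Curve Theorem.
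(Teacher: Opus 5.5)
Your proof is correct and is the paper's argument: the paper also takes a cycle $C$ of $G[B]$ witnessing separability, uses the vertices on-or-inside and on-or-outside $C$ as the two sides, invokes the Jordan Curve Theorem to get a separation breaking $B$, and concludes via \cref{RefinedUnbreakable}. The only difference is that you spell out the edge-by-edge check that $G=G[A]\cup G[A']$, which the paper leaves implicit.
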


A planar graph $G$ is \defn{non-separable} if $G$ is non-separable with respect to some plane embedding of $G$. \citet{DF21} showed that the class of non-separable planar graphs is minor-closed, and that a graph $G$ is non-separable planar if and only if $G$ does not contain $K_1 \cup K_4$ or $K_1 \cup K_{2,3}$ or $K_{1,1,3}$ as a minor. In fact, \citet{DF21} provided the following precise structural characterisation: any non-separable planar graph is either outerplanar, or a subgraph of a \defn{wheel} (a graph obtained from a cycle by adding a universal vertex), or a subgraph of an \defn{elongated triangular prism} (a graph obtained from the triangular prism $K_3\square K_2$ by subdividing edges that are not in triangles any number of times). 

\begin{figure}[!ht]
(a) \hspace*{-4mm} \includegraphics{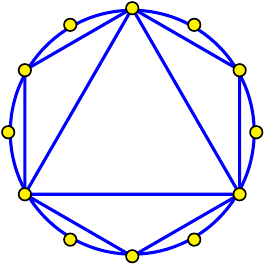}
\quad 
(b) \hspace*{-4mm}\includegraphics{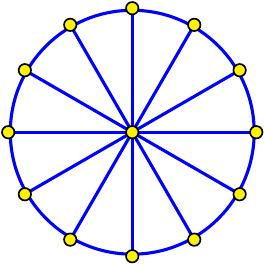}
\quad 
(c) \hspace*{1mm}\includegraphics{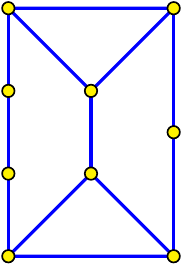}
\caption{Non-separable planar graphs: (a) outerplanar, (b) wheel, (c) elongated triangular prism.}
\end{figure}

\cref{PlanarNonSep} leads to a proof of the following result. 

\begin{lem}[{\protect\citep[Lemma 14]{HW26}}] 
\label{PlanarDetailed}
    For every bag $B$ of every refined tree-decomposition of a planar graph $G$, $G[B]$ is outerplanar, is a subgraph of a wheel, or is a subgraph of an elongated triangular prism.
\end{lem}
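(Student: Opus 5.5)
The plan is to combine \cref{PlanarNonSep} with the structural characterisation of non-separable planar graphs due to \citet{DF21}. Let $B$ be a bag of a refined tree-decomposition of $G$, and fix a plane embedding $\Pi$ of $G$. By \cref{PlanarNonSep}, $G[B]$ is non-separable with respect to the embedding of $G[B]$ induced by $\Pi$. In particular, $G[B]$ is a non-separable planar graph in the sense of \citet{DF21}, since it has a plane embedding with respect to which it is non-separable.

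The second step is to invoke the precise characterisation of \citet{DF21}. It says that every non-separable planar graph is either outerplanar, or a subgraph of a wheel, or a subgraph of an elongated triangular prism. Applying this to $G[B]$ gives the conclusion directly.

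The only point needing care is that the definitions match. The notion of non-separable used in \cref{PlanarNonSep} is non-separability with respect to one specific embedding. The characterisation of \citet{DF21} is stated for graphs that are non-separable with respect to \emph{some} embedding. Since existence of one such embedding is exactly what \cref{PlanarNonSep} supplies, this is immediate.

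If one preferred not to cite the structural theorem as a black box, the fallback route is the forbidden-minor characterisation. One would show directly that $G[B]$ contains none of $K_1\cup K_4$, $K_1\cup K_{2,3}$ or $K_{1,1,3}$ as a minor, and then derive the three structural cases. That derivation would be the main obstacle. With the \citet{DF21} characterisation available, however, the proof is a two-line combination, and I would present it that way.
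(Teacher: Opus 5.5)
Your proposal is correct and is essentially the paper's argument. The paper also obtains the result by combining \cref{PlanarNonSep} (so $G[B]$ is non-separable with respect to the induced embedding, hence a non-separable planar graph) with the structural characterisation of \citet{DF21}, which immediately gives the three cases.
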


Each graph listed in \cref{PlanarDetailed} has treewidth at most $3$, so \cref{PlanarDetailed} implies the following result.

\begin{cor}[{\protect\citep[Corollary 15]{HW26}}] 
    Every planar graph $G$ has an optimal tree-decomposition $(B_x:x\in V(T))$ such that for each $x\in V(T)$, the subgraph $G[B_x]$ has treewidth at most $3$. 
\end{cor}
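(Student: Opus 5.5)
The plan is to take a refined tree-decomposition of $G$ that is also optimal, and then apply \cref{PlanarDetailed} bag by bag. By \cref{lem:atomic} every graph has an atomic tree-decomposition. Such a decomposition minimises the fatness $N(\DD)$ lexicographically, so it minimises the number of bags of size $|V(G)|$, then of size $|V(G)|-1$, and so on. In particular, an atomic decomposition has minimum possible maximum bag size. Indeed, if some decomposition had all bags of size at most $k$ while $\DD$ had a bag larger than $k$, the first nonzero coordinate of $N$ would witness a lexicographic improvement. Hence every atomic tree-decomposition is optimal, and by \cref{lem:refinement} it is refined.

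Next apply \cref{PlanarDetailed} to each bag $B$ of this decomposition: $G[B]$ is outerplanar, a subgraph of a wheel, or a subgraph of an elongated triangular prism. Each of these graphs has treewidth at most $3$, and the corollary follows immediately:
\begin{itemize}
\item an outerplanar graph has treewidth at most $2$;
\item a wheel is a cycle plus an apex, and cycles have treewidth $2$, so a wheel has treewidth at most $3$;
\item an elongated triangular prism has treewidth at most $3$.
\end{itemize}

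The only point needing care is the last bound. Adding a vertex of one triangle to every bag of a width-$2$ decomposition of the remainder would give width $3$, but that remainder need not have treewidth $2$, so I would argue directly. An elongated prism consists of two triangles $a_1a_2a_3$ and $b_1b_2b_3$ joined by three paths $P_i$ from $a_i$ to $b_i$. Deleting $a_1$ and $b_1$ leaves the edges $a_2a_3$ and $b_2b_3$, the paths $P_2$ and $P_3$, and the interior of $P_1$. The paths $P_2$ and $P_3$ with the two edges form a cycle, and the interior of $P_1$ is a separate path component, so the remainder has treewidth at most $2$. Adding $a_1,b_1$ to every bag would give width $4$, which is too much.

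Instead I will give an explicit path-decomposition of width $3$ that sweeps along the three paths in parallel. Start with the bag $\{a_1,a_2,a_3,b_1\}$. Then advance along $P_1$ while keeping $a_2,a_3,b_1$ fixed, then along $P_2$, then along $P_3$. Each step uses bags of the form $\{u,u',\text{two other fixed vertices}\}$, where $u,u'$ are consecutive on the current path, so every bag has size $4$. This gives width at most $3$, and treewidth is monotone under subgraphs. I expect this explicit verification to be the only (minor) obstacle; everything else is direct citation.
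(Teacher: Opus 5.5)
Your argument follows the paper's route. An atomic tree-decomposition is optimal (your lexicographic argument is correct) and refined by \cref{lem:refinement}, so \cref{PlanarDetailed} applies to every bag, and each of its three outcomes has treewidth at most $3$.

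The one step that fails as written is your check for the elongated prism $H$. First, your dismissal of the one-vertex idea is mistaken. $H-a_1$ is the cycle $a_2P_2b_2b_1b_3P_3a_3a_2$ with chord $b_2b_3$, plus a possibly empty pendant path at $b_1$. It is therefore outerplanar with treewidth at most $2$, so adding $a_1$ to every bag already gives width $3$. Second, the sweep you describe does not work. Write $P_1=p_0p_1\cdots p_s$. Keeping $a_2,a_3,b_1$ all fixed while advancing along $P_1$ produces bags $\{p_i,p_{i+1},a_2,a_3,b_1\}$ of size $5$ whenever $s\geq 2$, which contradicts your own ``two other fixed vertices''. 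The sweep does work if you keep $\{a_2,a_3\}$ fixed along $P_1$, then $\{a_3,b_1\}$ along $P_2$, then $\{b_1,b_2\}$ along $P_3$. Your starting bag $\{a_1,a_2,a_3,b_1\}$ is just the case $s=1$. The paper's own verification in \cref{lem:pw3graphs} is shorter still: for the non-adjacent pair $a_1,b_2$ of $K_3\square K_2$, the graph $H-\{a_1,b_2\}$ is a path, so $\pw(H)\leq 3$.
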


We now show that two of the three cases described in \cref{PlanarDetailed} correspond to graphs of pathwidth at most $3$.
\begin{lem}\label{lem:pw3graphs}
    If $H$ is a subgraph of a wheel or a subgraph of an elongated triangular prism, then $\pw(H)\leq 3$.
\end{lem}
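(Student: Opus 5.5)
Since pathwidth is monotone under taking subgraphs, it suffices to exhibit explicit path-decompositions of width at most $3$ for a wheel and for an elongated triangular prism. We use the standard fact that adding a vertex to every bag of a path-decomposition raises the width by exactly one, so $\pw(H+K_1)\leq \pw(H)+1$.

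For a wheel $W$ obtained from a cycle $C=c_1c_2\dots c_n$ by adding a universal vertex $u$, it suffices to show $\pw(C)\leq 2$. Take the bags $\{c_1,c_i,c_{i+1}\}$ for $i=2,\dots,n-1$, in this order. Every cycle edge $c_ic_{i+1}$ lies in a bag, and $c_1c_2$, $c_nc_1$ lie in the first and last bags. Each $c_i$ with $i\geq 2$ appears in at most two consecutive bags, and $c_1$ appears in all of them, so this is a path-decomposition of width $2$. Adding $u$ to every bag then gives a path-decomposition of $W$ of width $3$.

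For an elongated triangular prism, let the two triangles be $a_1a_2a_3$ and $b_1b_2b_3$, and let $P_j$ be the subdivided path from $a_j$ to $b_j$ for $j=1,2,3$. We sweep along the paths one at a time, using a path-decomposition of width $3$ (bags of size at most $4$). The first bag is $\{a_1,a_2,a_3,b_1\}$. Next, keep $a_2,a_3,b_1$ throughout the sweep of $P_1$: traverse $P_1=a_1p_1p_2\dots p_k b_1$ from the $a_1$ end, using bags $\{a_2,a_3,b_1\}\cup\{p_i,p_{i+1}\}$. These have size $5$, which is too many, so a slight reordering is needed: first take the bag $\{a_1,a_2,a_3,b_1\}$, then sweep $P_1$ with bags $\{a_2,a_3,b_1,p_i\}$ followed by $\{a_2,a_3,p_i,p_{i+1}\}$. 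This fails, because $b_1$ must persist.

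A cleaner approach is to delete $b_1$. The graph $H-b_1$ is a subgraph of a graph whose structure is a triangle $a_1a_2a_3$ with three pendant paths, where the paths from $a_2$ and $a_3$ are joined at the far end by the edge $b_2b_3$, and the path $P_1$ is hanging. Claim: $\pw(H-b_1)\leq 2$. To see this, first sweep $P_1$ from its far end with bags $\{a_2,a_3\}\cup\{\text{consecutive pair of } P_1\}$ reaching $a_1$, which covers the triangle and has width $3$. That is too large as well, so instead handle $P_1$ separately: $P_1$ is a path with $2$ anchors. Now add $b_1$ to every bag. This requires $\pw(H-b_1)\leq 2$, i.e.\ the subdivided structure consisting of the cycle formed by $a_2\to P_2\to b_2b_3\to P_3\to a_3a_2$ with $a_1$ adjacent to $a_2,a_3$ and the tail $P_1\setminus\{b_1\}$ hanging from $a_1$. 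The base cycle $D$ has pathwidth $2$ via the fan decomposition above with anchor $c_1:=a_2$: its bags are $\{a_2,x,y\}$ for consecutive $x,y$ along $D$. Insert the triangle vertex $a_1$ and the tail right at the end, where the bag is $\{a_2,c_{n-1},a_3\}$. Append the bags $\{a_2,a_3,a_1\}$, then $\{a_1,q_1\},\{q_1,q_2\},\dots$ along the tail. All bags have size at most $3$, and each vertex occupies a contiguous interval of bags. Adding $b_1$ to all bags yields width $3$.

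The step that needs the most care is this prism case: verifying that the vertex sets occupy contiguous intervals of bags and that every edge ($a_1a_2$, $a_1a_3$, $b_1b_2$, $b_1b_3$, and the end of $P_1$ at $b_1$) is covered. The edges at $b_1$ are covered automatically because $b_1$ lies in every bag. The degenerate case of unsubdivided paths also needs checking, but there the same construction works.
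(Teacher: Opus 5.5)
Your final construction is correct and is essentially the paper's argument. Your fan decomposition anchored at $c_1$ (resp.\ at $a_2$, after deleting $b_1$) is just a width-$1$ decomposition of the path $W-\{u,c_1\}$ (resp.\ $H-\{a_2,b_1\}$, with $a_2,b_1$ non-adjacent in $K_3\square K_2$) with the two deleted vertices added back, which is exactly the paper's proof; you should delete the abandoned attempts in the prism paragraph (the size-$5$ bags and the ``too large'' sweep), since only the last construction is needed.
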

\begin{proof}
    Since pathwidth is monotone with respect to the subgraph relation, it suffices to consider the case where $H$ is a wheel or an elongated triangular prism.

    If $H$ is a wheel with central vertex $v$, then for any $w\in V(H)\setminus \{v\}$ the graph $H-\{v,w\}$ is a path.
    Likewise, if $H$ is an elongated triangular prism obtained by subdividing $H':=K_3\square K_2$,  and $v$ and $w$ are non-adjacent vertices of $H'$, then $H-\{v,w\}$ is a path.

    In either case, we can obtain a path-decomposition for $H$ of width $3$ from an optimal path-decomposition of $H-\{v,w\}$ by adding $v$ and $w$ to every bag.
\end{proof}

In order to complete the proof of \cref{PlanarOptimalPW3bags}, it remains to exclude bags that induce outerplanar graphs of large pathwidth.
We achieve this in the next subsection by considering planar triangulations.

\subsection{Planar Triangulations}

A graph $G$ is a \defn{planar triangulation} if $G$ has an embedding in the plane in which each face is bounded by a 3-cycle. This section shows that planar triangulations have tree-decompositions in which each bag is 2-connected, in addition to the properties in \cref{PlanarDetailed}.  We use the following two folklore lemmas, where a \defn{minimal separator} of a connected graph $G$ is a set $S\subseteq V(G)$ such that $G-S$ is disconnected and $G-S'$ is connected for every proper subset $S'$ of $S$. 

\begin{lem}[{\protect\citep[Proposition~8.2.3]{MoharThom}}]\label{lem:separatingcycles}
Every minimal separator in a planar triangulation induces a cycle. 
\end{lem}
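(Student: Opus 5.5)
Let $G$ be a planar triangulation with a fixed plane embedding in which every face is bounded by a $3$-cycle, and let $S$ be a minimal separator of $G$. The plan is to build a closed walk in $G[S]$ from the face structure around one component of $G-S$, and then use minimality of $S$ to show this walk is a cycle. Since $G$ is a triangulation it is $3$-connected when $|V(G)|\geq 4$, so $|S|\geq 3$; small cases such as $G=K_3$ have no separator at all. Let $C_1,C_2$ be two components of $G-S$. Minimality of $S$ gives that every $s\in S$ has a neighbour in every component of $G-S$. Otherwise $S\setminus\{s\}$ would still separate: if some component $C_j$ has no neighbour at $s$, then $C_j$ and any other component remain in different components of $G-(S\setminus\{s\})$.

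Consider the component $C_1$ and contract it to a single vertex $c$. This keeps planarity, and $c$ is adjacent exactly to $N(C_1)=S$. Look at the rotation of edges around $c$, which corresponds to the boundary of the region formed by $C_1$ and its incident faces. Each face of $G$ incident to both $C_1$ and $S$ is a triangle. Going around $C_1$, consecutive edges from $C_1$ to $S$ bound triangular faces whose third side is either an edge of $G[S]$ or a further edge into $C_1$. Tracking the faces in cyclic order gives a closed walk $W$ in $G[S]$ through every vertex of $S$. Along the way, $G[S\cup V(C_1)]$ together with the triangular faces forms a disc whose boundary is $W$. An equivalent approach is to take the face of $G-V(C_1)$ containing $C_1$. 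Its boundary consists only of vertices adjacent to $C_1$, because every face is a triangle, so it lies inside $S$. Its boundary edges are edges of $G$ between vertices of $S$.

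Next, show that $W$ is a cycle. Suppose $W$ visits some vertex $s$ twice, or that the boundary of the face of $G-V(C_1)$ containing $C_1$ is not a single cycle. Then that face boundary has a cut-vertex $s$. In the plane, the other component $C_2$ lies in a region meeting the boundary only in one block through $s$. Vertices of $S$ on a different block can then be removed while $C_1$ and $C_2$ stay separated, or some vertex of $S$ is not adjacent to $C_2$. Either outcome contradicts minimality. So the boundary is a cycle $D$ containing all of $S$, and $V(D)=S$ because the boundary uses only vertices of $S$ and every vertex of $S$ touches $C_1$.

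Finally, show that $G[S]$ has no chords beyond $D$. A chord $uv$ of $D$ must be drawn inside or outside $D$. Inside $D$ (the side containing $C_1$) it would split that side into two regions, and $C_1$, being connected and disjoint from $S$, would lie in one of them. The vertices of $D$ strictly on the other side would then have no neighbour in $C_1$, a contradiction. The same argument applies on the outside with $C_2$, since $C_2$ lies in the exterior of $D$ and its neighbourhood is also all of $S$; by the same face argument, $S$ is the boundary of the face of $G-V(C_2)$ containing $C_2$. Hence $G[S]=D$ is a cycle.

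The main obstacle is the step showing that the face boundary is a genuine cycle rather than a walk with repeated vertices; this is where the triangular faces and minimality have to be combined carefully. Since this is a folklore result, citing \citep[Proposition~8.2.3]{MoharThom} is sufficient.
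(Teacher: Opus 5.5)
The paper gives no proof of this lemma. It is quoted from \citep[Proposition~8.2.3]{MoharThom}, so your closing remark that citing suffices is exactly the paper's treatment. Your self-contained sketch follows a sensible direct route, and several of its ingredients are right:
\begin{itemize}
\item minimality forces every $s\in S$ to have a neighbour in every component of $G-S$;
\item the boundary subgraph $B$ of the face $F$ of $G-V(C_1)$ containing $C_1$ lies in $G[S]$, because a face of $G$ inside $F$ with no vertex in $C_1$ would be bounded by a triangle of $G-V(C_1)$ and hence would itself be a face of $G-V(C_1)$ (this is where the triangulation hypothesis enters);
\item the final chord argument is correct.
\end{itemize}

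As a stand-alone proof, however, the sketch has a genuine gap exactly where you locate the difficulty. The sentence saying that $C_2$ ``lies in a region meeting the boundary only in one block through $s$'' is the heart of the lemma, and it is asserted rather than proved. Moreover, the relevant block need not pass through the repeated vertex $s$ at all.

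The missing ingredient is a standard plane-graph fact: any two distinct faces of a plane graph are separated by some cycle of that graph. This follows by induction on the number of edges. If deleting an edge $e$ merges the two faces, then $e$ lies on the frontier of both, hence on a cycle, and that cycle separates them. Otherwise, delete $e$ and apply induction.

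In your setting, $F$ is itself a face of $B$. The connected subgraph $C_2$ is disjoint from both $B$ and $F$, so it lies in a different face of $B$. Hence there is a cycle $D\subseteq B\subseteq G[S]$ with $C_1$ and $C_2$ on opposite sides. By the Jordan curve theorem $V(D)$ separates $G$, minimality gives $V(D)=S$, and your chord argument then yields $G[S]=D$.

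This route bypasses the cut-vertex and block case analysis entirely. It also makes the contraction-and-rotation picture unnecessary; that picture tacitly assumes the region formed by $C_1$ and its incident faces has a single boundary walk, which itself needs an argument.
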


\begin{lem}[{\protect\citep[Lemma~2.3.3]{MoharThom}}]
\label{lem:3conn}
Every planar triangulation with at least four vertices is $3$-connected. 
\end{lem}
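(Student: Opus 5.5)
The statement to prove is the folklore fact that every planar triangulation with at least four vertices is $3$-connected; it is quoted from \citep{MoharThom}. I would argue via \cref{lem:separatingcycles}.

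Let $G$ be a planar triangulation with $|V(G)|\geq 4$, fixed with an embedding in which every face is bounded by a $3$-cycle.

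\textbf{Connectivity and non-completeness.} First, $G$ is connected. Every face boundary is a $3$-cycle, so the boundary of the outer face is connected; each face boundary is contained in the component carrying it, so a second component would force some face to have a disconnected boundary.

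Second, $G$ is not complete. The only planar complete graph with at least four vertices is $K_4$, and $K_4$ is indeed a triangulation. However, it has only four vertices and cannot be disconnected by deleting vertices. Hence $K_4$ is $3$-connected and needs no further argument. So assume $G\neq K_4$. Then $G$ is not complete: a complete planar graph on at least five vertices would contain $K_5$.

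\textbf{Small separators are cycles.} Suppose for contradiction that some set $S$ with $|S|\leq 2$ has $G-S$ disconnected. Shrinking $S$, we may take $S$ to be a minimal separator. By \cref{lem:separatingcycles}, $G[S]$ is a cycle. But a cycle has at least three vertices, contradicting $|S|\leq 2$. Therefore no set of at most two vertices separates $G$, and since $|V(G)|\geq 4$, $G$ is $3$-connected.

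\textbf{The main obstacle.} The real content lies in \cref{lem:separatingcycles}. If one did not want to rely on it, a direct argument would go as follows. Take a separator $S$ with $|S|\leq 2$ and two components $C_1,C_2$ of $G-S$. Choose a face incident to a vertex of $C_1$ and to a vertex outside $C_1\cup S$. Such a face exists by walking around a vertex of $S$ adjacent to $C_1$ in the rotation system, or by using connectivity of the dual. Its boundary is a triangle whose three vertices include one vertex in $C_1$ and one outside $C_1\cup S$. These two vertices must be adjacent, which is impossible since $S$ separates them.

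Making the choice of that face precise is the main technical step. The planned approach is the local rotation argument: the neighbours of each vertex $s\in S$ appear in cyclic order, and consecutive neighbours in that order are adjacent because each face is a triangle. A rotation at $s$ that contains neighbours from both $C_1$ and $C_2$ must therefore pass between them through a vertex of $S\setminus\{s\}$. Since $|S\setminus\{s\}|\leq 1$, that single vertex would have to be crossed twice, which yields the contradiction (with care when $\deg(s)$ is small).
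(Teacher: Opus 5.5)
Your proof is correct, but it takes a different route from the paper, because the paper gives no proof of this lemma at all: it imports the statement from Mohar and Thomassen, just as it imports \cref{lem:separatingcycles}.

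Your main route derives one cited fact from the other. You show connectivity from the triangular face boundaries; then any $S$ with $|S|\le 2$ separating $G$ would contain an inclusion-minimal separator, which by \cref{lem:separatingcycles} induces a cycle and so has at least three vertices. That is valid within the paper's framework. The separate treatment of $K_4$ and the non-completeness remark are unnecessary, since the argument applies verbatim to $K_4$.

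The weakness of this route is that \cref{lem:separatingcycles} is the stronger statement, and its natural proof is essentially your rotation argument. So the reduction buys no real economy.

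The rotation sketch itself is sound and self-contained:
\begin{enumerate}
\item If $S$ is an inclusion-minimal separator, every $s\in S$ has a neighbour in every component of $G-S$, because $G-(S\setminus\{s\})$ is connected.
\item Consecutive neighbours in the rotation at $s$ are adjacent, because the faces are triangles.
\item Hence both open arcs of the rotation between a neighbour in $C_1$ and a neighbour in $C_2$ contain a vertex of $S\setminus\{s\}$.
\item These arcs are disjoint, and each neighbour occurs only once in the rotation of a simple graph. So $|S\setminus\{s\}|\ge 2$, contradicting $|S|\le 2$.
\end{enumerate}
The small-degree caveat is harmless. A vertex of degree $1$ cannot lie on a triangular face boundary, and for degree $2$ both faces at $s$ are the triangle $s a b$, so the neighbours $a,b$ are still adjacent.

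In short, your sketch gives a genuine first-principles proof using only the rotation system, whereas the paper trades this for a one-line citation of a standard fact.
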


\begin{lem}\label{lem:minsep}
For every refined tree-decomposition $(B_x:x\in V(T))$ of a $3$-connected planar graph $G$, for each edge $uv\in E(T)$, the set $B_u\cap B_v$ is a minimal separator of $G$.
\end{lem}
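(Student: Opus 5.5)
The plan is to set $S:=B_u\cap B_v$ and show three things: $G-S$ has a component $C_u$ containing $B_u\setminus S$ and a different component $C_v$ containing $B_v\setminus S$; every vertex of $S$ has a neighbour in both $C_u$ and $C_v$; and $G-S$ has no other components. Together these give minimality. Indeed, for any proper subset $S'\subsetneq S$, every vertex of $G-S'$ then lies in $C_u\cup C_v\cup(S\setminus S')$. Each vertex of the nonempty set $S\setminus S'$ is adjacent to both $C_u$ and $C_v$, so $G-S'$ is connected, while $G-S$ is disconnected.

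\emph{Step 1 (two components).} Since $\mathcal{D}$ is normal, $B_u\setminus S$ and $B_v\setminus S$ are both nonempty. By the standard separation property of tree-decompositions, $S$ separates $B_u\setminus S$ from $B_v\setminus S$. Apply \cref{ComponentNormalisation} with $x=u$, $y=v$ and $S\subseteq B_v$; the exception does not occur because $u\neq v$. So $B_u$ meets exactly one component of $G-S$, which we call $C_u$. Symmetrically, $B_v\setminus S$ lies in a single component $C_v$, and $C_u\neq C_v$ because $S$ separates them.

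\emph{Step 2 (fullness).} Take $s\in S$ and $w\in B_v\setminus S$. By \cref{thm:refinedcharacterisation} there is an $s$--$w$ path $P$ with no internal vertex in $B_v$, so in particular no internal vertex in $S$. Hence $P-s$ is a connected subgraph of $G-S$ containing $w$, so it lies in $C_v$, and the second vertex of $P$ is a neighbour of $s$ in $C_v$. The same argument applied to $B_u$ shows that $s$ has a neighbour in $C_u$.

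\emph{Step 3 (no other components).} This is the main step. Suppose $G-S$ has a third component $C$. Since $G$ is $3$-connected and $C$ is separated from $C_u$ by $N(C)\subseteq S$, we have $|N(C)|\geq 3$; pick distinct $a,b,c\in N(C)$. Contract each of $C_u$, $C_v$ and $C$ to a single vertex. By Step 2, the resulting vertices for $C_u$ and $C_v$ are adjacent to all of $S$, and by choice the vertex for $C$ is adjacent to $a,b,c$. This gives a $K_{3,3}$ minor with parts $\{C_u,C_v,C\}$ and $\{a,b,c\}$, contradicting planarity. So $G-S$ has exactly the two components $C_u$ and $C_v$, and the minimality argument in the first paragraph applies.

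I expect Step 3 to be the main obstacle. Inclusion-minimality fails if an extra component attaches only to part of $S$, and the refinement conditions alone do not obviously rule this out. The combination of $3$-connectivity and the $K_{3,3}$ argument is what closes this gap.
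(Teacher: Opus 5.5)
Your proof is correct, and its overall structure matches the paper's. Both proofs produce the two components $C_u,C_v$ via \cref{ComponentNormalisation}, show that every vertex of $B_u\cap B_v$ has neighbours in both, and rule out a third component by building a $K_{3,3}$ minor from $3$-connectivity.

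The genuine difference is in the fullness step. The paper argues from the definition of refined. If some $w\in B_u\cap B_v$ had no neighbour in $C_u$, it takes two copies of $\mathcal{D}$, restricted to $N[C_u]$ and to $V(G)\setminus V(C_u)$ respectively, and joins them at the copies of $v$. It then checks that this is a proper refinement, because no bag contains $B_u$. You instead use the forward direction of \cref{thm:refinedcharacterisation}, which gives an $s$--$w$ path internally disjoint from $B_v$, and hence a neighbour of $s$ in $C_v$ at once.

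Your route is shorter because it reuses a theorem already proved in that section. The paper's route is self-contained and shows concretely why a non-full adhesion vertex would let one split the decomposition.

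Two smaller differences also favour your version. You apply \cref{ComponentNormalisation} directly with $S=B_u\cap B_v$ rather than $S=B_v$, which avoids checking that a component of $G-B_v$ is also a component of $G-(B_u\cap B_v)$. You also spell out the final deduction of minimality, which the paper leaves implicit.
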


\begin{proof}
    By \cref{ComponentNormalisation}, there is a unique component $C_u$ of $G-B_v$ that intersects $B_u$ and a unique component $C_v$ of $G-B_u$ that intersects $B_v$.
    By the definition of tree-decomposition, $C_u$ and $C_v$ are components of $G-(B_u\cap B_v)$.
    \begin{claim}
        Every vertex in $B_u\cap B_v$ has a neighbour in $C_u$ and in $C_v$. 
    \end{claim}
    \begin{proofclaim}
        Suppose for contradiction that some $w\in B_u\cap B_v$ has no neighbour in $C_u$.
        Let $T_1$ and $T_2$ be disjoint copies of $T$, let $X_1:=N[C_u]$ and $X_2:=V(G)\setminus V(C_u)$. For each $x\in V(T)$ and $i\in \{1,2\}$, let $(x,i)$ be the copy of $x$ in $T_i$ and define $B'_{(x,i)}:=B_x\cap X_i$.
        Let $T'$ be obtained from the disjoint union of $T_1$ and $T_2$ by adding the edge $(v,1)(v,2)$.
        Note that $G=G[X_1]\cup G[X_2]$ and $X_1\cap X_2=N(C_u)\subseteq B_v\setminus \{w\}$, so $\mathcal{D}':=(B'_{(x,i)}:(x,i)\in V(T'))$ is a tree-decomposition of $G$.
        By construction, $\mathcal{D}'$ is a refinement of $(B_x:x\in V(T))$, and since $w\in B_u\setminus X_1$ and $\emptyset\neq (B_u\cap V(C_u))\subseteq B_u\setminus X_2$, no bag of $\mathcal{D}'$ contains $B_u$, so it is a proper refinement.
        This contradicts the fact that $(B_x:x\in V(T))$ is refined.
        Thus every vertex in $B_u\cap B_v$ has a neighbour in $C_u$, and by symmetry every vertex in $B_u\cap B_v$ has a neighbour in $C_v$.
    \end{proofclaim}
    Suppose for contradiction that there is a third component $C'$ of $G-(B_u\cap B_v)$ distinct from $C_u$ and $C_v$.
    Since $G$ is $3$-connected, there are at least three vertices $v_1,v_2,v_3$ in $N(C')$. 
    Now by contracting each of $C_u$, $C_v$ and $C'$ to a single vertex and deleting all other vertices except $v_1$, $v_2$ and $v_3$ and all edges in $G[\{v_1,v_2,v_3\}]$, we find a $K_{3,3}$ minor in $G$, contradicting that $G$ is planar.
    Thus $C_u$ and $C_v$ are the only components of $G-(B_u\cap B_v)$, which together with the above claim implies that $B_u\cap B_v$ is a minimal separator in $G$. 
\end{proof}

\begin{lem}
\label{PlanarTriang}
For every refined tree-decomposition $(B_x:x\in V(T))$ of a planar triangulation $G$, for each $x\in V(T)$, the subgraph $G[B_x]$ is $2$-connected. 
\end{lem}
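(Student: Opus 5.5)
The plan is to combine \cref{cor:adhesionsets} with \cref{lem:minsep,lem:separatingcycles}: every pair of non-adjacent vertices of $B_x$ lies in an adhesion set $B_x\cap B_y$, and in a triangulation every such adhesion set induces a cycle. A cycle through two vertices supplies two internally disjoint paths between them, and this rules out both disconnection and cut vertices in $G[B_x]$.

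\textbf{Small cases.} First I would dispose of degenerate situations.
\begin{itemize}
\item If $|V(G)|\leq 3$, then $G$ is a complete graph (a triangulation on three vertices is $K_3$). Every normal tree-decomposition of a complete graph has a single bag equal to $V(G)$, because the whole clique must lie in some bag and normality forbids proper sub-bags adjacent to it. So the statement is immediate, reading $2$-connected as applying to $K_3$.
\item If $T$ has a single node, then $B_x=V(G)$. Since $G$ is $3$-connected by \cref{lem:3conn}, $G[B_x]$ is $2$-connected.
\end{itemize}

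\textbf{Adhesion sets are induced cycles.} So assume $|V(G)|\geq 4$ and $|V(T)|\geq 2$; then $G$ is $3$-connected by \cref{lem:3conn}. For every edge $xy\in E(T)$, \cref{lem:minsep} shows that $B_x\cap B_y$ is a minimal separator of $G$. By \cref{lem:separatingcycles}, $G[B_x\cap B_y]$ is therefore an induced cycle, and in particular $|B_x\cap B_y|\geq 3$. By normality $B_x\not\subseteq B_y$, so $|B_x|\geq 4$, and hence $2$-connectivity of $G[B_x]$ is a meaningful condition.

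\textbf{Main argument.} Suppose for contradiction that $G[B_x]$ is not $2$-connected. Then there is a set $Z\subseteq B_x$ with $|Z|\leq 1$ such that $G[B_x]-Z$ is disconnected. Here $Z=\emptyset$ covers the disconnected case, and $Z=\{c\}$ with $c$ a cut vertex covers the other.

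Pick $a,b$ in different components of $G[B_x]-Z$. They are non-adjacent, so \cref{cor:adhesionsets} gives a neighbour $y$ of $x$ with $\{a,b\}\subseteq B_x\cap B_y$. By the previous step, $G[B_x\cap B_y]$ is a cycle $C$, and $C$ is a subgraph of $G[B_x]$. The cycle $C$ contains two internally disjoint $a$--$b$ paths, and at most one of them meets $Z$. So some $a$--$b$ path avoids $Z$ and lies in $G[B_x]-Z$, contradicting the choice of $a$ and $b$.

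\textbf{Where the difficulty lies.} The only real obstacle is getting the adhesion sets to induce cycles. That requires the $3$-connectivity hypothesis of \cref{lem:minsep}, which is why the small triangulations and the single-bag case must be separated out at the start. After that the argument is short.
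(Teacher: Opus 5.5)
Your proof is correct and follows essentially the same route as the paper: two vertices in different components of $G[B_x]-S$ (with $|S|\leq 1$) are forced into a common adhesion set $B_x\cap B_y$, which induces a cycle by \cref{lem:minsep,lem:separatingcycles}, and this gives a contradiction. The only difference is that you get the common adhesion set directly from \cref{cor:adhesionsets}, whereas the paper re-derives it inline from a path in $G-S$ supplied by the $3$-connectivity of $G$.
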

\begin{proof}
    The result is trivial if $G=K_3$. Now assume that $|V(G)|\geq 4$. By \cref{lem:3conn}, $G$ is $3$-connected, so we may assume $G\neq G[B_x]$, and so $x$ has some neighbour $x'$ in $T$.
    By \cref{lem:minsep}, $B_x\cap B_{x'}$ is a  separator, and so $|B_x|\geq |B_x\cap B_{x'}|\geq 3$.
    
    Suppose for contradiction that there is some $S\subseteq B_x$ with $|S|\leq 1$ such that $C_1$ and $C_2$ are distinct components of $G[B_x]-S$.
    Since $G$ is $3$-connected there is a path $P$ from $V(C_1)$ to $V(C_2)$ in $G-S$.
    Let $P'$ be a minimal subpath of $P$ with endpoints in distinct components of $G[B_x]-S$.
    By minimality, $P'$ has no internal vertex in $B_x$.
    In particular, $P'':=P'-(V(P')\cap B_x)$ is a non-empty connected subgraph of $G-B_x$.
    Thus there is a unique neighbour $w$ 
    of $x$ in $T$ such that $V(P'')\cap \bigcup\{B_z:z\in V(T_{w:x})\}$ is non-empty.
    Since each edge of $P'$ has an endpoint in $V(P'')\subseteq V(G_{w:x})$, we have $V(P')\subseteq \bigcup\{B_z:z\in V(T_{w:x})\}$.
    In particular, both endpoints of $P'$ are in $B_x\cap B_w$, and so $G[B_x\cap B_w]-S$ has at least two components.
    But by \cref{lem:separatingcycles,lem:minsep}, $G[B_x\cap B_w]$ is $2$-connected.
    This contradiction completes the proof.
\end{proof}

We now prove our main technical result for planar triangulations. 

\begin{thm}\label{thm:TriangDetailed}
    Let $G$ be a planar triangulation with $|V(G)|\geq 4$ and let $(B_x:x\in V(T))$ be a refined tree-decomposition of $G$. For each $x\in V(T)$ the graph $G[B_x]$ is isomorphic to one of the following:
    \begin{enumerate}[a)]
        \item a graph $H$ with a vertex $v$ of degree at least $3$ such that $H-v$ is a cycle,
        \item a subdivision of $K_{2,3}$,
        \item a subdivision of $K_4$ that contains a triangle, or
        \item an elongated triangular prism.
    \end{enumerate}
    In particular, $\pw(G[B_x])\leq 3$.
\end{thm}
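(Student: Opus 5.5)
We start from \cref{PlanarDetailed} and \cref{PlanarTriang}. Let $H:=G[B_x]$. Then $H$ is $2$-connected and is outerplanar, a subgraph of a wheel, or a subgraph of an elongated triangular prism. The first task is to rule out the situation where $H$ is a chordal-ish outerplanar graph, or more generally where $H$ has ``too many'' edges. For this we use \cref{cor:adhesionsets}: every non-adjacent pair $v,w\in B_x$ lies in some adhesion set $B_x\cap B_y$ with $y\in N_T(x)$. By \cref{lem:minsep,lem:separatingcycles}, each such adhesion set induces a cycle of $G$, hence an induced cycle of $H$. So every non-adjacent pair of vertices of $H$ lies on a common induced cycle $C_y$ of $H$, and this cycle is a proper subset of $B_x$ by normality. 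We also know (\cref{lem:torso}) that no external vertex sees all of $B_x$; in the planar setting the components of $G-B_x$ attach to these separating cycles.

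We next use the embedding. $H$ is non-separable (\cref{PlanarNonSep}), and every separating cycle $C_y$ bounds a region that contains no vertex of $H$ but does contain vertices of $G$. Since $G$ is a triangulation, every face of $H$ is either a triangle of $G$ or contains vertices of $G-B_x$. Such faces must be bounded by a cycle that is exactly some adhesion cycle. The reason is that the vertices inside a face form part of a component of $G-B_x$ whose neighbourhood is a minimal separator; by the claim in the proof of \cref{lem:minsep}, this neighbourhood equals $B_x\cap B_y$, which induces a cycle of length at least $3$. Hence \emph{every face of $H$ is a triangle or an induced cycle of $H$}, and every non-adjacent pair of vertices of $H$ lies together on a non-triangular induced face.

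With this structure in hand, we run the case analysis over the three classes of \cref{PlanarDetailed}.

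\begin{itemize}
\item \textbf{Outerplanar.} Here $H$ is $2$-connected with outer face a Hamilton cycle. If $H$ has a chord, consider two vertices on opposite sides of the chord. They are non-adjacent, or can be chosen so, and they lie on no common face. So $H$ must have no chords, and $H$ is a cycle. A cycle, however, would equal the adhesion set $B_x\cap B_y$, contradicting normality, unless both sides are faces. A cycle has exactly two faces, each of which would be an adhesion cycle equal to $B_x$, again contradicting normality. More carefully: if all of $B_x$ lies on one face then $B_x\subseteq B_y$. Hence the outerplanar case yields nothing beyond the wheel-like and $K_{2,3}$-like cases, which are themselves outerplanar-free except for the degenerate ones.
\item \textbf{Subgraphs of wheels and prisms.} We enumerate the $2$-connected such subgraphs in which every face is a triangle or an induced cycle, and in which every non-adjacent pair shares a face. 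For a wheel subgraph with hub $v$, $H-v$ must still be a cycle; otherwise two rim vertices would be separated. This gives (a). If the hub has degree $2$, the graph is a cycle, which is excluded. If the hub has low degree, we instead get a theta graph, giving (b), or a subdivided $K_4$ containing a triangle, giving (c). For the prism, deleting a subdivided edge gives a subdivided $K_4$ or $K_{2,3}$ configuration, and the full prism gives (d).
\end{itemize}

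Finally, the pathwidth bound follows from \cref{lem:pw3graphs} together with the fact that subdivisions of $K_{2,3}$ and of $K_4$ have pathwidth at most $3$. For these, deleting two branch vertices leaves a union of paths, and we add those two vertices to every bag.

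The main obstacle is the second step: rigorously showing that each non-triangular face of $H$ is exactly an adhesion cycle and contains a component of $G-B_x$. One must also check that a pair sharing no face cannot be covered by some adhesion cycle that is not a face boundary. This uses that an induced cycle of $H$ enclosing no vertex of $H$ is a face of $H$, which is why we invoke non-separability of $H$.
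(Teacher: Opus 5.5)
Your proof follows the paper's route. You use $2$-connectivity from \cref{PlanarTriang}. You observe that every non-adjacent pair in $B_x$ lies in an adhesion set $B_x\cap B_y$, which by \cref{lem:3conn,lem:minsep,lem:separatingcycles} induces a cycle, and use this to exclude outerplanar bags. You then classify the non-outerplanar $2$-connected subgraphs of wheels and elongated prisms given by \cref{PlanarDetailed}.

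The face analysis you single out as the main obstacle is not needed; the induced-cycle fact from your first paragraph already suffices.
\begin{itemize}
\item In the chord case, any cycle of $H$ through $v'$ and $w'$ passes through both $v$ and $w$, and then has $vw$ as a chord. So no adhesion set contains $\{v',w'\}$, contradicting \cref{cor:adhesionsets}.
\item In the chordless case, every adhesion set $B_x\cap B_{x'}$ induces a cycle inside the cycle $H$, and so equals $B_x$, which normality forbids. Hence $x$ has no neighbour in $T$, so $G=H$, and \cref{cor:adhesionsets} forces $G=K_3$, contradicting $|V(G)|\geq 4$.
\end{itemize}
This is exactly the paper's argument. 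It replaces your ``no common face'' step, which leans on the unproved face claim, and your muddled two-faces discussion. The conclusion you should state plainly is just that $H$ is not outerplanar.

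Some details of the enumeration need fixing.
\begin{itemize}
\item If the hub $v$ has degree $2$ and $H-v$ is the whole rim, then $H$ is a theta graph, not a cycle. It is a subdivision of $K_{2,3}$ when the two neighbours of $v$ are non-consecutive on the rim. Otherwise it is a cycle with a chord, hence outerplanar and already excluded.
\item For the prism, deleting an edge of one of the subdivided paths leaves, after discarding pendant parts, two triangles joined by two paths, which is outerplanar. The non-outerplanar proper subgraphs arise only from deleting triangle edges, and they fall under (c) or (b).
\item Your pathwidth justification overreaches. If every edge of a $K_4$-subdivision is subdivided, then deleting any two branch vertices leaves the other two with degree $3$, so the result is not a union of paths. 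In case (c) you must delete the branch vertex off the triangle together with one triangle vertex.
\end{itemize}
No separate pathwidth argument is needed anyway. $H$ is a subgraph of a wheel or of an elongated triangular prism, so \cref{lem:pw3graphs} applies directly, as in the paper.
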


\begin{figure}[!ht]
(a) \hspace*{-8mm} \includegraphics{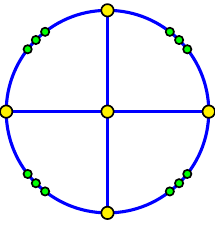}
\hspace*{-2mm}
(b) \hspace*{-5mm} \includegraphics{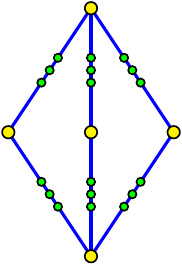}
\hspace*{-2mm}
(c) \hspace*{-6mm} \includegraphics{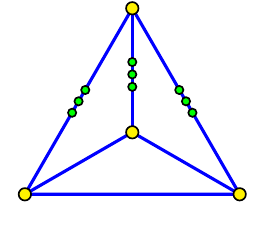}
\hspace*{-6mm}
(d) \hspace*{1mm}\includegraphics{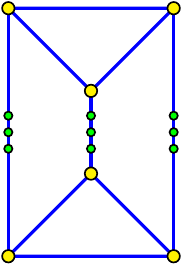}
\caption{Graphs in \cref{thm:TriangDetailed} (where green vertices are a possibly empty sequence of subdivision vertices).}\label{fig:characteriation}
\end{figure}

\begin{proof}
    By \cref{PlanarTriang}, $G':=G[B_x]$ is $2$-connected. 
    Suppose for contradiction that $G'$ is outerplanar. 
    If $G'$ has a chord $vw$, then there are vertices $v'$ and $w'$ in distinct components of $G'-\{v,w\}$. It follows that there is no induced cycle containing $v'$ and $w'$.
    By \cref{lem:separatingcycles,lem:3conn,lem:minsep}, there is no pair of adjacent vertices $x_1,x_2$ of $T$ such that $\{v',w'\}\subseteq B_{x_1}\cap B_{x_2}$.
    This contradicts \cref{cor:adhesionsets}.
    Thus $G[B_x]$ is a $2$-connected outerplanar graph with no chord; that is, a cycle. Since $(B_x:x\in V(T))$ is normal, there is no neighbour $x'$ of $x$ in $T$ with $B_x\cap B_{x'}=B_x$, and so by \cref{lem:separatingcycles,lem:3conn,lem:minsep} there is no neighbour of $x$ in $T$. Thus $V(T)=
    \{x\}$. Hence $V(G)=B_x$, and by \cref{cor:adhesionsets} there is no pair of non-adjacent vertices in $G$.
    It follows that $G$ is a $3$-vertex cycle, contradicting that $|V(G)|\geq 4$.
    Thus $G'$ is not outerplanar.
    
    Now by \cref{PlanarDetailed}, $G[B_x]$ is a non-outerplanar subgraph of either a wheel or an elongated triangular prism, and thus has pathwidth at most $3$ by \cref{lem:pw3graphs}.
    It is routine to verify that every non-outerplanar, $2$-connected subgraph of a wheel can be reduced to a cycle by deleting its central vertex, and so is covered by either case (a) or (b).
    Likewise, every proper subgraph of an elongated triangular prism that is non-outerplanar is covered by either case (b) or (c), and the elongated triangular prism itself is covered by case (d).
   \end{proof}

We remark that each outcome in \cref{thm:TriangDetailed} is necessary, in the sense that a graph $H$ is induced by a bag of some refined tree-decomposition of some planar triangulation $G$ if and only if $H$ is described by one of the four cases of \cref{thm:TriangDetailed}. 
To see this, let $H$ be such a graph and let $G$ be the plane triangulation obtained from $H$ by adding, for each non-clique face $F$ of $H$, a new vertex $v_F$ embedded in $F$ and adjacent to all vertices incident to $F$. Then $G$ has a tree-decomposition, indexed by a star with central vertex $x$ and a leaf $x_F$ for each face $F$ of $H$, such that $B_x:=V(H)$ and $B_{x_F}:=N_G[v_F]$ for each face $F$ of $H$.
Note that $G$ is isomorphic to $G\llbracket B_x\rrbracket$, $G$ is a planar triangulation and $G[B_x]= H$.
In each of the four cases, it is not difficult to verify that for each bag $B$ of this tree-decomposition and every pair $v,w$ of non-adjacent vertices in $B$, there is a path with endpoints $v$ and $w$ that is internally disjoint from $B$. Thus by \cref{thm:refinedcharacterisation}, this is a refined tree-decomposition of $G$.

\subsection{General Planar Graphs}

To extend \cref{thm:TriangDetailed} to general planar graphs, we call upon the following lemma of \citet{BV13}.

   \begin{lem}[{{\protect\citep[Lemma~1]{BV13}}}]
   \label{lem:twtriangulation}
       For every planar graph $G$ with $|V(G)|\geq 4$, there is a planar triangulation $G'$ such that $G$ is a spanning subgraph of $G'$ and $\tw(G')=\max\{3,\tw(G)\}$.
   \end{lem}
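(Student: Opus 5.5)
The plan is to take $k:=\max\{3,\tw(G)\}$ and let $G'$ be an edge-maximal planar graph on vertex set $V(G)$ that contains $G$ as a spanning subgraph and satisfies $\tw(G')\leq k$. Such a $G'$ exists because $G$ itself is a candidate. Since $\tw(G')\geq\tw(G)$, it suffices to show that $G'$ is a planar triangulation. Once that is done, \cref{lem:3conn} gives that $G'$ is $3$-connected, so $\tw(G')\geq 3$ and hence $\tw(G')=k$.

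To show that $G'$ is a triangulation, fix a plane embedding of $G'$ and an optimal tree-decomposition $(B_x:x\in V(T))$ of $G'$. For $v\in V(G')$ let $T_v$ be the subtree of $T$ induced by the nodes whose bags contain $v$. The key observation is that adding an edge $vw$ with $T_v\cap T_w\neq\emptyset$ does not increase the width, since the same decomposition remains valid. Therefore, by maximality, any two vertices that lie in a common bag and on a common face must already be adjacent. Otherwise we could add the edge inside that face, preserving planarity, and contradict maximality.

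We first reduce to the case where $G'$ is $2$-connected, so that every face is bounded by a cycle. If $G'$ is disconnected, pick vertices $u,u'$ in different components that lie on a common face. Adding $uu'$ creates no new cycle, so treewidth stays at most $\max\{k,1\}=k$, which contradicts maximality. If $G'$ has a cut vertex $c$, pick neighbours $u,u'$ of $c$ that are consecutive around $c$, lie in different blocks, and share a face. Take a tree-decomposition of $G'$ obtained by gluing decompositions of the pieces at $c$. Adding the bag $\{c,u,u'\}$ (size $3\leq k+1$), attached to a bag containing $c$, makes $uu'$ addable without increasing the width, again contradicting maximality.

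Now suppose some face $F$ is bounded by a cycle $C$ of length at least $4$. Let $a,b,c,d$ be four vertices of $C$ in cyclic order. By the key observation, if $a$ and $c$ share a bag then $ac$ is an edge of $G'$, and similarly for $b$ and $d$. We claim that one of these pairs does share a bag. Suppose $T_a\cap T_c=\emptyset$. Then some edge $xy$ of $T$ separates $T_a$ from $T_c$, and $B_x\cap B_y$ separates $a$ from $c$ in $G'$. Hence $B_x\cap B_y$ meets both arcs of $C$ between $a$ and $c$. Taking $b$ and $d$ to be those intersection points, they share the bag $B_x$. Therefore $bd$ is an edge, or $ac$ is.

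The step I expect to be the main obstacle is turning this into a contradiction, because both chords can sit outside $F$. The planarity argument I intend is as follows. The edges $ac$ and $bd$ cannot both exist: each would be embedded outside $F$, and the two pairs interleave on the facial cycle $C$, so the two edges would have to cross. So the chord that exists is, say, $ac$, and $b,d$ never share a bag. To finish, I will choose $a$ and $c$ at distance exactly $2$ on $C$, with middle vertex $b$; this is possible since $|C|\geq 4$. Then $ac$ must be present, since otherwise, by the argument above applied to $a$ and $c$, the vertices $b,d$ would lie in a common bag with $b$ forced to be the unique interior vertex of one arc. I then need to check that this contradicts planarity or the choice of $a,b,c,d$, by varying $d$ along the other arc and using that only one chord per interleaving pair can exist. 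If this pure rechoosing argument gets stuck, the fallback is to modify the decomposition locally. Since $a$ and $c$ lie in a common separating adhesion, the triangle $\{a,b,c\}$ can be accommodated by adding a new bag $\{a,b,c\}$ of size $3\leq k+1$ adjacent to a bag containing $a$ and $c$. This would allow a further chord to be added, contradicting maximality.
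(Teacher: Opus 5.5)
The paper does not prove this lemma; it is quoted from Biedl and Vel\'azquez, so your argument has to stand on its own.

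Most of your setup is sound: the edge-maximal $G'$, the observation that two non-adjacent vertices sharing a bag and a face give an addable edge, the separator argument producing a chord of a long facial cycle $C$, and the fact that two interleaving chords cannot both lie outside $F$. There is one small flaw in the cut-vertex step. A leaf bag $\{c,u,u'\}$ hung on an arbitrary bag containing $c$ disconnects the subtrees of $u$ and $u'$. Instead, splice the new bag between a bag containing $\{c,u\}$ in one side's decomposition and a bag containing $\{c,u'\}$ in the other side's. This is easily repaired.

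The genuine gap is the final contradiction: you do not reach it, and neither of your sketched routes can. The inference ``then $ac$ must be present'' does not follow. If $ac$ is absent you only get some chord $bd$, which is consistent with everything derived so far. More fundamentally, your argument never uses $k\geq 3$, since every bag you add has size at most $3$, and that cannot work. For $k=2$ and $G=C_4=v_1v_2v_3v_4$, the graph $C_4+v_1v_3$ is edge-maximal subject to being planar on $V(G)$ with treewidth at most $2$. Its face $v_1v_2v_3v_4$ has exactly one chord, $v_1v_3$, drawn outside it, which is precisely your stuck configuration. Your fallback bag $\{a,b,c\}$ also gains nothing, because $abc$ is already a triangle of $G'$ and so already lies in some bag.

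The missing step needs a bag of size $4$. Let $ac$ be a chord with $a,c$ non-consecutive on $C$, and let $P,Q$ be the two components of $C-\{a,c\}$.

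First, $P$ and $Q$ lie in different components of $G'-\{a,c\}$. Otherwise a $P$--$Q$ path avoiding $a,c$, together with the edge $ac$ and a new vertex placed in $F$ adjacent to all of $C$, would give a $K_5$ minor. So $G'=G_1\cup G_2$ with $V(G_1)\cap V(G_2)=\{a,c\}$, $P\subseteq G_1$ and $Q\subseteq G_2$.

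Next, the subtrees $T_a$, $T_c$ and $T_P:=\bigcup_{u\in V(P)}T_u$ pairwise intersect, because $ac\in E(G')$ and each of $a,c$ has a $C$-neighbour in $P$. By the Helly property, some bag contains $a$, $c$ and some $p\in V(P)$. Likewise, some bag contains $a$, $c$ and some $q\in V(Q)$.

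Now restrict the decomposition to $V(G_1)$ and to $V(G_2)$, take disjoint copies, and join those two bags through a new node with bag $\{a,c,p,q\}$. This gives a tree-decomposition of $G'+pq$ of width $\max\{k,3\}=k$. Since $pq\notin E(G')$ and $pq$ can be drawn inside $F$, this contradicts maximality. This is exactly where $k\geq 3$ is used.
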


We now prove the primary theorem from the introduction. 

\PlanarOptimalPW*

\begin{proof}
    If $G$ has a tree-decomposition with width at most 3, then every bag has at most four vertices and thus has pathwidth at most 3. Hence we may assume $\tw(G) \geq 4$. By \cref{lem:twtriangulation} there is a planar triangulation $G'$ such that $G$ is a spanning subgraph of $G'$ and $\tw(G')=\tw(G)$.
    Let $\DD$ be any refined optimal tree-decomposition of $G'$. Since $\tw(G')=\tw(G)$ and $G\subseteq G'$, $\DD$ is also an optimal tree-decomposition of $G$. By  \cref{thm:TriangDetailed}, each bag of $\DD$ has pathwidth at most $3$.
\end{proof}

We now prove a version of \cref{thm:TriangDetailed} for arbitrary planar graphs.

\begin{thm}\label{thm:planarfulldetail}
    Given a refined tree-decomposition $\mathcal{D}=(B_x:x\in V(T))$ of a planar graph $G$, there is a planar graph $G'$ such that $G$ is a spanning subgraph of $G'$, $\mathcal{D}$ is a (refined) tree-decomposition of $G'$, and for each bag $B\in \mathcal{D}$ the graph $G'[B]$ is isomorphic to one of the following:
    \begin{enumerate}[a)]
        \item a graph $H$ with a vertex $v$ of degree at least $3$ such that $H-v$ is a cycle,
        \item a subdivision of $K_{2,3}$,
        \item a subdivision of $K_4$ that contains a triangle,
        \item an elongated triangular prism, or
        \item a clique with at most three vertices.
    \end{enumerate}
    In particular, $\pw(G[B])\leq 3$.
\end{thm}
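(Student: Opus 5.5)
The plan is to triangulate $G$ while keeping $\mathcal{D}$ a tree-decomposition, and then apply \cref{thm:TriangDetailed}. We may assume $G$ is connected and $|V(G)|\geq 4$: components can be joined one at a time by an edge between two vertices sharing a bag, chosen so that $\mathcal{D}$ stays a tree-decomposition, and the small cases give outcome (e).

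First take $G'$ maximal subject to three conditions: $G'$ is planar, $G\subseteq G'$ is spanning, and every edge of $G'$ lies in some bag of $\mathcal{D}$. Then $\mathcal{D}$ is a tree-decomposition of $G'$. It remains refined, since by \cref{thm:refinedcharacterisation} refinement is witnessed by paths, and adding edges only adds paths. Normality is unaffected because the bags are unchanged.

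The main obstacle is to show that this maximal $G'$ is a triangulation, or else that every bag of $G'$ has size at most $3$. Fix an embedding of $G'$ and suppose some face $F$ has length at least $4$.
\begin{enumerate}
\item Use the refined structure to find two non-adjacent vertices $v,w$ on $F$ that lie in a common bag. One approach is to consider a bag meeting the boundary of $F$ in at least two non-adjacent vertices.
\item Alternatively, use \cref{lem:torso} applied to the separators given by the adhesion sets. Since each bag is unbreakable (\cref{RefinedUnbreakable}), vertices on a face that are ``separated'' must share a bag.
\item Concretely, take a shortest path in $T$ between the subtrees $T_v$ and $T_w$ of two vertices on $F$. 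The adhesion set on this path separates $v$ from $w$, and because $F$ is a face this separator must contain a vertex of $F$ on each side-arc.
\end{enumerate}
Adding the edge $vw$ inside $F$ keeps $G'$ planar and puts the edge inside a bag, contradicting maximality. I expect this step to need care and a possible induction on face length.

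Once $G'$ is a triangulation (with $|V(G')|\geq 4$), each bag falls under cases (a)–(d) by \cref{thm:TriangDetailed}. If triangulating fails because all bags are tiny, we are in case (e). Finally, pathwidth is subgraph-monotone, and each listed graph has pathwidth at most $3$ by \cref{lem:pw3graphs}, so $\pw(G[B])\leq 3$.
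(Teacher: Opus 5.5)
There is a genuine gap. The intermediate statement you aim for is false: the maximal $G'$ need not be a triangulation even when some bag has more than $3$ vertices. Your preliminary reduction to connected $G$ also fails.

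By \cref{ComponentNormalisation}, if $S$ is contained in some bag (including $S=\emptyset$), then every bag other than $S$ itself meets exactly one component of $G-S$. So no bag contains vertices of two components of $G$, and no edge inside a bag can join them. Likewise, every separation of order at most $2$ whose separator lies in a bag survives in $G'$. Hence, when $|V(G)|\geq 4$, $G'$ cannot be a triangulation by \cref{lem:3conn}.

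Concretely, glue two copies of $K_4$ along an edge $vw$ and take the bags $\{v,w,a,b\}$ and $\{v,w,c,d\}$. This decomposition is refined and both bags are cliques, so $G'=G$, which has $11<3\cdot 6-6$ edges. It has a $4$-face $v\,d\,w\,a$ whose two non-consecutive pairs are $\{a,d\}$, which share no bag, and $\{v,w\}$, which is already an edge drawn outside the face. Your face-filling step overlooks this second situation: a pair non-adjacent on the facial cycle may already be adjacent in $G'$, and then nothing can be added. Your step 3 only produces separator vertices on both arcs, which may again be adjacent elsewhere, so it does not close this case.

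The missing idea is to localise before triangulating. Let $E^*$ be the set of edges $xy\in E(T)$ with $|B_x\cap B_y|\leq 2$. For each component $C$ of $T-E^*$ let $G'_C:=G'[\bigcup\{B_x:x\in V(C)\}]$ and $\mathcal{D}_C:=(B_x:x\in V(C))$. One then shows the following.
\begin{itemize}
\item Every $2$-element adhesion $\{v,w\}$ across $E^*$ is an edge of $G'$, by maximality: $v$ and $w$ are joined by paths on both sides of the separation, so $vw$ can be added without destroying planarity.
\item $\mathcal{D}_C$ is a refined tree-decomposition of $G'_C$: reroute an induced witnessing path using these adhesion edges and apply \cref{thm:refinedcharacterisation}.
\item $G'_C$ is still edge-maximal, since pasting planar graphs along edges preserves planarity.
\item $G'_C$ is $2$-connected, unless $C$ is a single node whose bag is a clique on at most $3$ vertices, which gives outcome e).
\end{itemize}
Finally, each facial cycle $O$ of $G'_C$ is a triangle. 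Since $\tw(O)=2$, some bag contains two vertices $v,w$ non-consecutive on $O$. If $vw\notin E(G'_C)$, you add it inside the face, contradicting maximality. If $vw\in E(G'_C)$, then because $K_4$ is not outerplanar, the two arcs of $O-\{v,w\}$ lie in different components of $G'_C-\{v,w\}$. This forces an edge of $C$ whose adhesion is contained in $\{v,w\}$, contradicting the definition of $C$. Only then can \cref{thm:TriangDetailed} be applied, separately to each pair $(G'_C,\mathcal{D}_C)$.
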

\begin{proof}
    Let $G'$ be a supergraph of $G$ that is edge-maximal subject to the conditions that $V(G')=V(G)$, $G'$ is planar and $\mathcal{D}$ is a tree-decomposition of $G'$.
    Thus for any bag $B$ of $\mathcal{D}$ and any pair $v,w$ of non-adjacent vertices in $G'[B]$, the graph $G'+vw$ is non-planar.
    Let $E^*$ be the set of edges $xy\in E(T)$ such that $|B_x\cap B_y|\leq 2$, and let $\mathcal{C}$ be the set of components of $T-E^*$.
    For each $C\in \mathcal{C}$, let $G'_C:=G'[\bigcup\{B_x:x\in V(C)\}]$, and note that $\mathcal{D}_C:=(B_x:x\in V(C))$ is a tree-decomposition of $G'_C$.

    \begin{clms}\label{clm1}
        If $xy\in E^*$ and $B_x\cap B_y=\{v,w\}$, then $vw\in E(G')$.
    \end{clms}
    \begin{proofclaim}
    Suppose for contradiction that $vw\notin E(G')$, and consider the graphs $G'_1:=G'[\bigcup \{B_z:z\in V(T_{x:y})\}]$ and $G'_2:=G'[\bigcup \{B_z:z\in V(T_{y:x})\}]$.
    Since $\mathcal{D}$ is normal, there is a vertex $v'$ in $B_x\setminus B_y$. 
    By \cref{thm:refinedcharacterisation}, $G$ contains a path $Q_1$ from $v$ to $v'$ that is internally disjoint from $B_x$ and a path $Q_2$ from $v'$ to $w$ that is internally disjoint from $B_x$. Thus there is a path $Q$ from $v$ to $w$ in $Q_1\cup Q_2$, which by construction is a path in $G'_1$.
    By symmetry, there is also a path $P$ from $v$ to $w$ in $G'_2$.
    By assumption, $P$ has at least one internal vertex $u$.
    Now in the embedding of $G'_1$, the boundary of the face containing $u$ contains both $v$ and $w$, and $v$ and $w$ are in the same component of the boundary of this face since $v$ and $w$ are in the same component of $G'_2$.
    It follows that we can draw a new edge from $v$ to $w$ without breaking planarity, contradicting the edge maximality of $G'$. 
    \end{proofclaim}
    
    \begin{clms}\label{clm2}For every $C \in \mathcal{C}$, $\mathcal{D}_C$ is a refined tree-decomposition of $G'_C$.
    \end{clms}
    \begin{proofclaim}
    First observe that since $G'$ is a supergraph of $G$ and $\mathcal{D}$ is a refined tree-decomposition of $G$ and a tree-decomposition of $G'$, $\mathcal{D}$ is a refined tree-decomposition of $G'$.
    Consider an arbitrary bag $B$ of $\mathcal{D}_C$, and an arbitrary pair of vertices $v$ and $w$ in $B$. 
    By \cref{thm:refinedcharacterisation}, there is a path $P$ from $v$ to $w$ in $G'$ with no internal vertex in $B$.
    We may choose this path to be induced.
    Suppose for contradiction that $P$ is not a path in $G'_C$.
    Then there is an edge $xy\in E^*$ with $x$ in $C$ and $y$ not in $C$ such that $P$ contains a vertex $u$ in $\bigcup\{B_z\setminus B_y:z\in V(T_{y:x})\}$.
    Let $P_1$ be the subpath of $P$ from $v$ to $u$ and let $P_2$ be the subpath of $P$ from $u$ to $w$, and note that both $P_1$ and $P_2$ contain a vertex in $B_x\cap B_y$. 
    But now by \cref{clm1}, there is an edge from a vertex in $P_1-u$ to a vertex in $P_2-u$, contradicting that $P$ is induced.
    Thus there is a path in $G'_C$ from $v$ to $w$ that is internally disjoint from $B$.
    Now by \cref{thm:refinedcharacterisation}, $\mathcal{D}_C$ is a refined tree-decomposition of $G'_C$.
    \end{proofclaim}
    
    \begin{clms}\label{clm3}For every $C \in \mathcal{C}$, $G'_C$ is edge maximal subject to being a planar supergraph of $G[\bigcup\{B_x:x\in V(C)\}]$ with $\mathcal{D}_C$ as a tree-decomposition.
    \end{clms}
    \begin{proofclaim}
    For each $C\in \mathcal{C}$, let $G^*_{C}$ be a planar supergraph of $G'_{C}$ that is edge-maximal with respect to having $\mathcal{D}_C$ as a tree-decomposition.
    Then the graph $G^*:=\bigcup \{G^*_{C}:C\in \mathcal{C}\}$ can be constructed by iteratively pasting planar graphs together on edges. This operation (often called a $2$-sum) preserves planarity, and so $G^*$ is planar.
    By construction, $G^*$ is a planar supergraph of $G'$, and $\mathcal{D}$ is a tree-decomposition of $G^*$ since every edge of $G^*$ is contained in a bag of $\mathcal{D}$.
    Since $G'$ is edge maximal with this property, $G'=G^*$.
    Thus $G^*_C=G'_C$, which proves the claim. 
    \end{proofclaim}
    
    Fix $C \in \mathcal{C}$.
    If $|V(G'_C)|\leq 3$, then we must have $|V(C)|=1$, since otherwise there is an edge $xy\in E(C)$ and either $B_x=B_y$, which contradicts the assumption that $\mathcal{D}$ is refined, or $xy\in E^*$, which contradicts the definition of $C$.
    Thus there is unique vertex $x$ in $C$.
    Now \cref{clm2} together with \cref{thm:refinedcharacterisation} imply that $G'[B_x]=G'_C[B_x]$ is a clique.

    Now suppose that $|V(G'_C)|\geq 4$, let $(A,B)$ be a non-trivial separation of $G_C'$, and suppose for contradiction that $|A\cap B|\leq 1$.
    Since some bag of $\mathcal{D}_C$ contains $A\cap B$, \cref{thm:refinedcharacterisation} implies that there is no $x\in V(C)$ such that both $A\cap B_x$ and $B\cap B_x$ are non-empty.
    For each $x\in V(C)$, colour $x$ red if $A\cap B_x=\emptyset$, and blue otherwise.
    Since $(A,B)$ is non-trivial, there is at least one red and one blue vertex in $C$, and since $C$ is a tree there is some red vertex $x$ adjacent to some  blue vertex $y$. Then $B_x\cap B_y\subseteq A\cap B$, and so $xy\in E^*$, which contradicts the definition of $C$. Hence, $G'_C$ is $2$-connected.
    Fix an embedding $\phi$ of $G'_C$ in the plane, and note that since $G'_C$ is $2$-connected, the boundary of every face is a cycle. A cycle that forms the boundary of a face is \defn{facial cycle}.

    Suppose for contradiction that there is a face $F$ of $\phi$ whose facial cycle $O$ is not a triangle.
    Note that the restriction of $\mathcal{D}$ to $V(O)$ is a tree-decomposition of $O$, so some bag $B_x$ of $\mathcal{D}$ contains three vertices of $O$ since $O$ has treewidth $2$.
    Since $O$ is not a triangle, there are two vertices $v$ and $w$ in $B_x\cap V(O)$ which are not adjacent in $O$.
    Note that $vw\in E(G'_C)$, or else we could add $vw$ to $G'_C$ and draw this edge in $F$, thus maintaining planarity and contradicting \cref{clm3}.
    Note that $vw$ is drawn outside of $F$ in $\phi$ since $F$ is a face.
    Let $P$ and $Q$ be the two components of $O-\{v,w\}$, and suppose for contradiction that there is a path $P'$ in $G'_C-\{v,w\}$ from $V(P)$ to $V(Q)$. 
    Again, since $F$ is a face, $P'$ is drawn outside of $F$ in $\phi$.
    By contracting $P$ and $Q$ down to single vertices and contracting $P'$ down to an edge, we obtain a planar embedding of $K_4$ with all vertices incident to $F$. In other words, we obtained an outerplanar embedding of $K_4$, which is impossible.
    Thus $P$ and $Q$ are in separate components $X_P$ and $X_Q$ of $G'_C-\{v,w\}$.
    Let $A_1:=V(X_P)$ and $A_2:=V(G'_C)\setminus (A_1\cup \{v,w\})$.
    Since $\{v,w\}\subseteq B_x$ and by \cref{clm2}, there is no bag $B_y$ of $\mathcal{D}_C$ that intersects both $A_1$ and $A_2$.
    Colour each $y\in V(C)$ red if $B_y$ intersects $A_1$ and blue otherwise. There must be an adjacent red and blue vertex $x_1$ and $x_2$ in $C$, but $B_{x_1}\cap B_{x_2}\subseteq \{v,w\}$, so $x_1x_2\in E^*$ contradicting the definition of $C$.
    Thus $G'_C$ is a planar triangulation.
    Thus by \cref{thm:TriangDetailed}, for every $x\in V(C)$, the graph $G'[B_x]=G'_C[B_x]$ satisfies the theorem hypothesis.  
\end{proof}

\subsection{Many Bags}
\label{ManyBags}

This section proves \cref{PlanarOptimalManyBags}, showing that every planar graph has an optimal tree-decomposition in which the union of any set of $k$ bags has pathwidth $O(k)$. The following language will be helpful. Let $C_1$ and $C_2$ be distinct induced cycles in a plane graph $G$. We say $C_1$ is \defn{inside} $C_2$ if some edge of $C_1$ is in the interior of $C_2$, and $C_1$ is \defn{outside} $C_2$ if some edge of $C_1$ is in the exterior of $C_2$. We say $C_1$ \defn{crosses} $C_2$ if $C_1$ is both inside and outside $C_2$. If $C_1$ crosses $C_2$, then $C_2$ crosses $C_1$, so we just say that $C_1$ and $C_2$ \defn{cross}. Cycles $C_1,\dots,C_k$ in a planar graph $G$ are \defn{non-crossing} if $C_i$ and $C_j$ do not cross for all distinct $i,j\in\{1,\dots,k\}$. 

We use the following result of \citet{FLW-JGAA03}.
\begin{thm}[{\protect\cite[Theorem 1]{FLW-JGAA03}}]
\label{thm:geometricpw}
    Let $G$ be a plane graph with straight edges such that there is a set of at most $k$ parallel lines in the plane hitting the vertices of $G$. Then $\pw(G)\leq k$.
\end{thm}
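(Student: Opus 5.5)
This is a cited result of Felsner, Liotta and Wismath. I would prove it directly by sweeping a line across the drawing and recording which vertices and edges it meets.

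\emph{Setup.} Rotate the plane so that the given parallel lines $\ell_1,\dots,\ell_k$ are vertical, with $x$-coordinates $a_1<\dots<a_k$. Then every vertex of $G$ lies on one of these lines. No edge is vertical except possibly one joining two vertices on the same line. Because the drawing is a plane straight-line embedding, the edges meeting a vertical line $x=t$ with $t\notin\{a_1,\dots,a_k\}$ are pairwise non-crossing segments. So they have a well-defined vertical order.

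\emph{Slabs.} Consider an open slab $a_i<x<a_{i+1}$. Each edge crossing it runs from a vertex on some line $\ell_j$ with $j\le i$ to a vertex on some line $\ell_{j'}$ with $j'\ge i+1$. Fix such an edge $e=uv$. Since $u$ and $v$ lie on different lines, $u$ has a nonempty set of lines to the left of the slab and $v$ to the right.

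\emph{Target decomposition.} I would build a path-decomposition with bags of size at most $k+1$, which gives width at most $k$. The idea is to charge each bag to at most one vertex per line, plus one extra. Sweep from left to right. Within line $\ell_i$, process its vertices from bottom to top. At any moment of the sweep, the bag will consist of:
\begin{itemize}
\item the vertex currently being processed on $\ell_i$;
\item for each other line $\ell_j$, at most one ``active'' vertex of $\ell_j$, namely an endpoint of an edge that is currently crossed by the sweep front, chosen so that it covers the required edges.
\end{itemize}

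\emph{Main obstacle.} The hard part is the covering argument for the second item. A single line $\ell_j$ may carry many vertices whose edges simultaneously cross the sweep front, so it is not obvious that one active vertex per line suffices. To handle this I would use the standard trick for level-planar drawings. Replace the sweep by a monotone curve that moves between consecutive lines and advances one vertex at a time. At each step it passes a vertex $v$ on $\ell_i$ and crosses the edges incident to $v$ toward the right. Then I would use planarity to argue that the edges currently cut by the curve form a sequence whose endpoints on each line $\ell_j$ appear in monotone vertical order. Any edge between $v$ and a previously seen vertex $w$ on $\ell_j$ must then have $w$ adjacent in this order to the current frontier. This is the point where non-crossing is used, and it is the step I expect to need the most care. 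If I cannot make it work, I would fall back on simply citing \cref{thm:geometricpw} as the paper does.

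\emph{Finishing.} Granting that step, the remaining checks are routine:
\begin{itemize}
\item each edge is covered when its right endpoint is processed;
\item each vertex stays in a contiguous interval of bags;
\item each bag has at most $k+1$ vertices.
\end{itemize}
Together these give $\pw(G)\le k$.
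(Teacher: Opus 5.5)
The paper gives no proof of this statement (it is quoted from Felsner, Liotta and Wismath), so your sketch has to stand on its own. It does not: the step you flag as the obstacle is the entire content of the theorem, and the scheme you actually write down fails.

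Put $u_j=(0,j)$ on $\ell_1$ and $w_j=(1,m+j)$ on $\ell_2$ for $j=1,\dots,m$. Add the edges $u_ju_{j+1}$ and $w_jw_{j+1}$ and the pairwise parallel segments $u_jw_j$. This is a plane straight-line drawing on two lines of a ladder, which has pathwidth $2$. In your scheme, $\ell_1$ is processed entirely before $\ell_2$, and each edge is covered when its right endpoint is processed. Contiguity then keeps $u_j$ in every bag until $w_j$ is processed. So the bag in which $w_1$ is processed contains all of $u_1,\dots,u_m$.

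The edges crossing the sweep here do have their endpoints in monotone order along each line, so that property does not give ``one active vertex per line''. The lines must be interleaved ($u_1,w_1,u_2,w_2,\dots$). Your monotone-curve remark does not say how to choose the interleaving in general, or why a vertex may leave the bag once the next vertex on its line is processed. With an interleaved order, the covering rule would also have to refer to the later-processed endpoint rather than the right one.

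One way to close the gap is as follows.
\begin{enumerate}
\item Subdivide every edge where it crosses an intermediate line. $G$ is a minor of the result and pathwidth is minor-monotone, so you may assume every edge joins consecutive vertices of one line or vertices on consecutive lines.
\item By planarity, any two edges between $\ell_i$ and $\ell_{i+1}$ are comparable in the product of the two orders along these lines; otherwise they would cross. Hence there is a monotone staircase walk through the vertex sequences of $\ell_i$ and $\ell_{i+1}$, advancing one line at a time, in which the two endpoints of every such edge are simultaneously current.
\item Consecutive strips share only $\ell_{i+1}$, and both walks advance along it in the same order. So the $k-1$ walks merge into a single interleaving of all $k$ lines.
\item Take as bags the current vertex of each line, plus the vertex about to replace one of them. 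These bags have size at most $k+1$, and each vertex occupies a contiguous run of bags. Edges along a line are covered at the replacement step, and all other edges are covered by the strip walks.
\end{enumerate}
This gives $\pw(G)\leq k$.
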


\begin{lem}
\label{PairwiseDisjointCycles}
    Let $C_1,\dots,C_k$ be pairwise disjoint induced cycles in a planar graph $G$ with $k\geq 2$. Then $\pw(G[V(C_1)\cup\dots\cup V(C_k)])\leq 14k-24$.
\end{lem}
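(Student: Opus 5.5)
The plan is to use \cref{thm:geometricpw}: find a straight-line plane drawing of $H:=G[V(C_1)\cup\dots\cup V(C_k)]$ in which the vertices lie on at most $14k-24$ parallel lines. The only planarity needed is that of $H$. We embed $H$ in the plane, and since the $C_i$ are pairwise disjoint cycles, no two of them cross. So their nesting structure is a laminar family, which we record as a rooted forest $F$ in which $C_j$ is a child of $C_i$ if $C_j$ lies inside $C_i$ with no intermediate cycle. Every edge of $H$ not on a cycle is a chord between two distinct cycles, because each $C_i$ is induced. Each such chord lies in a single region of the plane determined by the cycles, namely the region bounded by a parent cycle together with its children, or the outer region bounded by the roots.

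The construction draws each cycle $C_i$ as a convex polygon whose vertices occupy a bounded number of horizontal lines, so that edges between cycles can be straight. A key observation is that in a region bounded by $C_i$ and its children $C_{j_1},\dots,C_{j_m}$, each child interacts with $C_i$ only through chords. By planarity, the attachment points of the chords on $C_i$ form arcs, and these arcs are non-interleaving for different children. We therefore partition each cycle into a bounded number of arcs of consecutive vertices and place each arc on its own horizontal line, or on a monotone segment meeting few new lines. The aim is a budget of roughly $14$ lines per cycle, minus savings from sharing lines with the root and the leaves, which would give $14k-24$ when $k\geq2$. The constant $14$ should come out of counting: about two lines for the top and bottom of the polygon, and a few lines for each of the at most two sides adjacent to the parent region and the child region. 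Adding the lines over the cycles, together with the savings at the extreme cycles, should then give the stated bound.

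An alternative route avoids geometry by building the path-decomposition directly. Each cycle $C_i$ minus two vertices is a path. We would pick the two vertices so that chords respect the path order, sweep along all the cycles simultaneously according to the tree $F$, and keep the bags' removed vertices plus a constant number of sweep-front vertices per cycle. This gives width $O(k)$, but matching $14k-24$ exactly needs the careful line count. I would try the geometric version first, since \cref{thm:geometricpw} is clearly what the paper intends.

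I expect the main obstacle to be the simultaneous straight-line realisation. All the cycles must be drawn on a shared set of parallel lines with every chord straight and no crossings, while nested cycles sit strictly inside their parents. Handling a cycle that has many children, with chord arcs interleaving around its boundary, without the number of lines growing with the number of children is the delicate part. The fix I would try is to stack the children vertically in the order of their attachment arcs, so that each child reuses the lines of its parent's left and right chains and lines are charged per cycle rather than per region.
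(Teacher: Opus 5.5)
Your proposal does not yet contain a proof. Everything rests on one plane straight-line drawing of all of $G[V(C_1)\cup\dots\cup V(C_k)]$ with its vertices on at most $14k-24$ parallel lines. That drawing is only stated as an aim: the decisive step, realising all cycles and chords simultaneously, is left as the ``main obstacle'', and your suggested fix is not carried out. This is not a routine detail, for two reasons.

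First, your structural picture only handles chords between a parent and its children. Inside one face of $C_1\cup\dots\cup C_k$, a chord may join any two cycles on the boundary of that face, including two siblings or two roots. After contracting each cycle, the chord pattern in a face can be an arbitrary loopless plane multigraph on those cycles. For example, it can be a triangulation in which every edge is a long ladder, and this can happen independently on both sides of each cycle. A single cycle must then present long stretches to many different neighbours. Nothing in your sketch (convex polygons, stacking children by attachment order) bounds the number of lines in that situation.

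Second, there is no count behind $14k-24$. For $k=2$ the bound is $4$, so you would have to draw any two disjoint induced cycles, joined by an arbitrary non-crossing set of edges, on four lines. That has nothing to do with a budget of `roughly 14 lines per cycle minus savings'. The sweep alternative has the same hole: deleting two vertices per cycle does not make chords to several different cycles respect a linear order.

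The idea you are missing is that no global drawing is needed. Pathwidth, unlike the number of lines, absorbs deleted vertices additively: $\pw(G)\le\pw(G-Z)+|Z|$. The paper uses this as follows.
\begin{enumerate}
\item It treats each face $F$ of $C_1\cup\dots\cup C_k$ separately. Let $\mathcal{C}_F$ be the set of cycles incident to $F$. Contracting these cycles gives a plane multigraph $G_F$.
\item Let $S$ be the set of endpoints of chords incident with a face of $G_F$ of length at least $3$. These are the only places where three or more cycles interact.
\item Euler's formula bounds the number of such chords by $6|\mathcal{C}_F|-12$ when $|\mathcal{C}_F|\ge 3$, and there are none otherwise. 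A short count over the $k+1$ faces, using $\sum_F|\mathcal{C}_F|=2k$, gives $|S|\le 12k-24$.
\item Delete $S$, plus one further vertex from each cycle that $S$ misses. Each remaining path then has neighbours in at most one other path per incident face.
\item Hence every component is a chain $P_1,\dots,P_t$ of paths from distinct cycles, with edges only between consecutive paths. Such a chain has a straight-line drawing with $P_1,\dots,P_t$ on $t\le k$ parallel lines, so \cref{thm:geometricpw} gives it pathwidth at most $k$.
\end{enumerate}
Adding back the deleted vertices gives $k+(12k-24)+k=14k-24$. This is where the constant $14$ comes from, not from a per-cycle line budget.
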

\begin{proof}
      We may assume that $G$ is connected and $V(G)=V(C_1)\cup\dots\cup V(C_k)$. Fix a plane embedding of $G$. Let $\mathcal{F}$ be the set of faces of the spanning subgraph $H:=C_1\cup \cdots \cup C_k$ of $G$.
    For each $F\in \mathcal{F}$, let $\mathcal{C}_F$ be the set of cycles of $H$ incident to $F$.
    Let $G_F$ be the plane multigraph obtained from $G$ by deleting all vertices not in $\bigcup \{V(C):C\in \mathcal{C}_F\}$, deleting one edge of each cycle in $\mathcal{C}_F$, and then contracting the paths induced by the sets $V(C)$ with $C\in \mathcal{C}_F$ into vertices (preserving parallel edges).
    Let $E_{F,1}$ be the set of edges of $G_F$ that are incident to at least one face of $G_F$ of order at least $3$, let $E_{F,2}$ be the set of edges that are only incident to faces of $G_F$ of order $2$ and let $G_F'=G_F-E_{F,2}$. 
    If $|\mathcal{C}_F|\leq 2$, then observe that $|E_{F,1}|=0$.

    Suppose that $|\mathcal{C}_F|\geq 3$.
    By the construction, no two faces of order $2$ share an edge in $G_F'$. 
    In particular, removing an edge incident to a face of order $2$ in $G_F'$ merges this face with another face creating a face of order at least $3$.
    For each face of order $2$ in $G_F'$, we choose an incident edge and call the set of these edges $E_F'$.
    Note that $|E_F'| \leq |E(G_F')\setminus E_F'|$.
    Let $G_F'' = G_F' - E_F'$.
    Since $|V(G_F'')| = |V(G_F')| = |V(G_F)| = |\mathcal{C}_F|$, $G_F''$ has at least three vertices and we may apply Euler's formula to obtain $|E(G_F'')|\leq 3|\mathcal{C}_F|-6$.
    Altogether,
    \begin{align*}
        |E_{F,1}|=|E(G_F')| = |E_F'| + |E(G_F')\setminus E_F'| \leq 2|E(G_F'')|\leq 6|\mathcal{C}_F|-12.
    \end{align*}
    
    Let $S_F\subseteq V(G)$ be the set of all endpoints of edges of $G$ corresponding to edges in $E_{F,1}$.
    Now consider an arbitrary cycle $C\in \mathcal{C}_F$, and let $e_1, \dots e_t$ be the edges of $G$ that are incident to $C$ and embedded in $F$, in clockwise order around $C$, and for convenience set $e_{t+1}=e_1$.
    If $i\in \{1,\dots ,t\}$ is such that $e_i$ and $e_{i+1}$ are incident to distinct cycles in $\mathcal{C}_F\setminus \{C\}$, then there is a face of order at least $3$ in $G_F$ incident to both $e_i$ and $e_{i+1}$, and so both $e_i$ and $e_{i+1}$ are in $E_{F,1}$.

    Let $S:=\bigcup \{S_F:F\in \mathcal{F}\}$.
    For each positive integer $i$ let $\mathcal{F}_i$ be the set of faces $F\in \mathcal{F}$ such that $|\mathcal{C}_F|=i$, and let $\mathcal{F}_{\geq 3}=\mathcal{F}\setminus (\mathcal{F}_1\cup \mathcal{F}_2)$.
    Let $a:=|\mathcal{F}_1|$ and $b:=|\mathcal{F}_2|$.
    Since every cycle of $H$ is incident with exactly two faces in $\mathcal{F}$, $\sum_{F\in \mathcal{F}}|\mathcal{C}_F|=2k$.
    Note also that $|\mathcal{F}|=k+1$.
    Now
    \begin{align*}    
    |S|\leq 2\sum_{F\in \mathcal{F}}|E_{F,1}|
     \leq 2\sum_{F\in \mathcal{F}_{\geq 3}} \!\!(6|\mathcal{C}_F|-12)
    &\leq \Big(12\sum_{F\in \mathcal{F}_{\geq 3}}\!\!|\mathcal{C}_F|\Big)
    -24|\mathcal{F}_{\geq 3}|\\
    &=12(2k-a-2b)-24(k+1-a-b)\\
    & =12a-24.
    \end{align*}
    In particular, $a=|\mathcal{F}_1|\leq k$ since $k\geq 2$, so $|S|\leq 12k-24$.

    For each cycle $C$ of $H$ that is disjoint from $S$ choose a vertex in $C$, and let $S'$ be the set of these vertices.
    Let $G':=G-(S\cup S')$.
    Let $\mathcal{P}$ be the set of components of $H-(S\cup S')$ (which by construction is a set of paths).
    By the above paragraph, in $G'$ each $P\in \mathcal{P}$ has neighbours in at most two other paths in $\mathcal{P}$, at most one for each $F\in \mathcal{F}$ incident to $P$.
    In particular, if $P\in \mathcal{P}$ has neighbours in two other paths in $\mathcal{P}$, then one of these is contained in a cycle of $H$ that is embedded inside the cycle of $H$ containing $P$.
    From this we may deduce that for any component $X$ of $G'$, the paths $P_1,\dots, P_t\in \mathcal{P}$ contained in $X$ come from distinct cycles in $H$ and can be ordered so that each edge in $E(X)\setminus E(H)$ lies between $P_i$ and $P_{i+1}$ for some $i\in \{1,\dots, t\}$.
    Note that for each $i\in \{1,\dots, t\}$ the edges between $P_i$ and $P_{i+1}$ are all drawn in the same face of $H$.
    It follows that $X$ admits a straight line drawing such that the paths $P_1,\dots,P_t$ are drawn in $t$ distinct parallel lines in the plane.
    Thus by \cref{thm:geometricpw}, the pathwidth of $X$ is at most $t$, which is at most $k$.
    Thus $\pw(G)\leq \pw(G')+|S\cup S'|\leq k+(12k-24)+k\leq 14k-24$.
    \end{proof}

\begin{lem}\label{lem:noncrossingpw}
    Let $C_1,\dots,C_k$ be non-crossing induced cycles in a plane graph $G$ with $V(G)=V(C_1)\cup\dots \cup V(C_k)$ and $k\geq 2$. Then $\pw(G) \leq 14k-24$.
\end{lem}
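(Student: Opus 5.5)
The plan is to reduce to \cref{PairwiseDisjointCycles} by ``pulling apart'' the cycles. We build a planar graph $G^*$ containing pairwise disjoint induced cycles $C^*_1,\dots,C^*_k$ with $V(G^*)=V(C^*_1)\cup\dots\cup V(C^*_k)$, such that $G$ is a minor of $G^*$. Since pathwidth is minor-monotone, \cref{PairwiseDisjointCycles} then gives
\[\pw(G)\leq\pw(G^*)\leq 14k-24.\]

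\textbf{Step 1: splitting the shared vertices.} Fix the plane embedding of $G$. For each vertex $v$, let $I_v:=\{i: v\in V(C_i)\}$. We replace $v$ by copies $v_i$, one for each $i\in I_v$, placed in a small disc around $v$, and let $C^*_i$ be the cycle obtained from $C_i$ by using the copies indexed by $i$. An edge shared by several cycles is likewise duplicated into parallel copies. Because the cycles are non-crossing, at each vertex $v$ the local rotations of the cycles through $v$ are pairwise non-interleaving. So the copies $v_i$ can be drawn in a small disc so that the $C^*_i$ are pairwise disjoint and pairwise non-crossing; concretely, each $C^*_i$ is pushed slightly to the side dictated by the inside/outside relations among the cycles.

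\textbf{Step 2: restoring $G$ as a minor.} Inside the disc around $v$ we add a path through the copies $\{v_i:i\in I_v\}$; this is possible planarly because the copies sit near $v$ in the cyclic order induced by the nesting. Each branch set $M_v:=\{v_i:i\in I_v\}$ is then connected. Each edge $uw\in E(G)$ that lies on no cycle is realised by an edge between some copy $u_i$ and some copy $w_j$ that are consecutive in the rotation where $uw$ leaves $v$; such copies exist since $uw$ lies in some face between consecutive cycles at $u$ and at $w$. Contracting each $M_v$ then recovers a supergraph of $G$, so $G\preccurlyeq G^*$.

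\textbf{Step 3: keeping the cycles induced.} We must check that each $C^*_i$ is induced in $G^*$. The edges added in Step 2 join copies with distinct indices, so they are never chords. An edge $u_iw_i$ between two copies with the same index $i$ would be a chord of $C^*_i$ only if $uw$ is a chord of $C_i$ in $G$, which is excluded because $C_i$ is induced. When routing an edge $uw$, we should therefore prefer endpoints in different cycles, unless $uw$ itself lies on $C_i$.

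I expect the main obstacle to be making Step 1 rigorous: showing that the non-crossing hypothesis really permits a simultaneous planar local separation at every shared vertex and shared edge, together with the connecting paths among copies. This is especially delicate when many cycles share a long common path. I would argue this locally, vertex by vertex and edge by edge, using the laminar structure of the cycles (their interiors are nested or disjoint) to fix a consistent left/right offset for each cycle along each shared edge.
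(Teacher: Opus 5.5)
Your approach is correct and reduces to \cref{PairwiseDisjointCycles} via minor-monotonicity, as the paper does. You both build a planar $G^*$ with $G\preccurlyeq G^*$ whose $k$ cycles are pairwise disjoint, induced and cover $V(G^*)$. The difference is how you separate the cycles.

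The paper never works locally. It chooses a plane graph $G'$ with $G\preccurlyeq G'$, carrying $k$ non-crossing induced covering cycles, that minimises the number of intersecting pairs. If $C'_1$ meets some $C'_2$ outside it, the paper cuts along $C'_1$ once:
\begin{itemize}
\item it takes disjoint copies of the inside part $G_1$ and the outside part $G_2$, each containing $C'_1$;
\item it draws $G_1$ scaled down inside the copy of $C'_1$ in $G_2$;
\item it joins the two copies of each vertex of $C'_1$ by an edge, and contracts these edges at vertices lying on no outside cycle.
\end{itemize}
Planarity and $G'\preccurlyeq G^*$ are then immediate, and the number of intersecting pairs drops, contradicting minimality.

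Your simultaneous splitting gives a more explicit $G^*$, with each vertex blown up into a cluster of copies, one per cycle through it. The cost is the topology you flag. A clean way to make Step~1 rigorous is to use that the closed discs $D_i$ bounded by the $C_i$ form a laminar family: any two are nested or have disjoint interiors. Then take $C^*_i$ to be the boundary of $D_i$ eroded by $\varepsilon$ times the nesting depth of $D_i$ in a straight-line drawing. This gives pairwise disjoint curves that are automatically consistent along long shared paths.

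Two small fixes are needed.
\begin{itemize}
\item In Step~2, a \emph{path} through the copies of $v$ need not exist. The disjoint arcs through the disc at $v$ cut it into regions whose adjacency structure is a tree. For example, take three interior-disjoint discs touching at $v$, each containing a nested disc touching at $v$: three copies can each be joined only to one other copy. Joining copies that border a common region does give a connected branch set, and that is all the minor argument needs.
\item In Step~3, no preference is needed. If $uw$ lies on no $C_i$, then $u$ and $w$ lie on no common cycle, because each $C_i$ is induced. So every realised edge automatically joins copies with distinct indices.
\end{itemize}
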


\begin{proof}
    Let $G'$ be a plane graph with the following properties:
    \begin{enumerate}
        \item\label{prop1}$G\preccurlyeq G'$,
        \item\label{prop2} there exists a set $\{C'_1,\dots , C'_k\}$ of $k$ pairwise non-crossing induced cycles in $G'$ with $V(G')=V(C'_1)\cup\cdots \cup V(C'_k)$, and
        \item\label{prop3}subject to \ref{prop1} and \ref{prop2}, the number of pairs of cycles in $\{C'_1,\dots,C'_k\}$ that share a vertex is minimised.
    \end{enumerate}
    Such a graph exists since $G'=G$ is a candidate. Suppose for contradiction that the cycles in $\{C'_1,\dots,C'_k\}$ are not pairwise disjoint. Without loss of generality, $C'_1$ and $C'_2$ share a vertex and $C'_2$ is outside $C'_1$. Let $I$ be the set of integers $i\in \{1,\dots, k\}$ such that $C'_i$ is inside $C'_1$ and let $O$ be the set of integers $i\in \{1,\dots,k\}$ such that $C'_i$ is outside $C'_1$.
    Let $G_1:=G'[V(C'_1)\cup \bigcup \{V(C'_i):i\in I\}]$, and let $G_2:=G'[V(C'_1)\cup \bigcup \{V(C'_i):i\in O\}]$.
    Let $G_3$ be obtained from the disjoint union of a copy $H_1$ of $G_1$ and a copy $H_2$ of $G_2$ by adding an edge for every $v\in V(C'_1)$ between the copy of $v$ in $H_1$ and the copy of $v$ in $H_2$.
    A planar drawing of $G_3$ can easily be obtained by drawing $H_2$ according to the induced drawing of $G_2$, and then drawing $H_1$ inside $C'_1$ according to a scaled down version of the induced drawing of $G_1$.
    Let $E^*$ be the set of edges from $H_1$ to $H_2$ in $G_3$ whose endpoints are copies of a vertex in $V(C'_1)$ which is not in any cycle in $\{C'_i:i\in O\}$.
    Let $G^*$ be obtained from $G_3$ by contracting all edges in $E^*$.
    For each $i\in O$, let $C^*_i$ be the cycle in $G^*$ corresponding to the copy of $C'_i$ in $H_2$.
    For each $i\in \{1\}\cup I$, let $C^*_i$ be the cycle of $G^*$ corresponding to the copy of $C'_i$ in $H_1$.
    Now since $C'_2$ is outside $C'_1$, no vertex of the copy of $C'_2$ in $H_2$ is incident with an edge in $E^*$, and so $V(C^*_1)\cap V(C^*_2)=\emptyset$.
    Since $G_3$ is planar, $G^*$ is planar.
    The graph obtained from $G^*$ by contracting all edges with exactly one endpoint in $V(H_1)\setminus V(H_2)$ and one endpoint in $V(H_2)\setminus V(H_1)$ is isomorphic to $G'$, so $G\preccurlyeq G'\preccurlyeq G^*$.
    By construction, $V(G^*)=V(C^*_1)\cup \cdots \cup V(C_k^*)$.
    However, for every pair of integers $i,j\in \{1,\dots,k\}$ such that $V(C^*_i) \cap V(C^*_j)\neq \emptyset$, we have $V(C'_i)\cap V(C'_j)\neq \emptyset$.
    Since $V(C^*_1)\cap V(C^*_2)=\emptyset$, this contradicts the fact that $G'$ satisfies \ref{prop3}.
    Thus, the cycles in $\{C'_1,\dots,C'_k\}$ are pairwise vertex disjoint.

    Now $\pw(G')\leq 14k-24$ by \cref{PairwiseDisjointCycles}, and so  $\pw(G)\leq 14k-24$ by \ref{prop1}.
\end{proof}

\begin{thm}\label{thm:manybagsTriang}
    Let $G$ be a planar triangulation, let $\mathcal{D}:=(B_x:x\in V(T))$ be a refined tree-decomposition of $G$ and let $X$ be a set of $k$ distinct vertices of $T$, with $k\geq 2$.
    The graph $G[\bigcup\{B_x:x\in X\}]$ has pathwidth at most $28k-24$.
\end{thm}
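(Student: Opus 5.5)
The plan is to cover each bag by at most two non-crossing induced cycles and then apply \cref{lem:noncrossingpw} to the resulting family of $2k$ cycles, which gives $\pw\leq 14(2k)-24=28k-24$. We may assume $|V(G)|\geq 4$, since otherwise the union has at most three vertices and the bound holds trivially.

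\textbf{Step 1: two cycles per bag.} By \cref{thm:TriangDetailed}, each $G[B_x]$ is one of the four types a)--d). In each case I would write $B_x$ as the union of the vertex sets of at most two induced cycles of $G[B_x]$, which are therefore also induced cycles of $G$.
\begin{itemize}
\item In type a), with centre $v$ and rim cycle $H-v$, take the rim and a triangle $v u u'$ formed by $v$ and two consecutive rim neighbours. Since $v$ has degree at least $3$, $v$ has consecutive rim neighbours, and they determine a chordless cycle $v\,u\,P\,u'$, where $P$ is the rim path between $u$ and $u'$ that contains no other neighbour of $v$.
\item In type b), a subdivided $K_{2,3}$ is the union of two of its three cycles through the two branch vertices, and these cycles are induced.
\item In type c), take the triangle together with the face cycle opposite to it; together they cover the subdivided $K_4$.
\item In type d), take the two triangles of the prism.
\end{itemize}
If a bag happens to be covered by a single cycle, I will duplicate a cycle or choose an extra triangle so that the total count is still $2k$. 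The lemma only needs the cycles to cover the vertex set, so repeated vertices are harmless.

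\textbf{Step 2: non-crossing.} Fix the (essentially unique) embedding of $G$. The key claim is that any two of the chosen cycles do not cross. I would derive this from the structure of refined decompositions. By \cref{lem:minsep} and \cref{lem:separatingcycles}, each adhesion set $B_x\cap B_y$ induces a separating cycle, and one side of that cycle contains all bags of $T_{y:x}$ minus the adhesion.

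I would first prove that all cycles chosen inside a single bag $B_x$ are non-crossing with respect to every separating cycle $G[B_x\cap B_y]$. The reason is that $B_x\setminus B_y$ lies in a single component of $G-(B_x\cap B_y)$ by \cref{ComponentNormalisation}, so $G[B_x]$ lies entirely on one closed side of that cycle, and every cycle of $G[B_x]$ then lies on that side.

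For cycles $C\subseteq G[B_x]$ and $C'\subseteq G[B_{x'}]$ with $x\neq x'$, let $y$ be the neighbour of $x$ on the $T$-path towards $x'$. Then $C$ lies on one side of the separating cycle $Z=G[B_x\cap B_y]$, and $C'$ lies on the other closed side. Two cycles lying in opposite closed discs bounded by $Z$ cannot cross, since every edge of each is in the corresponding closed side, so each is weakly outside (or inside) the other. I still need to verify this with the paper's edge-based definition of ``inside/outside''. That is the point I expect to be delicate: edges of $C$ and $C'$ may lie on $Z$ itself, and I must use the Jordan curve theorem to conclude that no edge of $C'$ is strictly inside $C$ when $C$ sits in the closed interior of $Z$ and $C'$ in the closed exterior.

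\textbf{Step 3: conclusion.} Let $H:=G[\bigcup_{x\in X}B_x]$ with the embedding induced from $G$. The $2k$ cycles are induced in $H$, cover $V(H)$, and are pairwise non-crossing. Applying \cref{lem:noncrossingpw} with $2k\geq 2$ cycles gives $\pw(H)\leq 14\cdot 2k-24=28k-24$. The main obstacle is Step 2, specifically making sure the within-bag cycle choices from Step 1 are themselves non-crossing. For example, in type a) the rim and the triangle at $v$ must not cross, which holds because the triangle lies in the closed disc of the rim containing $v$. Type c) needs the analogous planar check.
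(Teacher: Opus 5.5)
Your overall strategy is the paper's: cover each bag by two non-crossing induced cycles and apply \cref{lem:noncrossingpw} to $2k$ cycles. The paper's only variation is that for type a) it uses the rim plus the single apex vertex, which is added to the path-decomposition at cost $1$; the final bound is the same.

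\textbf{Gap in Step 1.} Your covers for types c) and d) do not cover the bag. Since \cref{lem:noncrossingpw} needs the cycles to span the whole vertex set, the bound does not follow as written.
\begin{itemize}
\item[] \emph{Type d).} The two triangles of an elongated triangular prism miss every subdivision vertex on the three subdivided rungs $a_1\cdots a_2$, $b_1\cdots b_2$, $c_1\cdots c_2$.
\item[] \emph{Type c).} Let $abc$ be the triangle and $d$ the fourth branch vertex, and suppose all three paths $P_{ad},P_{bd},P_{cd}$ have internal vertices. This does occur, e.g.\ the prism minus $b_2c_2$ with $d=a_2$ and rung $a$ subdivided. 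Any cycle containing an internal vertex of some $P_{xd}$ passes through $d$, and $d$ has degree $3$, so the cycle uses at most two of these paths. Hence the triangle plus any single further cycle leaves the interior of the third path uncovered. In particular, no ``face opposite the triangle'' does the job.
\end{itemize}
The fix is easy. In both cases take two facial cycles that share a subdivided path:
\begin{itemize}
\item[] in d), the faces $a_1b_1\cdots b_2a_2\cdots a_1$ and $b_1c_1\cdots c_2b_2\cdots b_1$;
\item[] in c), the faces $a\,b\,P_{bd}\,d\,P_{da}$ and $b\,c\,P_{cd}\,d\,P_{db}$.
\end{itemize}
These cycles are induced and cover the bag. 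They do not cross, because the part of one that is not on the other is a single path, which lies in one face of the other.

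\textbf{Step 2.} Your cross-bag argument via the adhesion cycle $Z$ does go through. \cref{lem:minsep} gives exactly two components of $G-V(Z)$. The interior of a cycle contained in a closed disc avoids the disc's boundary. Since $C$ is induced, an edge of $C'$ strictly inside $C$ would force a vertex of $C'$ strictly inside $C$, which is impossible.

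It is, however, much heavier than necessary, and it still leaves you case checks for pairs within a bag. The paper handles both situations at once. If $C$ and $C'$ cross then, because $C$ is induced, $V(C')$ meets two components of $G-V(C)$. But $V(C)$ is a proper subset of some bag and $V(C')$ lies in some bag, so this contradicts \cref{ComponentNormalisation}, whether the two bags coincide or not.
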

\begin{proof}
    Given a bag $B$ of $\mathcal{D}$ such that $G[B]$ satisfies condition b, c or d of \cref{thm:TriangDetailed}, it is easy to find a pair of non-crossing cycles $C$ and $C'$ such that $B=V(C)\cup V(C')$.
    Thus, by \cref{thm:TriangDetailed}, for each $x\in X$ there are sets $B'_x$ and $S_x$ such that $B_x=B'_x\cup S_x$, $G[B'_x]$ is a cycle and either $|S_x|=1$ or $G[S_x]$ is a cycle that does not cross $G[B'_x]$.

    Let $X_1$ be the set of vertices in $X$ such that $|S_x|=1$, and let $X_2:=X\setminus X_1$.
    If two cycles $C$ and $C'$ in $G$ cross, then $V(C')$ intersects at least two components of $G-V(C)$.
    Thus by \cref{ComponentNormalisation}, no pair of cycles in $\{G[B'_x]:x\in X\}\cup \{G[S_x]:x\in X_2\}$ cross.
    Let $Z_1:=\bigcup \{B'_x:x\in X\}$ and let $Z_2:=\bigcup \{S_x:x\in X_2\}$.
    By \cref{lem:noncrossingpw}, the pathwidth of $G[Z_1\cup Z_2]$ is at most $14(k+|X_2|)-24$.
    Finally, the pathwidth of $G[\bigcup\{B_{x}:x\in X\}]$ is at most $14(k+|X_2|)-24+|\bigcup\{S_x:x\in X_1\}|=14k-24+14|X_2|+|X_1|\leq 28k-24$.
\end{proof}

We now prove the following result from the introduction. 

\PlanarOptimalManyBagsStatement*

\begin{proof}
    If $\tw(G)\leq 3$ then every optimal tree-decomposition has this property since $\pw(G[\bigcup\{B_x:x\in X\}])\leq |\bigcup\{B_x:x\in X\}|\leq 4|X|\leq 28|X|-24$.
    Now assume $|V(G)|\geq 4$ and $\tw(G)\geq 4$. By \cref{lem:twtriangulation} there is a planar triangulation $G'$ such that $G$ is a spanning subgraph of $G'$ and $\tw(G')=\tw(G)$.
    By \cref{thm:manybagsTriang} it suffices to take a refined optimal tree-decomposition of $G'$.
\end{proof}

\subsection{Linear Grid Minors}

\citet{RST94} showed that for planar graphs, the bound in the Grid Minor Theorem can be improved to linear, in particular, every planar graph with treewidth at least $6k-4$ contains a $k\times k$-grid minor (see \citep{Grigoriev11} for slightly improved bounds). As an aside, we now demonstrate how our machinery can be used to obtain a short proof of this result with slightly worse bounds. 

Our proof relies on the following result that is implicitly used in (6.3) by \citet{RST94}, who say the proof is a `straightforward modification' of the proof of (4.1) by \citet{RS-III}; see \citep[Lemma~7]{DMNW} for a complete proof.

\begin{lem}
\label{FindGridMinor}
Let $C=(v_1,\ldots,v_{4t})$ be the cycle bounding the outerface in a plane graph $G$, for some integer $t\geq 2$. 
Let $A'=\{v_1,\ldots,v_t\}$, $A=\{v_{t+1},\ldots,v_{2t}\}$, $B'=\{v_{2t+1},\ldots,v_{3t}\}$, and $B=\{v_{3t+1}, \ldots, v_{4t}\}$. If $G$ contains $t$ pairwise vertex disjoint $(A,B)$-paths and $t$ pairwise vertex disjoint $(A',B')$-paths then $G$ contains a $t \times t$ grid minor.
\end{lem}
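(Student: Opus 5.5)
The plan is a planar ``cross'' argument: build the $t\times t$ grid as a minor of the union of the two path systems, with the $(A,B)$-paths as rows and the $(A',B')$-paths as columns.

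Let $\mathcal{P}=\{P_1,\dots,P_t\}$ be the disjoint $(A,B)$-paths and $\mathcal{Q}=\{Q_1,\dots,Q_t\}$ the disjoint $(A',B')$-paths. We may delete everything outside the outer cycle $C$ except these paths, and shorten each path so that only its endpoints lie on $C$. Since the four arcs $A',A,B',B$ appear in this cyclic order around the outer face, each $P_i$ is a chord of the disc bounded by $C$ that separates $A'$ from $B'$. Disjointness and the Jordan curve theorem then give a linear order: we can index so that $P_1,\dots,P_t$ appear in order from the $A'$ side to the $B'$ side, and their endpoints appear in matching order along $A$ and along $B$. Similarly, $Q_1,\dots,Q_t$ are ordered from the $A$ side to the $B$ side. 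Every $Q_j$ must meet every $P_i$, since $P_i$ separates $A'$ from $B'$ inside the disc.

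Among all such choices, I would fix the pair $(\mathcal{P},\mathcal{Q})$ minimising $|E(\bigcup\mathcal{P}\cup\bigcup\mathcal{Q})|$, and secondarily minimising the number of components of $P_i\cap Q_j$ summed over all $i,j$. The target property is that, traversing $Q_j$ from $A'$ to $B'$:
\begin{enumerate}[(i)]
\item $Q_j$ meets $P_1,\dots,P_t$ in this order;
\item each $Q_j\cap P_i$ is a single subpath;
\item these subpaths occur along $P_i$ in the order $j=1,\dots,t$.
\end{enumerate}
Given this, the grid minor is immediate. For each $i$, contract $P_i$ to a path by contracting each segment $Q_j\cap P_i$ to one vertex $(i,j)$ and the pieces of $P_i$ between consecutive segments to edges. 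The pieces of $Q_j$ strictly between $P_i$ and $P_{i+1}$ become the vertical edges. The initial and final pieces of paths are deleted.

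The main obstacle is establishing (i)--(iii) by uncrossing. The first thing I would try is the following. Suppose some $Q_j$ leaves $P_i$ and later returns to it, or hits $P_{i+1}$ before $P_i$. Then there is a subpath $R$ of $Q_j$, internally disjoint from $P_i$, with both ends on $P_i$. Together with the subpath $P_i[a,b]$ joining those ends, $R$ bounds a disc $\Delta$ in the plane. If no other $Q_{j'}$ enters $\Delta$, reroute $Q_j$ along $P_i[a,b]$. If instead a $P_{i'}$ lies partly in $\Delta$, exchange the corresponding pieces of $P_i$ and $Q_j$ to reroute $P_i$ along $R$. In either case we keep disjoint systems with the same endpoints, and we must check that the edge count or the number of intersection components strictly drops.

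The delicate point is that rerouting $Q_j$ along $P_i$ may collide with another $Q_{j'}$ that touches $P_i[a,b]$. The ordering of the $Q$'s together with planarity should force any such $Q_{j'}$ to be trapped inside $\Delta$. In that case the innermost offending pair can be treated first, by induction on the area (number of faces) of $\Delta$. Making this innermost-disc induction rigorous is where I expect most of the work. If it becomes unwieldy, the fallback is to follow the cited argument of \citet[(4.1)]{RS-III} (see \citep[Lemma~7]{DMNW}) directly.
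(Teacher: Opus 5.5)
\textbf{There is a genuine gap: establishing (i)--(iii) is the whole content of the lemma, and your proposal leaves it as an expectation.}

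Your final step is fine. If (i)--(iii) hold, contracting each $Q_j\cap P_i$ and the $P_i$-pieces between consecutive segments does give the $t\times t$ grid. The preliminary facts are also correct: each $Q_j$ meets each $P_i$, and both families are monotonically ordered. However, these should be derived from the interleaving of the endpoints on $C$, not from ``shortening''. An $(A,B)$-path may pass through vertices of $A'\cup B'$, even run along $C$, and cutting it at such vertices destroys the $(A,B)$ property; the same holds for the $(A',B')$-paths. Note that the paper does not prove this lemma either. It imports it from \citet[(6.3)]{RST94} and \citet[(4.1)]{RS-III}, with a complete proof in \citep[Lemma~7]{DMNW}, which is exactly your fallback.

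The uncrossing you sketch also fails as described, for two reasons.
\begin{enumerate}[(1)]
\item If some $P_{i'}$ has a vertex inside $\Delta$, it must enter $\Delta$ through $R$, since it is disjoint from $P_i$ and its ends lie on $C$. So rerouting $P_i$ along $R$ is impossible in precisely the case where you invoke it.
\item Your hope that every $Q_{j'}$ meeting $P_i[a,b]$ is trapped inside $\Delta$ is false. Put $A'$ below. Let $Q_j$ reach $a$ from below, run along $R$ above $P_i$ to $b$, and continue upwards from $b$. Then $Q_{j'}$, to the right of $Q_j$, can come up from below, touch an interior vertex $v$ of $P_i[a,b]$, return below, and cross $P_i$ to the right of $b$. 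This respects both orderings. Symmetrically, $P_{i+1}$ can dip down and touch an interior vertex of $R$ from outside $\Delta$.
\end{enumerate}

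In that configuration $\Delta$ may be an innermost, even empty, face. Yet rerouting $Q_j$ along $P_i[a,b]$ hits $Q_{j'}$, and rerouting $P_i$ along $R$ hits $P_{i+1}$, so neither move keeps the families disjoint. Induction on the area of $\Delta$ does not help, because the obstructing paths lie outside $\Delta$.

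What is missing is a genuinely global exchange argument, or a different potential or choice of branch sets, that copes with such tangencies. As it stands, it is not even clear that a choice of linkages satisfying (ii) exists in every such $G$.
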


\begin{cor}
\label{FindGridMinorWithoutCycle}
Let $C$ be the cycle bounding the outerface in a plane graph $G$. Let $v_1,\ldots,v_{4t}$ be vertices in $C$ in order, let $A'=\{v_1,\ldots,v_t\}$, $A=\{v_{t+1},\ldots,v_{2t}\}$, $B'=\{v_{2t+1},\ldots,v_{3t}\}$, and $B=\{v_{3t+1}, \ldots, v_{4t}\}$. If $G$ contains $t$ pairwise vertex disjoint $(A,B)$-paths and $t$ pairwise vertex disjoint $(A',B')$-paths then $G-E(C)$ contains a $(t-2) \times (t-2)$ grid minor.
\end{cor}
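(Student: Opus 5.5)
We derive \cref{FindGridMinorWithoutCycle} from \cref{FindGridMinor}. There are two issues to handle. First, the cycle $C$ may contain extra vertices besides $v_1,\dots,v_{4t}$. Second, the edges of $C$ must be avoided. The plan is to pass to a smaller index $t-2$ and to a subgraph whose outer cycle is built from the given paths rather than from $C$.

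\textbf{Step 1 (discard the outer edges).} Let $\mathcal{P}$ be the $t$ disjoint $(A,B)$-paths and $\mathcal{Q}$ the $t$ disjoint $(A',B')$-paths. Each of these may be assumed to meet $A\cup B$ (respectively $A'\cup B'$) only at its endpoints. Order $\mathcal{P}$ as $P_1,\dots,P_t$ by the order of their endpoints along $A$; by planarity and disjointness this agrees, in reverse, with their order along $B$. Similarly order $\mathcal{Q}$ as $Q_1,\dots,Q_t$.

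The outermost paths $P_1,P_t,Q_1,Q_t$ may use edges of $C$, and so may the others. Consider the region $R$ bounded by the closed curve formed by $P_1$, $Q_1$, $P_t$, $Q_t$ together with suitable segments. We keep only the middle paths $P_2,\dots,P_{t-1}$ and $Q_2,\dots,Q_{t-1}$. Every $P_i$ with $2\le i\le t-1$ must cross every $Q_j$ with $2\le j\le t-1$, since their endpoints interleave on $C$. We then restrict each of these paths to the segment between its first and last intersection with $Q_1\cup Q_t$ (respectively $P_1\cup P_t$). These segments lie in the closed disc bounded by $P_1\cup Q_1\cup P_t\cup Q_t$.

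\textbf{Step 2 (apply \cref{FindGridMinor}).} Let $G_0$ be the plane graph formed by $P_1\cup P_t\cup Q_1\cup Q_t$ together with the truncated middle paths, taking the new outer cycle $C'$ to be a cycle contained in $P_1\cup Q_1\cup P_t\cup Q_t$. Up to suppressing vertices, $C'$ has $4(t-2)$ designated terminals: the endpoints of the truncated paths. These come in four consecutive blocks of $t-2$, and $G_0$ contains $t-2$ disjoint paths between opposite blocks in each direction. Extra vertices of $C'$ can be handled by contracting edges of $C'$ until only the terminals remain; this is legal since contraction preserves minors. Then \cref{FindGridMinor} gives a $(t-2)\times(t-2)$ grid minor.

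\textbf{Main obstacle.} The hard part is ensuring this minor avoids $E(C)$. The middle paths can be rerouted, or more simply their edges on $C$ removed. The crux is the claim that an inner path $P_i$ or $Q_j$ with $2\le i,j\le t-1$ cannot use an edge of $C$ except near its ends, because the outer paths separate it from $C$. Alternatively one can use the cycle formed by the outer paths, whose edges may lie on $C$, only as the frame that is contracted away. I would first try the cleaner route: build the grid model directly from the intersections of $P_i$ and $Q_j$ for $2\le i,j\le t-1$ inside the disc, placing the branch sets at crossing segments. This avoids $C'$ entirely, so no edge of $C$ is used, and it is where the loss of $2$ in each dimension comes from.
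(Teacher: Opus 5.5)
The paper gives no proof of this corollary; it is stated as a direct consequence of \cref{FindGridMinor}. Judged on its own, your proposal has a genuine gap, and it sits exactly at the step you call the main obstacle, which you leave unresolved.

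In Step~2 you apply \cref{FindGridMinor} to $G_0$, whose outer cycle $C'$ lies in $P_1\cup Q_1\cup P_t\cup Q_t$. These outer paths may run along $C$, so $C'$ can consist largely of edges of $C$. Used as a black box, the lemma gives no control over which edges its grid model uses, and neither remedy you suggest closes this:
\begin{itemize}
\item Using the frame ``only as something that is contracted away'' has no content for a model you did not construct.
\item Building the model ``directly from the crossing segments of $P_i$ and $Q_j$'' is precisely the nontrivial content of \cref{FindGridMinor}, in fact a frame-free strengthening of it. The set $P_i\cap Q_j$ may consist of many segments, visited by $P_i$ and by $Q_j$ in mutually inconsistent orders. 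There is no grid model to read off without a Robertson--Seymour type rerouting argument.
\end{itemize}
There is also an accounting problem. Discarding $P_1,P_t,Q_1,Q_t$ already spends the entire loss of $2$. The natural way to then purge the frame from the model, namely deleting grid vertices whose branch sets meet $C'$, costs another $2$ and gives only a $(t-4)\times(t-4)$ grid.

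Two smaller points:
\begin{itemize}
\item The segment of $P_i$ between its first and last intersections with $Q_1\cup Q_t$ can cross back over $Q_t$ and run along $C$ between consecutive vertices of $A$. What you want is a subpath from $Q_t$ to $Q_1$ that is internally disjoint from both; such a subpath does avoid $E(C)$.
\item ``The closed disc bounded by $P_1\cup Q_1\cup P_t\cup Q_t$'' is undefined when these paths meet repeatedly. So the existence of $C'$ carrying the $4(t-2)$ terminals in four consecutive blocks is not established.
\end{itemize}

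A route that keeps \cref{FindGridMinor} as a black box and loses exactly $2$ is to spend the loss on the grid rather than on the linkages.
\begin{enumerate}
\item Contract each maximal run of non-designated vertices of $C$ into its two designated neighbours along $C$. By planarity, at most two paths of a linkage meet a given run. If two do, they are the paths ending at the two ends of the run, and all contact vertices of one precede those of the other. So the split point of each run can be chosen so that no two paths of the same linkage are identified.
\item The resulting graph $G_1$ has outer cycle $C_1=(v_1,\dots,v_{4t})$. Since only edges of $C$ were contracted, $G_1-V(C_1)=G-V(C)$.
\item \cref{FindGridMinor} gives a $t\times t$ grid model in $G_1$. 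Every branch set meeting $V(C_1)$ yields a grid vertex on the unbounded face of the drawing of the grid induced by the model.
\item For $t\geq 3$ the grid is a subdivision of a $3$-connected planar graph, so by Whitney's theorem this face is either the perimeter face or a single $4$-face.
\item Deleting the grid vertices on that face still leaves a $(t-2)\times(t-2)$ grid minor. In the $4$-face case, merge the two rows and two columns through the deleted $2\times 2$ block into neighbouring rows and columns.
\item The model of this smaller grid avoids $V(C_1)$, so it lies in $G-V(C)\subseteq G-E(C)$.
\end{enumerate}
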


A set $S$ of vertices in a graph $G$ is \defn{reducible} if there is a separation $(A,B)$ of $G$ such that both $|(S\cup B)\cap A|$ and $|(S\cup A)\cap B|$ are strictly less than $|S|$, otherwise $S$ is \defn{irreducible}. 

\begin{lem}[{\protect\citep[Lemma 12]{HW26}}]
\label{AtomicIrreducible}
Every bag of an atomic tree-decomposition of a graph is irreducible.
\end{lem}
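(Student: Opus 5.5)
Every bag of an atomic tree-decomposition is irreducible. I would argue by contradiction, using the lexicographic minimality of the fatness $N(\DD)$. Let $\DD=(B_x:x\in V(T))$ be atomic and suppose some bag $S:=B_{x_0}$ is reducible, witnessed by a separation $(A,B)$ of $G$ with $|(S\cup B)\cap A|<|S|$ and $|(S\cup A)\cap B|<|S|$. Set
\[A^*:=(S\cup B)\cap A=(S\cap A)\cup(A\cap B)\quad\text{and}\quad B^*:=(S\cup A)\cap B=(S\cap B)\cup(A\cap B).\]
The idea is to replace the single bag $S$ by the two strictly smaller bags $A^*$ and $B^*$. The other bags are pushed to one side of the separation and padded with the separator, so that no bag grows beyond what the fatness order tolerates.

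\textbf{Construction.} Take two copies $T_1,T_2$ of $T$. For $x\in V(T)$, define
\[B^1_{x}:=(B_x\cap A)\cup \pi_A(B_x)\quad\text{and}\quad B^2_x:=(B_x\cap B)\cup\pi_B(B_x).\]
Here $\pi_A$ is the standard projection: each vertex of $B_x\cap (B\setminus A)$ is replaced by the separator vertices it routes to. In practice I would use the cleaner Thomas-style construction instead. Let $B^1_x:=B_x\cap A$ if $x$ is on the $A$-side, and so on.

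The standard trick is as follows. For $x\ne x_0$, set $B^1_x:=(B_x\cap A)\cup(\text{the part of } A\cap B \text{ needed})$. Then join $T_1$ and $T_2$ at the copies of $x_0$, with bags $A^*$ and $B^*$.

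The simplest version to check: $B^1_x:=(B_x\cap A)\cup (A\cap B)\cap\{\text{vertices whose subtree meets the path to } x_0\}$. That is the usual proof from Thomas's lemma. I would verify three things. First, it is a tree-decomposition: edges of $G[A]$ lie in some $B_x\cap A$, and connectivity holds because the $A\cap B$ vertices are routed along the path to $x_0$. Second, $|B^1_x|\le|B_x|$ for all $x$, with $A^*,B^*$ both smaller than $|S|$. Third, bags larger than $|S|$ are not duplicated in size.

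\textbf{Main obstacle: the counting.} Copying $T$ doubles the number of bags, so the fatness comparison is the delicate step. The key claim is that for any $x$ with $|B_x|\ge |S|$, at most one of $|B^1_x|,|B^2_x|$ is $\ge |S|$ unless the bag is unchanged. Meanwhile the bag $S$ itself is replaced by two bags of size $<|S|$.

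I would handle this with the standard Menger-type choice. Choose $(A,B)$ witnessing reducibility with $|A\cap B|$ minimum. Then Menger gives $|A\cap B|$ disjoint paths linking the separator to $S\cap A$ and to $S\cap B$. These paths allow each vertex of $B_x\cap(B\setminus A)$ to be charged injectively to a separator vertex, which yields $|B^1_x|\le|B_x|$. If the charging is tight on both sides for some large bag, that contradicts the size inequalities. Hence $n_i$ is unchanged for all $i>|S|$, $n_{|S|}$ strictly decreases, and $N$ decreases lexicographically. This contradicts atomicity. If needed, I would then normalise, which only decreases fatness further.
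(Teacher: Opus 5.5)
The paper quotes this lemma from Hendrey and Wood without proof, so I am judging your sketch against the standard minimum-fatness argument. That argument is the one you adopt: a reducing separation $(A,B)$ of minimum order, Menger paths, and two copies of $T$ with projected bags glued at the copies of $x_0$. The skeleton is right. However, the construction is never actually fixed, and the step you yourself call the main obstacle is not established.

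\textbf{The construction.} Your ``simplest version'' adds the separator vertices whose subtrees meet the $T$-path from $x$ to $x_0$. This can make $B^1_x$ larger than $B_x$. It also does not put $(A\cap B)\setminus S$ into the bag at $x_0$, so the glued object is not a tree-decomposition. What works is the following.
\begin{itemize}
\item For each $v\in A\cap B$, let $P^A_v\subseteq G[A]$ and $P^B_v\subseteq G[B]$ be disjoint Menger paths from $v$ to $S\cap A$ and to $S\cap B$ respectively.
\item Their existence needs a check you omit. A smaller separator $Y$ of $A\cap B$ from $S\cap B$ in $G[B]$ necessarily contains $S\cap A\cap B$, and one must verify it yields a reducing separation of order $|Y|$, contradicting minimality.
\item Set $B^A_x:=(B_x\cap A)\cup\{v: B_x\cap V(P^B_v)\neq\emptyset\}$, and define $B^B_x$ symmetrically.
\item The charging goes from each added $v\notin B_x$ to a vertex of $B_x\cap V(P^B_v)\subseteq B\setminus A$. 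This is the opposite of the direction you state.
\end{itemize}

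\textbf{The counting.} Your key claim is false as stated. Take a bag with $p\ge|S|$ vertices in $A\setminus B$ and $q\ge|S|$ vertices in $B\setminus A$, none on the paths. It produces two new bags of sizes $p,q\ge|S|$, so $n_i$ does change for some $i>|S|$. Such bags are harmless, since both copies strictly shrink. The dangerous case is when one copy keeps size $|B_x|$ while the other has size in $[|S|,|B_x|)$. Your sketch leaves this open, and that is where the real content lies.

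The missing idea is an inequality hidden in reducibility. Write $m:=|A\cap B|$ and $c:=|S\cap A\cap B|$.
\begin{itemize}
\item The two defining inequalities say exactly that $|S\setminus A|\ge m-c+1$ and $|S\setminus B|\ge m-c+1$. Hence $|S|\ge 2m-c+2$.
\item Suppose $|B^A_x|=|B_x|$. Then the charging is onto $B_x\cap(B\setminus A)$.
\item The paths starting in $S\cap A\cap B$ are trivial, so $|B_x\cap(B\setminus A)|\le m-c$.
\item Therefore $|B^B_x|\le|B_x\cap(B\setminus A)|+m\le 2m-c<|S|$.
\end{itemize}

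With this in hand, compare fatness at a specific level $h$: the largest among $|S|$ and the sizes of bags both of whose copies strictly shrink. Above $h$, every bag is replaced by one bag of equal size and one of size $<|S|$, so those counts are unchanged. At level $h$, $n_h$ strictly drops. Without this inequality and this choice of comparison level, your sketch does not show that $N$ decreases.
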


\begin{thm}
    For every positive integer $k$ and every planar graph $G$ of treewidth at least $15k+21$, the $k\times k$-grid is a minor of $G$.
\end{thm}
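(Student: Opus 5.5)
We would first reduce to a planar triangulation. By \cref{lem:twtriangulation}, $G$ is a spanning subgraph of a planar triangulation $G'$ with $\tw(G')=\tw(G)$ (since $\tw(G)\geq 4$). A grid minor of $G'$ need not be one of $G$, however. Two options: (i) keep $G$ itself and take an atomic tree-decomposition of $G$, using \cref{thm:planarfulldetail} through the supergraph produced there; or (ii) work in $G'$ and use \cref{FindGridMinorWithoutCycle}, which only returns a grid in $G'-E(C)$, and control the added triangulation edges. Option (i) looks cleaner. We take an atomic tree-decomposition $\DD=(B_x)$ of $G$. It is optimal, refined (\cref{lem:refinement}), and each bag is irreducible (\cref{AtomicIrreducible}). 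Let $B$ be a bag with $|B|=\tw(G)+1\geq 15k+22$.

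Next we use the structure of $B$. By \cref{thm:planarfulldetail} (or \cref{thm:TriangDetailed} in the triangulated setting), $G'[B]$ contains a cycle $C$ with $|B\setminus V(C)|$ small, or is a union of two non-crossing cycles. Here the case of a large bag matters. Case (a) gives a cycle through all but one vertex. In cases (b)–(d), $B$ splits into at most three paths between two branch vertices, or into two triangles joined by three paths. In each case we pick a cycle $C$ in $G'[B]$ and a set $U\subseteq V(C)$ of at least $(|B|-c)/2$ vertices, for a small constant $c$, lying on a face-bounding cycle of a suitable plane graph. The idea is to let $C$ bound the outer face of $G'[N]$, where $N$ is the union of $C$ and the components of $G'-V(C)$ on one side. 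Because $B$ is non-separable (\cref{PlanarNonSep}), all of $B\setminus V(C)$ lies on one side of $C$, and we take the other side.

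The key step is to produce linkages from irreducibility. We choose $4t$ vertices of $C$ in cyclic order, with $t\approx |B|/15$, and split them into $A',A,B',B$. Suppose there are no $t$ disjoint $(A,B)$-paths in the graph $H$ inside $C$. Menger then gives a separator $Z$ with $|Z|<t$ separating $A$ from $B$ in $H$. From $Z$ we build a separation of $G$ that reduces $B_x$, so $B_x$ would be reducible, a contradiction. Concretely, $Z$ together with a bounded number of vertices of $C$ (those of the arcs of $C$ containing $A'$ and $B'$, which we keep short by the choice of indices) separates $B_x$ into two pieces, each of size less than $|B_x|$. This counting is where the constant $15k+21$ comes from. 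It must hold simultaneously for the $(A',B')$ linkage. Then \cref{FindGridMinorWithoutCycle} yields a $(t-2)\times(t-2)$ grid minor in $H-E(C)$, and $t-2\geq k$ by the treewidth bound.

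The main obstacle is this irreducibility step. We need a separation $(A_0,B_0)$ of the whole of $G$, not just of $H$, with $|(B_x\cup B_0)\cap A_0|<|B_x|$ and $|(B_x\cup A_0)\cap B_0|<|B_x|$, where $A_0,B_0$ denote the sides of the separation. This requires extending the Menger cut inside the disc bounded by $C$ to all of $G$, using that $B_x\setminus V(C)$ and the other components lie on one side. It also requires ensuring that edges of $C$ that are not edges of $G$ are avoided, which is why the grid is taken in $H-E(C)$. We would first attempt the case where $G[B_x]$ contains a Hamiltonian-minus-one-vertex cycle, and then reduce the remaining cases to it by losing a constant fraction of $B_x$.
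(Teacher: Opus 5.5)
Your set-up matches the paper: an atomic decomposition of $G$ itself, the supergraph $G'$ from \cref{thm:planarfulldetail}, a long cycle in $G'[B_x]$, irreducibility plus Menger, then \cref{FindGridMinorWithoutCycle}. The key step, however, fails as stated. You fix in advance the side of $C$ that does not contain $B_x\setminus V(C)$. You then claim that if the graph $H$ on that side has no $t$ disjoint $(A,B)$-paths, a Menger cut $Z$ in $H$ plus some arcs of $C$ gives a separation of $G$ reducing $B_x$. It does not. $A$ and $B$ can still be joined by paths that avoid $B_x\setminus(A\cup B)$ and run through components of $G-B_x$ on the \emph{other} side of $C$. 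For instance, in case (a) $v$ may have degree $3$, with one face of $G'[B_x]$ between two spokes bounded by most of $C$. Such paths miss $Z$ and the arcs of $C$, so nothing blocks them. Irreducibility simply says nothing about a prescribed side, so this is not a technicality to be patched by "extending $Z$": the linkage it guarantees may genuinely live on the other side. Choosing the side adaptively does not help either. With only four blocks, the $(A,B)$-linkage could lie mostly inside $C$ and the $(A',B')$-linkage mostly outside, and then \cref{FindGridMinorWithoutCycle} does not apply.

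The missing idea is the paper's pigeonhole over five arcs.
\begin{itemize}
\item Take a cycle $O$ of $G'[B_x]$ with $|V(O)|\geq\lceil 2|B_x|/3\rceil\geq 10k+15$. This exists because every vertex lies on at least two of three cycles $O_1,O_2,O_3$.
\item Take five disjoint subpaths $Q_1,\dots,Q_5$ of $O$, in cyclic order, each with $2k+3$ vertices.
\item Apply Menger in $G-B^-_{i,j}$, where $B^-_{i,j}:=B_x\setminus(V(Q_i)\cup V(Q_j))$. Bag vertices placed in the separator cost nothing in the reducibility inequality, so a cut of size less than $2k+3$ would at once give a reducing separation. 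Hence there are $2k+3$ disjoint $(Q_i,Q_j)$-paths in $G$. Each is internally disjoint from $O$, so it lies entirely inside or entirely outside $O$.
\item Colour each of the five pairs $(1,3),(1,4),(2,4),(2,5),(3,5)$ by a side holding at least $k+2$ of its paths. Some three pairs share a side. These pairs form a $5$-cycle, so two of the three are disjoint, and disjoint pairs interleave around $O$.
\item This gives two crossing linkages of size $k+2$ on the same side, and \cref{FindGridMinorWithoutCycle} yields a $k\times k$ grid in $G''-E(O)\subseteq G$.
\end{itemize}
Applying Menger in $G$ rather than in a graph built from $G'$ also ensures the paths use only edges of $G$. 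Your $H$ is defined via components of $G'-V(C)$, so its paths could use edges of $G'$ that are not in $G$. Your constant accounting also does not close as sketched: with $t=\lfloor|B_x|/15\rfloor$ and $|B_x|=15k+22$ you get $t-2=k-1$.
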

\begin{proof}
    Consider an atomic tree-decomposition $\mathcal{D}=(B_x:x\in V(T))$ of $G$, and let $G'$ be the planar supergraph guaranteed by \cref{thm:planarfulldetail} (recall that an atomic tree-decomposition is refined, see \Cref{lem:refinement}).
    Let $B$ be a bag of $\mathcal{D}$ of size at least $15k+22$.
    Since $15k+22\geq 4$, $H:=G'[B]$ satisfies condition (a), (b), (c) or (d) in \cref{thm:planarfulldetail}.
    If $H$ satisfies (a), let $u$ and $w$ be neighbours of $v$ in the cycle $H-v$. Let $P_1$ and $P_2$ be the two internally disjoint paths from $u$ to $w$ in $H-v$, and let $P_3$ be the path $uvw$.
    Let $O_1:=P_2\cup P_3$, $O_2:=P_1\cup P_3$ and $O_3:=P_1\cup P_2$.

    If $H$ satisfies (b), (c) or (d), consider the planar embedding of $H$ in \cref{fig:characteriation}, and let $O_1$, $O_2$ and $O_3$ be distinct facial cycles of $H$ such that all other facial cycles are triangles.

    In all cases, it can be quickly verified that each vertex of $H$ is in at least two of the cycles in $\{O_1,O_2,O_3\}$.
    Thus there is a cycle $O\in \{O_1,O_2,O_3\}$ with $|V(O)|\geq \lceil\frac{2}{3}|B|\rceil\geq  10k+15$.
    Let $Q_1$, $Q_2$, $Q_3$, $Q_4$ and $Q_5$ be disjoint paths in $O$ with $2k+3$ vertices each, listed in cyclic order around $O$.

    For distinct $i,j\in \{1,2,3,4,5\}$, let $B^-_{i,j}:=B\setminus (V(Q_i)\cup V(Q_j))$ and let $S_{i,j}$ be a minimum set of vertices separating $V(Q_i)$ from $V(Q_j)$ in $G-B^-_{i,j}$. By \cref{AtomicIrreducible}, $B$ is irreducible in $G$, and so $|S_{i,j}|\geq \min\{|V(Q_i)|,|V(Q_j)|\}=2k+3$.
    By Menger's Theorem, there is a set $\mathcal{P}_{i,j}$ of $2k+3$ pairwise vertex disjoint paths from $V(Q_i)$ to $V(Q_j)$ in $G-B^-_{i,j}$.
    Let $\mathcal{P}_{i,j}^{\textrm{in}}$ be the set of paths in $\mathcal{P}_{i,j}$ that are embedded inside $O$ (discounting their endpoints) and let $\mathcal{P}_{i,j}^{\textrm{out}}$ be the set of paths in $\mathcal{P}_{i,j}$ that are embedded outside $O$ (discounting their endpoints).
    Since each path in $\mathcal{P}_{i,j}$ is internally disjoint from $O$, $\mathcal{P}_{i,j}=\mathcal{P}_{i,j}^{\textrm{in}}\cup \mathcal{P}_{i,j}^{\textrm{out}}$.
    
    Let $\mathcal{X}:=\{(1,3),(1,4),(2,4),(2,5),(3,5)\}$.
    Fix a planar embedding of $G$, and for each $(i,j)\in \mathcal{X}$, colour $(i,j)$ red if $|\mathcal{P}_{i,j}^{\textrm{in}}|\geq k+2$ and colour $(i,j)$ blue if $|\mathcal{P}_{i,j}^{\textrm{out}}|\geq k+2$.
    Since $|\mathcal{P}_{i,j}|= 2k+3$, each $(i,j)\in \mathcal{X}$ is assigned a colour this way.
    Without loss of generality, there are at least three red pairs in $\mathcal{X}$. Thus there are distinct $(i,j)$ and $(i',j')$ in $\mathcal{X}$ with $i,j,i',j'$ all distinct such that $|\mathcal{P}_{i,j}^{\textrm{in}}|\geq k+2$ and $|\mathcal{P}_{i',j'}^{\textrm{in}}|\geq k+2$. Since $(i,j)$ and $(i',j')$ are distinct elements of $\mathcal{X}$ with $i,j,i',j'$ all distinct, without loss of generality $(Q_i,Q_{i'},Q_j,Q_{j'})$ is the cyclic order of these paths around $O$.
    Consider the graph $G'':= \bigcup (O\cup\mathcal{P}_{i,j}^{\textrm{in}}\cup \mathcal{P}_{i',j'}^{\textrm{in}})$.
    By \cref{FindGridMinorWithoutCycle}, the graph $G''-E(O)$ contains a $k\times k$ grid minor. 
    Thus $G$ contains a $k\times k$ grid minor, since $G''-E(O)$ is a subgraph of $G$.
\end{proof}

\section{Extensions}
\label{Extensions}

This section considers a broad collection of minor-monotone graph parameters $\beta$, which we call self-similar and robust. This includes treedepth, pathwidth and treewidth. We determine which minor-closed graph classes $\mathcal{C}$ have the property that every bag of every refined tree-decomposition of a graph from $\mathcal{C}$ has bounded $\beta$. 

To this end, we introduce the notion of a `container' for a graph parameter. Let $\mathcal{F}=(G_1,G_2,\dots)$ 
be a sequence of graphs with $G_i\preccurlyeq G_j$ and $0<|V(G_i)|<|V(G_j)|$ for all positive integers $i,j$ with $i<j$. For a graph $G$, define \defn{$\phi_{\mathcal{F}}(G)$} to be the minimum positive integer $k$ such that $G_k$ is not a minor of $G$ (which exists because $G_{|V(G)|+1}$ is not a minor of $G$). Note that $\phi_{\mathcal{F}}$ is a minor-monotone graph parameter. We say that $\mathcal{F}$ is a \defn{container} for a graph parameter $\beta$ if $\phi_{\mathcal{F}}$ and $\beta$ are tied. 

Many minor-monotone graph parameters have a container\footnote{Not every minor-monotone graph parameter has a container.
For example, consider the parameter $\beta$ that counts the maximum number of vertices in a connected component of the input graph. The $n$-vertex star $K_{1,n-1}$ and the $n$-vertex path $P_n$ both have a $\beta$ value of $n$.
Thus, if $\mathcal{F}=(G_1,G_2,\dots)$ is a sequence such that $\phi_{\mathcal{F}}$ is tied to $\beta$, then since $\phi_{\mathcal{F}}(G_k)=k+1$ for every positive integer $k$, there is some $k$ such that $\beta(G_k)\geq 4$. 
Additionally, there is some $n$ such that both $P_n$ and $K_{1,n-1}$ contain $G_k$ as a minor.
However every connected minor of $K_{1,n-1}$ is a star and every connected minor of $P_n$ is a path, so no graph is a minor of both $P_n$ and $K_{1,n-1}$, and has a component with more than three vertices. Therefore no such sequence $\mathcal{F}$ exists.}. 
For example, the sequence of  $k\times k$-grid graphs is a container for treewidth, since the $k\times k$-grid has treewidth $k$, and the Grid Minor Theorem of Robertson and Seymour \cite{RS-V} says that every graph with sufficiently large treewidth contains the $k\times k$-grid as a minor (see \citep{CT19} for the best known bound). Similarly, the sequence of all complete binary trees is a container for pathwidth, since the complete binary tree of radius $r$ has pathwidth $\lceil \frac{r}{2}\rceil$ (see \citep{Scheffler89}), and \citet{RS-I} showed that for any tree $T$, every graph with sufficiently large pathwidth contains $T$ as a minor (see \citep{BRST91} for an optimal bound). Treedepth provides a third example: if $P_k$ is the $k$-vertex path, then $(P_1,P_2,\dots)$ is a container for treedepth, since \citet{Sparsity} showed that every graph with sufficiently large treedepth contains $P_k$ as a minor, and $\td(P_k)=\lceil \log_2 (k+1)\rceil$.

Given a sequence $\mathcal{F}=(G_1,G_2,\dots)$ of graphs, define $\mathcal{F}^{+2}$ to be the sequence $(G_1+\overline{K_2},G_2+\overline{K_2},\dots)$, where $G_i+\overline{K_2}$ is the graph obtained from $G_i$ by adding two non-adjacent vertices, each adjacent to every vertex of $G_i$.

\begin{lem}
\label{2BagLemma}
    For any graph $G$, the 2-bag tree-decomposition of $G+\overline{K_2}$ in which each bag consists of $V(G)$ together with one vertex not in $V(G)$ is refined.
\end{lem}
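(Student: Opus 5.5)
The plan is to apply the characterisation in \cref{thm:refinedcharacterisation}. Let $a$ and $b$ be the two non-adjacent vertices of $\overline{K_2}$. Let $T$ be the tree with two nodes $x$ and $y$ joined by an edge, and set $B_x:=V(G)\cup\{a\}$ and $B_y:=V(G)\cup\{b\}$. We first check that this is a tree-decomposition of $G+\overline{K_2}$. Every edge of $G$ lies in both bags. Every edge incident to $a$ has its other end in $V(G)$, so it lies in $B_x$; likewise every edge incident to $b$ lies in $B_y$. Each vertex of $G$ occurs in both bags, which form a connected subtree of $T$, while $a$ and $b$ each occur in exactly one bag.

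Next we check normality. We have $a\in B_x\setminus B_y$ and $b\in B_y\setminus B_x$, so neither bag is contained in the other, and the decomposition is normal. (This needs no assumption on $G$; if $V(G)=\emptyset$ the bags are $\{a\}$ and $\{b\}$, and the argument below still applies, as it would have to anyway.)

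By \cref{thm:refinedcharacterisation}, it now suffices to show the following: for each bag and each pair $v,w$ of vertices in that bag, there is a $v$--$w$ path with no internal vertex in the bag. By the symmetry swapping $a$ and $b$, it is enough to treat $B_x$. Take distinct $v,w\in B_x$ (if $v=w$, the trivial path works).
\begin{enumerate}[(i)]
\item If $v,w\in V(G)$, then $v\,b\,w$ is a path, since $b$ is adjacent to every vertex of $G$, and its only internal vertex $b$ is not in $B_x$.
\item If one of them is $a$, say $v=a$ and $w\in V(G)$, then $a$ and $w$ are adjacent in $G+\overline{K_2}$, so the single edge $aw$ is a path with no internal vertex.
\end{enumerate}
Since $a\notin B_y$ and $b\notin B_x$, no other pairs occur.

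I do not expect a real obstacle, since the argument is a direct check against \cref{thm:refinedcharacterisation}. The one point needing care is to confirm that the hypothesis of that theorem is exactly normality. Normality holds precisely because $a$ and $b$ are non-adjacent, so each of them lies in only one bag.
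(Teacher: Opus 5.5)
Your proof is correct and follows the paper's argument: apply \cref{thm:refinedcharacterisation}, route a pair of vertices of $G$ through the apex vertex of the other bag, and join the apex vertex in the bag to a vertex of $G$ by a single edge. You also check explicitly that the two bags form a valid and normal tree-decomposition, which the paper leaves implicit; note that non-adjacency of $a$ and $b$ is what makes the decomposition valid, whereas normality simply comes from $a\in B_x\setminus B_y$ and $b\in B_y\setminus B_x$.
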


\begin{proof}
Let $B_x$ and $B_y$ be the bags of this tree-decomposition, and let $\{v\}=B_x\setminus V(G)$ and $\{w\}=B_y\setminus V(G)$.
For any pair of vertices $v_1$ and $v_2$ in $B_x$, either $v_1wv_2$ is a path, or $v\in \{v_1,v_2\}$ and $v_1v_2$ is a path.
Thus there is a path from $v_1$ to $v_2$ with no internal vertex in $B_x$. 
By  symmetry, for any pair of vertices $w_1,w_2\in B_y$, there is a path from $w_1$ to $w_2$ with no internal vertex in $B_y$.
The result follows from \cref{thm:refinedcharacterisation}.
\end{proof}

For any integer $k\geq 2$, if $G$ is the $k\times k$ grid, then by \cref{2BagLemma}, $G+\overline{K_2}$ is a $K_7$-minor-free graph that has a refined tree-decomposition with a bag of treewidth at least $k$. 
In contrast, \citet[Theorem~28]{HW26} showed that for any proper minor-closed class $\GG$, in every atomic tree-decomposition of a graph in $\GG$, every bag has bounded treewidth. This example illustrates the distinction between atomic and refined tree-decompositions.

\subsection{Self-Similar and Robust Parameters}

For any graph $H$, we define a \defn{$(H,0)$-core} in a graph $G$ to be a set $S\subseteq V(G)$ such that there is a model of $H$ in $G$ whose branch-sets all intersect $S$. For $i\geq 1$, we recursively define a \defn{$(H,i)$-core} in $G$ to be a set $S\subseteq V(G)$ such that there is a model $(M_v:v\in V(H))$ of $H$ in $G$ such that $M_v\cap S$ is a $(H,i-1)$-core in $G$ for every $v\in V(H)$.
We say that a sequence of graphs $\mathcal{F}=(H_1,H_2,\dots)$ is \defn{self-similar} if $H_1\preccurlyeq H_2\preccurlyeq \dots$ and there is a function $f$ such that for all positive integers $k$ and $d$, the graph $H_{f(k,d)}$ has a $(H_k,d)$-core. 
We say that $\mathcal{F}$ is \defn{robust} if for every positive integer $k$ there is a positive integer $k'$ such that for every $v\in V(H_{k'})$ we have $H_k\preccurlyeq H_{k'}-v$. A parameter is \defn{robust} (or \defn{self-similar}) if it is tied to $\phi_{\mathcal{F}}$ for some robust (or self-similar) sequence $\mathcal{F}$.

The three sequences mentioned above (namely paths, complete binary trees, and grids) are robust and self-similar (see \cref{RobustSelfSimilarSquences}). Thus treedepth, pathwidth and treewidth are robust and self-similar. The next theorem is the main result of this section. It shows that for any robust self-similar sequence $\mathcal{F}$, graphs excluding a certain minor have tree-decompositions in which each bag has $\phi_{\mathcal{F}}$ bounded.

\begin{thm}\label{thm:selfsimilarcase}
    Given a robust and self-similar sequence $\mathcal{F}=(G_1,G_2,\dots)$ and a graph $F\in \mathcal{F}^{+2}$, there is a constant $c$ such that for any bag $B$ of any refined tree-decomposition of any $F$-minor-free graph $G$, we have $\phi_{\mathcal{F}}(G[B])\leq c$.
\end{thm}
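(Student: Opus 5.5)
We argue by contraposition. Write $F=G_k+\overline{K_2}$. We show that if $\phi_{\mathcal{F}}(G[B])$ is large for some bag $B$ of a refined tree-decomposition $\mathcal{D}$ of $G$, then $F\preccurlyeq G$. The main tool is \cref{lem:torso}. Work in $G^*:=G\llbracket B\rrbracket$, which is a minor of $G$. Property~\ref{torso1} says that every non-adjacent pair in $B$ has a common external neighbour. Property~\ref{torso2} says that no external vertex sees all of $B$.

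The plan has three steps, using self-similarity, robustness, and then an iteration.

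\textbf{Self-similarity.} Choose $k'$ (via robustness) and then $K:=f(k',d)$ for a suitable depth $d$. If $\phi_{\mathcal{F}}(G[B])>K$, then $G_K\preccurlyeq G[B]$. Since $G_K$ has a $(G_{k'},d)$-core, $G[B]$ contains a nested family of models of $G_{k'}$, $d$ levels deep.

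\textbf{Robustness.} The role of robustness is to absorb the deletion of a single vertex: if one vertex of a model of $G_{k'}$ is used up, a model of $G_k$ remains.

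\textbf{Finding the two apices.} Pick a model $(M_u:u\in V(G_{k'}))$ inside $B$. If there is a vertex $v\in B$ outside all branch sets, consider the external vertices adjacent to $v$. For each other vertex $w$ of the model that is non-adjacent to $v$, property~\ref{torso1} gives an external vertex adjacent to both $v$ and $w$. Contracting all external neighbours of $v$ into $v$ therefore makes $v$ complete to the model, exactly as in the proof of \cref{lem:torso}\ref{torso3}. This gives one apex.

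For the second apex, iterate. Let $x$ be an external vertex, or the contracted blob. The aim is to find a second vertex $v'$ of $B$, again outside the branch sets, and route its connections through external vertices disjoint from those used for $v$. The nesting depth $d$ is what lets us restrict to a deeper core $S$. We pick the apex vertices outside $S$ but inside $B$, and keep a sub-model living on $S$. Robustness handles the case where an apex is forced to lie inside a branch set: delete it and retain $G_k$.

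\textbf{Main obstacle.} The hard step is making the two apex constructions disjoint. The external vertices used to connect $v$ to the model may be the same components of $G-B$ needed for $v'$, and the two apices must also remain non-adjacent. Here is what I would try first. Use property~\ref{torso2} to choose $v'$ as a non-neighbour (in $G^*$) of a given external vertex. Then use the recursive core structure so that each level of nesting supplies fresh branch sets whose vertices pairwise need a common external vertex. A pigeonhole over the bounded number of ways each external vertex can attach to the $d$ levels should then give two disjoint families of external vertices, one for each apex.

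Finally, $c$ is $f(k',d)$ for the $d$ required by this pigeonhole, where $d$ depends only on $k$.
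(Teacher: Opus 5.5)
Your outline has the right ingredients: \cref{lem:torso}, robustness to discard the branch set containing an apex, and the nested cores. Your first-apex construction (contract all external neighbours of $v$ into $v$, as in the proof of property~\ref{torso3}) is the one the paper uses. But the proposal stops where the real work begins. The sentence ``a pigeonhole over the ways each external vertex can attach should give two disjoint families of external vertices'' is not an argument. There is also no reason it should succeed, since the external vertices joining one $B$-vertex to the model may be the only ones available for the other. Your worry that the apices must stay non-adjacent is spurious, since $G_k+\overline{K_2}\subseteq G_k+K_2$.

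The paper does not build both apices from $B$-vertices. It first shows that no external vertex $x$ of $G\llbracket B\rrbracket$ can have a neighbour in every branch set of some model of $G_{k'}$ in $G[B]$. Otherwise $x$ itself is one apex, and property~\ref{torso2} supplies $v\in B$ non-adjacent to $x$ as the other. Contracting all external neighbours of $v$ into $v$ never touches $x$, so disjointness is automatic, and robustness yields a $G_k$-model avoiding $v$.

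The idea you are missing is what to do once this case is excluded. There the nested cores are used for counting, not for locating apices. Let $m:=|V(G_{k'})|$. Then $d$ levels of nesting give $m^d$ disjoint innermost cores indexed by $V(G_{k'})^d$, each with a representative $u_{\mathbf{v}}\in B$.

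Suppose an external vertex $x$ saw representatives below every child of some prefix. Then the model at that node would lift to a model of $G_{k'}$ in $G[B]$ all of whose branch sets $x$ sees, which was just excluded. So $x$ misses a child at every node of this $m$-ary tree, and hence is adjacent to at most $(m-1)^d$ representatives.

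By property~\ref{torso1}, every non-adjacent pair of representatives has a common external neighbour. Contracting each external vertex into a well-chosen (greedy) neighbour therefore yields a minor on the representatives whose complement has at most $\tfrac12(m-1)^d$ times as many edges as the minor itself. Its average degree is then at least $(m^d-1)/(1+(m-1)^d)$, which tends to infinity with $d$, and Mader's theorem gives $F$. This density argument is what fixes $d$, and hence $c=f(k',d)$; nothing in your outline substitutes for it.
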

\begin{proof}
    Let $k_0$ be the integer such that $F=G_{k_0}+\overline{K_2}$. 
    \citet{Mader68} showed that (for any graph $F$) there is an integer $d_F$ such that every graph with average degree at least $d_F$ contains $F$ as a minor; see \cite{Kostochka82,Kostochka84,Thomason84,Thomason01,ReedWood16,TW22,MT05,NRTW20} for explicit bounds. Let $k_1$ be an integer such that $G_{k_0}\preccurlyeq G_{k_1}-v$ for all $v\in V(G_{k_1})$. 
    Let $d$ be a sufficiently large integer, to be computed later, and let $n$ be an integer such that $G_n$ contains a $(G_{k_1},d)$-core $S$ witnessed by a model $\mathcal{M}$ of $G_{k_1}$.
    For each $t \in \{1,\dots,d\}$ and each sequence $\mathbf{v}=(v_1,v_2,\dots, v_t)\in V(G_{k_1})^t$, we recursively construct a $(G_{k_1},d-t)$-core $S_{\mathbf{v}}$ and a model $\mathcal{M}_{\mathbf{v}}$ of $G_{k_1}$ as follows.
    If $t=1$, then for each $v_1\in V(G_{k_1})$ let $S_{(v_1)}$ be the intersection of $S$ and the branch-set of $\mathcal{M}$ corresponding to $v_1$, and let $\mathcal{M}_{(v_1)}$ be a model of $G_{k_1}$ witnessing that $S_{(v_1)}$ is a $(G_{k_1},d-1)$-core.
    If $t>1$, then for each $\mathbf{v}=(v_1,\dots, v_t)\in V(G_{k_1})^t$ let $S_{\mathbf{v}}$ be the intersection of $S_{(v_1,\dots,v_{t-1})}$ and the branch-set of $\mathcal{M}_{(v_1,\dots,v_{t-1})}$ corresponding to $v_t$, and let $\mathcal{M}_{\mathbf{v}}$ be a model of $G_{k_1}$ witnessing that $S_{\mathbf{v}}$ is a $(G_{k_1},d-t)$-core.
    By construction, $\mathcal{S}:=\{S_{\mathbf{v}}:\mathbf{v}\in V(G_{k_1})^d\}$ is a set of $|V(G_{k_1})|^d$ disjoint $(G_{k_1},0)$-cores in $G_n$.
    Choose a representative vertex $x_{\mathbf{v}}$ in $S_{\mathbf{v}}$ for each $\mathbf{v}\in V(G_{k_1})^d$.

    Let $G$ be an $F$-minor-free graph, and suppose for contradiction that there is a refined tree-decomposition of $G$ with a bag $B$ such that $G_n\preccurlyeq G[B]$.
    Let $G'=G\llbracket B\rrbracket$, and note that $G'$ is also $F$-minor-free since $G'\preccurlyeq G$.
    
    \begin{claim}
        There is no external vertex $x$ of $G'$ such that for some model $\mathcal{M}^*$ of $G_{k_1}$ in $G[B]$, the set $N_{G'}(x)$ contains a vertex of every branch-set of $\mathcal{M}^*$.
    \end{claim}
    \begin{proofclaim}
    By property \ref{torso2} in \cref{lem:torso}, there is some $v\in B\setminus N_{G'}(x)$.
    Let $G''$ be obtained from $G'$ by contracting every edge between $v$ and an external vertex of $G'$, and note that $B\setminus \{v\}\subseteq N_{G''}(v)$ by property \ref{torso1} in \cref{lem:torso}.
    By the choice of $k_1$, we have $G_{k_0}\preccurlyeq G_{k_1}-w$ for any $w\in V(G_{k_1})$.
    Hence there is a model $\mathcal{M}'$ of $G_{k_0}$ in $G[B]$ whose branch-sets are unions of branch-sets in $\mathcal{M}^*$ that do not contain $v$.
    We can now extend $\mathcal{M}'$ to a model of $F$ in $G''$ by adding the singleton branch-sets $\{x\}$ and $\{v\}$, contradicting that $G'$ is $F$-minor-free.
    \end{proofclaim}
    Let $\mathcal{M}^*:=(M^*_{v}:v\in V(G_n))$ be a model of $G_n$ in $G[B]$, and for each $\mathbf{v}\in V(G_{k_1})^d$ let $u_{\mathbf{v}}$ be a vertex in the branch-set $M^*_{x_{\mathbf{v}}}$.

    Now for an external vertex $x$ of $G'$, consider the set $V_x:=\{\mathbf{v}\in V(G_{k_1})^d: xu_{\mathbf{v}}\in E(G')\}$. 
    Suppose for contradiction that for some $t\in \{0,1,\dots,d-1\}$, some $(v_1,\dots,v_t)\in V(G_{k_1})^t$ and every $w\in V(G_{k_1})$ there exists $\mathbf{v}\in V_x$ with $(v_1,\dots, v_t,w)$ as a prefix.
    By construction $x_{\mathbf{v}}$ is in $S_{\mathbf{v}'}$ for every non-empty prefix $\mathbf{v}'$ of $\mathbf{v}$, and so for every branch-set $M_{(v_1,\dots,v_t),w}$ of $\mathcal{M}_{(v_1,\dots,v_t)}$ (where $\mathcal{M}_{\emptyset}:=\mathcal{M}$), there is a vertex $v\in M_{(v_1,\dots,v_t),w}$ such that $N_{G'}(x)$ contains a vertex of $M^*_v$. 
For each $w\in V(G_{k_1})$, let $M'_w:=\bigcup \{M^*_v:v\in M_{(v_1,\dots,v_t),w}\}$. 
So $\mathcal{M}':=(M'_w:w\in V(G_{k_1}))$ is a model of $G_{k_1}$ in $G[B]$. 
Moreover, $N_{G'}(x)$ contains a vertex of $M'_w$ for each $w\in V(G_{k_1})$, which contradicts the above claim. Hence for each $t\in \{0,1,\dots,d-1\}$ and each $(v_1,\dots,v_t)\in V(G_{k_1})^t$, there is at least one $w\in V(G_{k_1})$ such that $(v_1,\dots,v_t,w)$ is not a prefix of any $\mathbf{v}\in V_x$. 

    Now consider a vector $\mathbf{v}\in V(G_{k_1})^d$ chosen uniformly at random, and for each $i\in \{0,1,\dots,d\}$ let $\mathbf{v}_i$ be the prefix of $\mathbf{v}\in V(G_{k_1})^i$, and let $A_{x,i}$ be the event that $\mathbf{v}_i$ is a prefix of some $\mathbf{v}'\in V_x$.
    Since $V_x\subseteq V(G_{k_1})^d$, the event that $\mathbf{v}\in V_x$ corresponds to $A_{x,d}$. Hence $P(A_{x,d}) = |V_x| / |V(G_{k_1})|^d$. Unless it is equal to $0$, $P(A_{x,d})$ can also be written in terms of conditional probabilities as follows:
    \[P(A_{x,d})=P(A_{x,0})\cdot P(A_{x,1}\vert A_{x,0})\cdot P(A_{x,2}\vert A_{x,1})\cdot\dots \cdot P(A_{x,d}\vert A_{x,d-1}).\]
    Note that $P(A_{x,0})=1$ unless $V_x$ is empty.
    When $P(A_{x,d})\neq 0$ we have $V_x\neq \emptyset$, and by the previous paragraph, $P(A_{x,i}\vert A_{x,i-1})\leq (|V(G_{k_1})|-1)/|V(G_{k_1})|$ for each $i\in \{1,\dots, d\}$.
    Hence $P(A_{x,d})\leq (|V(G_{k_1})|-1)^d/|V(G_{k_1})|^d$, and so $|V_x|\leq (|V(G_{k_1})|-1)^d$.

    Let $X:=\{x_1,\dots,x_{\ell}\}$ be the set of external vertices of $G'$ and let $U:=\{u_{\mathbf{v}}: \mathbf{v}\in V(G_{k_1})^d\}$. 
    Let $H_0:=G'[X\cup U]$ and for each $i\in \{1,\dots, \ell\}$ let $H_i$ be a minor of $H_{i-1}$ with exactly one fewer vertex obtained as follows. If $x_i$ is isolated, then delete $x_i$. Otherwise, choose an edge incident to $x_i$ to contract in order to maximise the number of edges in the resulting graph.   
    Let $E_i$ be the set of edges of $H_i$ that are non-edges of $H_{i-1}$, and let $N_i$ be the set of non-edges in $H_i[N_{H_{i-1}}(x_i)]$.
    By the choice of $H_i$, the edges in $E_i$ form a star whose centre is a vertex of maximum degree in the graph induced by $E_i\cup N_i$.
    Thus $|N_i|\leq \frac{1}{2}|E_i|\cdot |N_{H_{i-1}}(x_i)|=\frac{1}{2}|E_i|\cdot |V_{x_i}|\leq \frac{1}{2}|E_i|\cdot (|V(G_{k_1})|-1)^d$.
    Now $H_\ell$ is a minor of $G'$ rooted on $U$ with $\sum_{i=1}^\ell |E_i|$ edges whose complement $\overline{H_\ell}$ has at most $\sum_{i=1}^\ell |N_i|$ edges. 
    Thus $|E(\overline{H_\ell})|\leq \frac{1}{2}(|V(G_{k_1})|-1)^d |E(H_\ell)|$.
    This means that $H_\ell$ is a graph on $|U|=|V(G_{k_1})|^d$ vertices with average degree at least $(|V(G_{k_1})|^d-1)/(1+(|V(G_{k_1})|-1)^d)$. 
    For sufficiently large $d$, $H_\ell$ has average degree greater than $d_F$, implying $F\preccurlyeq H_\ell \preccurlyeq G'\preccurlyeq G$, which is the desired contradiction.
    \end{proof}

\cref{thm:selfsimilarcase} and \cref{2BagLemma} together imply the following general result, which is the main result of this section.

\begin{cor}\label{mainextcor}
 Given a robust and self-similar sequence $\mathcal{F}=(G_1,G_2,\dots)$ and a minor-closed graph class $\mathcal{C}$, the following statements are equivalent.
 \begin{enumerate}[(A)]
    \item\label{1stform} There is a constant $c$ such that for any bag $B$ of any refined tree-decomposition of any $G\in\mathcal{C}$, we have $\phi_{\mathcal{F}}(G[B])\leq c$.
    \item\label{2ndform} $\mathcal{C}$ does not contain every graph in $\mathcal{F}^{+2}$.
 \end{enumerate}
\end{cor}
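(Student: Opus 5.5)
The two implications are proved separately. For \ref{2ndform}$\Rightarrow$\ref{1stform}, we use \ref{2ndform} to pick a graph $F\in\mathcal{F}^{+2}$ with $F\notin\mathcal{C}$. Since $\mathcal{C}$ is minor-closed, every $G\in\mathcal{C}$ is $F$-minor-free: if $F\preccurlyeq G$ then $F\in\mathcal{C}$. \cref{thm:selfsimilarcase} then gives a constant $c$, depending only on $\mathcal{F}$ and $F$, such that $\phi_{\mathcal{F}}(G[B])\leq c$ for every bag $B$ of every refined tree-decomposition of every $G\in\mathcal{C}$. This is exactly \ref{1stform}. This direction is immediate once \cref{thm:selfsimilarcase} is available.

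For \ref{1stform}$\Rightarrow$\ref{2ndform} we argue by contrapositive. Suppose $\mathcal{C}$ contains every graph $G_i+\overline{K_2}$ for $i\geq1$. Given any $c$, consider $G:=G_{c}+\overline{K_2}\in\mathcal{C}$. By \cref{2BagLemma}, the two-bag tree-decomposition of $G$ with bags $V(G_c)\cup\{v\}$ and $V(G_c)\cup\{w\}$ is refined. The bag $B=V(G_c)\cup\{v\}$ satisfies $G_c\preccurlyeq G[B]$. By definition, $\phi_{\mathcal{F}}(H)$ is the least $k$ with $G_k\not\preccurlyeq H$. Since the sequence is minor-increasing ($G_i\preccurlyeq G_c$ for all $i\leq c$), we get $\phi_{\mathcal{F}}(G[B])\geq c+1>c$. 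As $c$ was arbitrary, no constant bound exists, so \ref{1stform} fails.

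The one point to check is that the decomposition in \cref{2BagLemma} is normal, since refined is defined to include normality. The bags are distinct because $v\neq w$, and neither contains the other, so normality holds. This should be the only place needing care; the main obstacle of the result is entirely absorbed by \cref{thm:selfsimilarcase}.
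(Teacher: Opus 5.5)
Your proposal is correct and matches the paper's proof. The direction (B) $\Rightarrow$ (A) follows from \cref{thm:selfsimilarcase} together with minor-closedness, and the converse uses the refined two-bag decomposition of $G_c+\overline{K_2}$ from \cref{2BagLemma}, whose bags contain $G_c$ and so have $\phi_{\mathcal{F}}>c$; your explicit normality check, which \cref{thm:refinedcharacterisation} presupposes, is a detail the paper leaves implicit.
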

\begin{proof}
    It follows immediately from \cref{thm:selfsimilarcase} that \ref{2ndform} implies \ref{1stform}.
    Suppose that $\mathcal{C}$ contains every graph in $\mathcal{F}^{+2}$.
    Pick any positive integer $c$, and consider the refined tree-decomposition $\mathcal{D}$ of $G_{c}+\overline{K_2}$ guaranteed by \cref{2BagLemma}.
    For each bag $B$ of $\mathcal{D}$, $G_c\subseteq (G_{c}+\overline{K_2})[B]$, so by definition $\phi_{\mathcal{F}}((G_{c}+\overline{K_2})[B])> c$.
    Thus \ref{1stform} implies \ref{2ndform}.
\end{proof}

The following simple observation will be useful in analysing $(H,d)$-cores.

\begin{obs}\label{obs:ss}
    Let $H,G',G$ be graphs such that $H \preccurlyeq G' \preccurlyeq G$. If $S$ is an $(H,d)$-core in $G'$, $M:=(M_v:v\in V(G'))$ is a model of $G'$ in $G$ and $S^*\subseteq V(G)$ is a superset of $\bigcup \{M_v:v\in S\}$, then $S^*$ is an $(H,d)$-core in~$G$.
\end{obs}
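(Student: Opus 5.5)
The plan is to unwind the recursive definition of a core and induct on $d$, transporting models from $G'$ to $G$ through $M$.

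First, I will record a basic composition fact. Suppose $(N_u:u\in V(H))$ is a model of $H$ in $G'$. Set $N'_u:=\bigcup\{M_v:v\in N_u\}$. Then $(N'_u:u\in V(H))$ is a model of $H$ in $G$, and I will check the three required properties in turn. The sets $N'_u$ are pairwise disjoint because the $N_u$ are pairwise disjoint and the $M_v$ are pairwise disjoint. Each $G[N'_u]$ is connected: every $G[M_v]$ is connected, and each edge $vv'$ of $G'[N_u]$ yields an edge of $G$ between $M_v$ and $M_{v'}$. Finally, an edge of $G'$ between $N_u$ and $N_{u'}$ yields an edge of $G$ between $N'_u$ and $N'_{u'}$.

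Next, I will prove the statement by induction on $d$, for all sets $S$ and all supersets $S^*$ of $\bigcup\{M_v:v\in S\}$.

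\emph{Base case $d=0$.} Take a model $(N_u)$ of $H$ in $G'$ with $N_u\cap S\neq\emptyset$ for every $u$. Choose $v\in N_u\cap S$. Then $M_v\subseteq N'_u$, and also $M_v\subseteq S^*$. Since $M_v$ is nonempty (every branch-set of a model is nonempty), each $N'_u$ meets $S^*$. So $S^*$ is an $(H,0)$-core in $G$.

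\emph{Inductive step $d\geq 1$.} Take a model $(N_u)$ of $H$ in $G'$ such that each $N_u\cap S$ is an $(H,d-1)$-core in $G'$. Use the composed model $(N'_u)$ in $G$. For each $u$, the set $N'_u\cap S^*$ contains $\bigcup\{M_v:v\in N_u\cap S\}$, because
\[
M_v\subseteq N'_u\cap S^*\quad\text{whenever } v\in N_u\cap S .
\]
Apply the induction hypothesis with the set $N_u\cap S$ and the superset $N'_u\cap S^*$. It gives that $N'_u\cap S^*$ is an $(H,d-1)$-core in $G$. Hence $S^*$ is an $(H,d)$-core in $G$.

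There is no real obstacle. The only point needing care is that the induction must be stated for arbitrary supersets, so that the intersection with $N'_u$ still qualifies as a superset at the next level.
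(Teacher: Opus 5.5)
Your proposal is correct and follows essentially the same route as the paper: induction on $d$ (the paper phrases it as a minimal counterexample), composing the witnessing model of $H$ in $G'$ with $M$, and applying the hypothesis to $N'_u\cap S^*$ as a superset of $\bigcup\{M_v:v\in N_u\cap S\}$. You also explicitly check that the composed collection is a model of $H$ in $G$, which the paper leaves implicit.
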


\begin{proof}
    Assume the contrary and consider a counterexample minimising $d$.
    Let $\mathcal{M}':=(M'_v:v\in V(H))$ be a model of $H$ in $G'$ that witnesses that $S$ is an $(H,d)$-core, and for each $v\in V(H)$ let $M^*_v:=\bigcup \{M_w:w\in M'_v\}$.
    
    Suppose that $d=0$, and consider an arbitrary $v\in V(H)$. Since $\mathcal{M}'$ witnesses that $S$ is an $(H,0)$-core in $G'$, there is a vertex $s$ in $M'_v\cap S$, and so $M_s\subseteq M^*_v\cap S^*$. 
    Thus $\mathcal{M}^*$ witnesses that $S^*$ is a $(H,0)$-core.

    Now suppose that $d\geq 1$.
    Consider an arbitrary $v\in V(H)$. Since $\mathcal{M}'$ witnesses that $S$ is an $(H,d)$-core in $G'$, we have that $S\cap M'_v$ is an $(H,d-1)$-core in $G'$.
    Since our counterexample minimises $d$, any subset of $V(G)$ containing $\bigcup \{M_v:v\in S\cap M'_v\}$ is an $(H,d-1)$-core in $G$.
    By definition, $S^*\cap M^*_v\supseteq \bigcup \{M_v:v\in S\cap M'_v\}$, and so $S^*$ is an $(H,d)$-core in $G$.
\end{proof}

    \begin{lem}\label{lem:nestedmodels}
    Let $\mathcal{F}:=(G_1,G_2,\dots)$ be a sequence of graphs such that $G_i\preccurlyeq G_{i+1}$ for all $i$, and for every pair of positive integers $i$ and $j$ there is an integer $g(i,j)$ such that $G_{g(i,j)}$ contains a model of $G_i$ whose branch-sets each induce subgraphs that contain $G_j$ as a minor.
    Then $\mathcal{F}$ is self-similar.
    \end{lem}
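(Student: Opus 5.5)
We construct the function $f$ with $G_{f(k,d)}$ containing a $(G_k,d)$-core by induction on $d$, iterating the hypothesis $g$. Set $f(k,0):=k$: in $G_k$ itself the set $S=V(G_k)$ is a $(G_k,0)$-core, witnessed by the trivial model with singleton branch-sets. For $d\geq 1$, suppose $f(k,d-1)$ is defined and let $m:=f(k,d-1)$, so $G_m$ contains a $(G_k,d-1)$-core $S_{m}$. Define $f(k,d):=g(k,m)$. By hypothesis $G_{f(k,d)}$ contains a model $(M_v:v\in V(G_k))$ of $G_k$ such that each $G_{f(k,d)}[M_v]$ contains $G_m$ as a minor; fix a model $\mathcal{N}_v=(N_{v,u}:u\in V(G_m))$ of $G_m$ inside $G_{f(k,d)}[M_v]$.

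For each $v\in V(G_k)$ let $S_v:=\bigcup\{N_{v,u}:u\in S_m\}\subseteq M_v$, and let $S:=\bigcup_v S_v$. Since $G_k\preccurlyeq G_m\preccurlyeq G_{f(k,d)}$ and $\mathcal{N}_v$ is a model of $G_m$ in $G_{f(k,d)}$, \cref{obs:ss} shows that $S_v$ is a $(G_k,d-1)$-core in $G_{f(k,d)}$. Because the $M_v$ are pairwise disjoint and $S_v\subseteq M_v$, we have $M_v\cap S=S_v$. Hence the model $(M_v:v\in V(G_k))$ witnesses that $S$ is a $(G_k,d)$-core in $G_{f(k,d)}$, completing the induction. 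The chain condition $G_1\preccurlyeq G_2\preccurlyeq\cdots$ required by the definition of self-similar is given by hypothesis.

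The only delicate point is the equality $M_v\cap S=S_v$. It depends on $\mathcal{N}_v$ living inside $G[M_v]$, so that its branch-sets lie in $M_v$, and on disjointness across different $v$. A second subtlety is that the definition of core places no monotonicity requirement on subsets, so we must use exactly $S_v$ rather than a superset. \cref{obs:ss} nonetheless allows supersets, which resolves any mismatch. Minor-monotonicity of minors within induced subgraphs handles the rest.
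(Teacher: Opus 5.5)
Your proof is correct and is essentially the paper's argument: the same induction on $d$ with $f(k,0):=k$ and $f(k,d):=g(k,f(k,d-1))$, placing a $(G_k,d-1)$-core inside each branch-set $M_v$ and taking their union. You make explicit two steps the paper leaves implicit: transferring the core from $G_{f(k,d-1)}$ into $G_{f(k,d)}$ via \cref{obs:ss}, and the identity $M_v\cap S=S_v$, which follows from the branch-sets being disjoint.
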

    \begin{proof}
    Given any positive integer $k$, we show by induction on $d$ that there is an integer $f(k,d)$ such that $G_{f(k,d)}$ has a $(G_k,d)$-core.
    A graph has a $(G_k,0)$-core if and only if it contains $G_k$ as a minor, so we may set $f(k,0):=k$.
    Now suppose $d\geq 1$ and $f(k,d-1)$ exists, and set $f(k,d):=g(k,f(k,d-1))$.
    By definition, $G_{f(k,d)}$ contains a model $(M_v:v\in V(G_k))$ of $G_k$ whose branch-sets each induce graphs that contain $G_{f(k,d-1)}$ as a minor.
    Now for each $v\in V(G_k)$ there is a $(G_k,d-1)$-core $S_v$ in $G[M_v]$, which is also a $(G_k,d-1)$-core in $G$.
    It follows that $\bigcup \{S_v:v\in V(G_k)\}$ is a $(G_k,d)$-core, as required.
    \end{proof}

    \begin{thm}\label{RobustSelfSimilarSquences}
    Each of the following is a robust, self-similar sequence:
    \begin{enumerate}
        \item $\mathcal{F}_1:=(P_1,P_2,\dots)$, where $P_i$ is the $i$-vertex path,
        \item $\mathcal{F}_2:=(T_1, T_2,\dots)$, where $T_i$ is the complete binary tree of radius $i$,
        \item $\mathcal{F}_3:=(P_1\square P_1,P_2\square P_2,\dots )$,
        \item $\mathcal{F}_4:=(C_3\square C_3, C_4\square C_4, \dots)$
    \end{enumerate}
    \end{thm}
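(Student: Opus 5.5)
For each sequence we must check three things: the chain $G_1\preccurlyeq G_2\preccurlyeq\cdots$, robustness, and self-similarity. Self-similarity will come from \cref{lem:nestedmodels}. So in each case we only need to exhibit, for given $i,j$, an index $g(i,j)$ such that $G_{g(i,j)}$ contains a model of $G_i$ whose branch-sets each induce a subgraph containing $G_j$ as a minor. The monotone chain conditions are immediate: subpaths, subtrees and subgrids, with $C_k\square C_k\preccurlyeq C_{k+1}\square C_{k+1}$ obtained by contracting one edge of each cycle factor.

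For paths, $P_{ij}$ splits into $i$ consecutive subpaths of $j$ vertices each. These form a model of $P_i$ whose branch-sets are copies of $P_j$. For robustness take $k'=2k$: deleting any vertex of $P_{2k}$ leaves a component with at least $k$ vertices, and this component contains $P_k$.

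For complete binary trees, replace each node of $T_i$ by a copy of $T_j$ to get a tree inside $T_{g}$ with $g=(i+1)(j+1)$. Concretely, the subtree of $T_g$ of depth $j$ rooted at the root forms the root branch-set. Each of its leaves has two children, and we use one child of two distinct leaves to start the two child branch-sets, iterating down the levels. A branch-set is then a depth-$j$ complete binary subtree, which contains $T_j$. It is adjacent to its parent branch-set through the connecting edge, and we may absorb connecting paths into the child branch-set so that branch-sets stay connected. Adjacency is what matters here, and the depth budget $(i+1)(j+1)$ suffices. For robustness take $k'=k+1$: deleting a vertex $v$ of $T_{k+1}$ leaves one of the two subtrees hanging from the root untouched, or, if $v$ is the root, both. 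Either way a copy of $T_k$ remains.

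For grids, $P_{ij}\square P_{ij}$ partitions into $i\times i$ blocks, each a $j\times j$ grid. Adjacent blocks are joined by edges, so the blocks form a model of $P_i\square P_i$ with branch-sets isomorphic to $P_j\square P_j$. For robustness, $P_{2k+1}\square P_{2k+1}$ minus any vertex still contains a $k\times k$ subgrid in one quadrant avoiding that vertex; indeed $P_{k+1}\square P_{k+1}$ minus a vertex contains $P_k\square P_k$ by choosing a row and a column to avoid. For toroidal grids, the blocks of $C_{ij}\square C_{ij}$ partition it into $j\times j$ planar grids arranged toroidally, giving a model of $C_i\square C_i$ for $i\geq3$. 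Each block contains $P_j\square P_j$, and $P_j\square P_j$ in turn contains $C_{j'}\square C_{j'}$ with $j'$ of order $j/2$. This last step is the nontrivial one: we route the wrap-around edges of the torus inside a planar grid, for example by folding each cycle as a zigzag within a $2\times m$ strip. So $g(i,j)=O(ij)$ works. For robustness, $C_{k'}\square C_{k'}$ minus a vertex contains a large planar grid, which contains $C_k\square C_k$ by the same folding argument. The folding embedding of $C_m\square C_m$ into a planar grid of side $O(m)$ is the step I expect to require the most care to write down. I would realise it via the product structure: $C_m$ is a minor of $P_2\square P_{m}$, and the grid minor relation is preserved under Cartesian products of minors.
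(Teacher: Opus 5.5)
Your arguments for $\mathcal{F}_1$, $\mathcal{F}_2$ and $\mathcal{F}_3$ are correct and essentially match the paper: nested block or level decompositions are fed into \cref{lem:nestedmodels}, and robustness follows from disjoint copies surviving a vertex deletion.

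The gap is in $\mathcal{F}_4$. Your claim that $P_j\square P_j$ contains $C_{j'}\square C_{j'}$ as a minor is false for every $j'\geq 3$. The toroidal grid $C_m\square C_m$ is non-planar for every $m\geq 3$; for instance, for $m\geq 4$ it is triangle-free with $m^2$ vertices and $2m^2>2m^2-4$ edges. Minors of planar graphs are planar, so no planar grid of any size contains it. The folding argument fails at its last step. From $C_m\preccurlyeq P_2\square P_m$ and monotonicity of $\square$ under minors you only get $C_m\square C_m\preccurlyeq (P_2\square P_m)\square(P_2\square P_m)$, which is a four-dimensional grid, not a planar one. Both your self-similarity argument and your robustness argument for $\mathcal{F}_4$ rest on this claim, so both fail as written.

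The self-similarity part cannot be repaired within your strategy, because \cref{lem:nestedmodels} does not apply to $\mathcal{F}_4$ at all. Every $C_g\square C_g$ embeds in the torus, and orientable genus is minor-monotone and additive over components. Hence a toroidal graph cannot contain two vertex-disjoint subgraphs each having a non-planar minor. So no model of $C_i\square C_i$ in $C_g\square C_g$ has even two branch-sets whose induced subgraphs contain $C_j\square C_j$.

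The missing idea is that an $(H,d)$-core only needs $M_v\cap S$ to be an $(H,d-1)$-core \emph{in $G$}. The witnessing sub-models may therefore use vertices far outside $M_v$. The paper uses this directly, proving by induction on $t$ that every set of vertices in $(d+2)^t$ consecutive rows and $(d+2)^t$ consecutive columns of $C_{\ell+2}\square C_{\ell+2}$ is a $(C_{d+2}\square C_{d+2},t-1)$-core.
\begin{itemize}
\item For $t=1$, the row cycles and column cycles through the block form a subdivision of $C_{d+2}\square C_{d+2}$ in which every branch-set meets the block.
\item For $t\geq 2$, the $(d+2)^2$ sub-blocks form a model of $P_{d+2}\square P_{d+2}$. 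Using the wrap-around parts of the rows and columns, this extends to a model of $C_{d+2}\square C_{d+2}$ in the whole torus whose branch-sets contain the sub-blocks. Induction together with \cref{obs:ss} then finishes the step.
\end{itemize}
Robustness is also easy once you stay on the torus. For a vertex $v$ in row $a$ and column $b$ of $C_{m+1}\square C_{m+1}$, take the row cycles other than row $a$ and the column cycles other than column $b$. Together they avoid $v$ and form a subdivision of $C_m\square C_m$.
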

    \begin{proof}
        Let $k$ be a positive integer.
        Note that $P_{2k}$ contains two vertex disjoint copies of $P_k$. Thus for any $v\in V(P_{2k})$, $P_k$ is a minor of $P_{2k}-v$. Hence $\mathcal{F}_1$ is robust.
        Likewise $T_{k+1}$ contains two vertex disjoint copies of $T_k$ and $P_{2k}\square P_{2k}$ contains two vertex disjoint copies of $P_k\square P_k$. Hence $\mathcal{F}_1$, $\mathcal{F}_2$ and $\mathcal{F}_3$ are robust.

        Given any pair of positive integers $i$ and $j$, it is easy to see that there is a model of $P_i$ in $P_{ij}$ whose branch-sets each induce subgraphs isomorphic to $P_j$.
        Likewise, there is a model of $T_i$ in $T_{ij}$ whose branch-sets each induce subgraphs isomorphic to $T_j$ and there is a model of $P_i\square P_i$ in $P_{ij} \square P_{ij}$ whose branch-sets each induce subgraphs isomorphic to $P_j\square P_j$.
        Hence $\mathcal{F}_1, \mathcal{F}_2$ and $\mathcal{F}_3$ are self-similar by \cref{lem:nestedmodels}.

        Let $V(C_{k+3}):=\{w_1,\dots, w_{k+3}\}$, and consider an arbitrary vertex $(w_i,w_j)\in V(C_{k+3}\square C_{k+3})$.
        Note that for each $\ell\in \{1,\dots ,k+3\}$         the graphs 
        \begin{align*}
            &O_{\ell}:=(C_{k+3}\square C_{k+3})[\{(w_{\ell},w_1),\dots ,(w_{\ell},w_{k+3})\}] \text{ and } \\
            &O^{\ell}:=(C_{k+3}\square C_{k+3})[\{(w_1,w_{\ell}),\dots, (w_{k+3},w_{\ell})\}]
        \end{align*}
        
        are both cycles, and that 
        \[\bigcup\{O_{\ell}:\ell \in \{1,\dots,k+3\}/ \{i\}\}\cup \bigcup\{O^{\ell}:\ell \in \{1,\dots,k+3\}/\{j\}\}\] 
        is isomorphic to a subdivision of $C_{k+2}\square C_{k+2}$.
        Hence $C_{k+2}\square C_{k+2}$ is a minor of $C_{k+3}\square C_{k+3}-(w_i,w_j)$, and so $\mathcal{F}_4$ is robust.

        Given positive integers $d$ and $t$ and $\ell$ with $(d+2)^t\leq \ell+2$, let $\mathcal{S}_{t,d,\ell}$ be the set of subsets of $V(C_{\ell+2}\square C_{\ell+2})$ of the form $\{(w_i,w_j):i\in I,j\in I'\}$, where $I$ and $I'$ are both intervals of $(d+2)^t$ consecutive integers in $\{1,\dots,\ell+2\}$.
        To prove that $\mathcal{F}_4$ is self-similar, we show by induction on $t$ that every set in $\mathcal{S}_{t,d,\ell}$ is a 
        $(C_{d+2}\square C_{d+2},t-1)$-core in $C_{\ell+2}\square C_{\ell+2}$.
        Let $I$ and $I'$ be intervals of $(d+2)^t$ consecutive integers in $\{1,\dots,\ell+2\}$, and let $S:=\{(w_i,w_j):i\in I,j\in I'\}$.

        When $t=1$, the graph $H:=\bigcup \{O_{i}:i\in I\}\cup \bigcup \{O^i:i\in I'\}$ is a subdivision of $C_{d+2}\square C_{d+2}$, and there is a model of $C_{d+2}\square C_{d+2}$ in $H$ such that every branch-set contains exactly one vertex of $S$. Thus every set in $\mathcal{S}_{1,d,\ell}$ is a $(C_{d+2}\square C_{d+2},0)$-core.

        When $t\geq 2$, $S$ induces a subgraph isomorphic to $P_{(d+2)^t}\square P_{(d+2)^t}$ that contains a model $\mathcal{M}$ of $P_{d+2}\square P_{d+2}$ whose branch-sets are all in $\mathcal{S}_{t-1,d,\ell}$.
        It is straightforward to find a model $\mathcal{M}'$ of $C_{d+2}\square C_{d+2}$ in $C_{\ell+2}\square C_{\ell+2}$ whose branch-sets are all supersets of branch-sets of $\mathcal{M}$.
        By our inductive hypothesis and \cref{obs:ss}, $S$ is a $(C_{d+2}\square C_{d+2},t-1)$-core, and so every set in $\mathcal{S}_{t,d,\ell}$ is a $(C_{d+2}\square C_{d+2},t-1)$-core, as required.
    \end{proof}
    
We now give several examples of the above machinery. 

\begin{thm}
    \label{DoubleApexPath}
    Given a minor-closed class $\mathcal{C}$, the following statements are equivalent.
    \begin{enumerate}
    \item\label{1stformtd} There is a constant $c$ such that for any bag $B$ of any refined tree-decomposition of any $G\in\mathcal{C}$, we have $\td(G[B])\leq c$.
    \item\label{2ndformtd} There is a path $P$ such that $P+\overline{K_2}$ is not in $\mathcal{C}$.
    \end{enumerate}
\end{thm}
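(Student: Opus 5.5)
This is \cref{mainextcor} applied to the path sequence $\mathcal{F}_1=(P_1,P_2,\dots)$, once we check that treedepth and $\phi_{\mathcal{F}_1}$ are tied. By \cref{RobustSelfSimilarSquences}, $\mathcal{F}_1$ is robust and self-similar. Also $\mathcal{F}_1^{+2}=(P_1+\overline{K_2},P_2+\overline{K_2},\dots)$.

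The key observation is that \ref{2ndformtd} is literally statement \ref{2ndform} of \cref{mainextcor} for $\mathcal{F}_1$: ``$\mathcal{C}$ does not contain every graph in $\mathcal{F}_1^{+2}$.'' So the only work is to transfer statement \ref{1stform}, which bounds $\phi_{\mathcal{F}_1}$, into statement \ref{1stformtd}, which bounds $\td$.

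For the tie, recall that $\phi_{\mathcal{F}_1}(H)$ is the least $k$ such that $P_k\not\preccurlyeq H$.
\begin{itemize}
\item If $\td(H)$ is large, then $H$ contains a long path as a minor (as recalled from \citet{Sparsity}), so $\phi_{\mathcal{F}_1}(H)$ is large.
\item Conversely, if $P_k\not\preccurlyeq H$, then $H$ has no path on $k$ vertices, since a path subgraph is a path minor. The standard DFS-tree argument then gives $\td(H)\leq k-1$: every graph without a $k$-vertex path has treedepth less than $k$, because a DFS tree has depth below $k$ and its closure contains $H$.
\end{itemize}
Hence there is a function $f$ with $\td(H)\leq f(\phi_{\mathcal{F}_1}(H))$ and $\phi_{\mathcal{F}_1}(H)\leq f(\td(H))$ for every graph $H$.

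Now apply this to each bag $B$ of each refined tree-decomposition of each $G\in\mathcal{C}$.
\begin{itemize}
\item If \ref{2ndformtd} holds, \cref{mainextcor} gives $\phi_{\mathcal{F}_1}(G[B])\leq c$, hence $\td(G[B])\leq f(c)$, which is \ref{1stformtd}.
\item If \ref{1stformtd} holds with constant $c$, then $\phi_{\mathcal{F}_1}(G[B])\leq f(c)$ for all such bags. \cref{mainextcor} then says $\mathcal{C}$ misses some $P_k+\overline{K_2}$, which is \ref{2ndformtd}.
\end{itemize}
One could also prove this direction directly from \cref{2BagLemma}: if $\mathcal{C}$ contained every $P_k+\overline{K_2}$, the refined 2-bag decomposition of $P_k+\overline{K_2}$ has a bag containing $P_k$, whose treedepth is unbounded in $k$.

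None of these steps is a real obstacle. The only point needing care is making the tie explicit in both directions. I would cite the path/treedepth relationship as standard rather than reprove it.
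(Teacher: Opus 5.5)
\textbf{Gap in the tie.} Your route is the paper's: apply \cref{mainextcor} to the path sequence and check that $\phi_{\mathcal{F}_1}$ and $\td$ are tied. However, your two bullets prove the same inequality twice. The first bullet says large treedepth forces a long path minor. The second, labelled ``conversely'', says $P_k\not\preccurlyeq H$ implies $\td(H)\le k-1$. Both state $\td(H)\le \phi_{\mathcal{F}_1}(H)-1$, one as the contrapositive of the other. Nothing you wrote shows the reverse bound $\phi_{\mathcal{F}_1}(H)\le f(\td(H))$, that is, that a long path minor forces large treedepth. Yet that is exactly what you use in the step ``if (1) holds with constant $c$, then $\phi_{\mathcal{F}_1}(G[B])\le f(c)$''. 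So your main-line argument for (1)$\Rightarrow$(2) through \cref{mainextcor} is unsupported, and the sentence ``Hence there is a function $f$ with \dots'' does not follow from the two bullets.

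\textbf{The fix.} The missing fact is the one the paper cites from \citet{Sparsity}. Treedepth is minor-monotone and $\td(P_n)=\lceil\log_2(n+1)\rceil$. So if $\phi_{\mathcal{F}_1}(H)=n\ge 2$, then $P_{n-1}\preccurlyeq H$ and $\td(H)\ge\lceil\log_2 n\rceil$, which gives $\phi_{\mathcal{F}_1}(H)\le 2^{\td(H)}$. Your closing remark via \cref{2BagLemma} does rescue (1)$\Rightarrow$(2). That works only because ``the treedepth of $P_k$ is unbounded in $k$'', together with subgraph-monotonicity of treedepth, is precisely this missing fact. State it explicitly and correct the claim that your bullets give a two-sided tie. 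With that change the proof is complete and matches the paper's.
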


\begin{proof}
    By part 1 of \cref{RobustSelfSimilarSquences}, the sequence $\mathcal{F}:=(P_1,P_2,\dots )$, $P_i$ is the $i$-vertex path, is robust and self-similar. Thus, by \cref{mainextcor} it suffices to show that $\mathcal{F}$ is a container for treedepth.
    By a result of \citet[Proposition 6.1]{Sparsity}, for every positive integer $n$ the treedepth of a graph with no $P_n$ minor is at most $n-1$.
    Thus for every graph $G$ with $\phi_{\mathcal{F}}(G)=n$, we have $\td(G)\leq n-1$.
    \citet{Sparsity} also showed that every graph containing $P_n$ as a minor has treedepth at least $\lceil \log_2(n+1)\rceil$.
    Thus treedepth is tied to $\phi_{\mathcal{F}}$, as required.
\end{proof}

Note that for a path $P$, there is a straight line drawing of $P+\overline{K_2}$ in $\mathbb{R}^2$ with the vertices of $P$ on the positive $x$-axis and the other two vertices embedded at $(0,1)$ and $(0,-1)$.
In particular, $P+\overline{K_2}$ is planar, and so there is an integer $c$ such that every $(P+\overline{K_2})$-minor-free graph has treewidth at most $c$ (by (1.5) in \cite{RS-V}).
This trivialises the above result for optimal tree-decompositions, since the number of vertices of a graph is an upper bound for its treedepth. The strength of \cref{DoubleApexPath} is that it applies to all refined tree-decompositions of a $(P+\overline{K_2})$-minor-free graph $G$, which can have width much larger than $\tw(G)$.

\begin{thm}
\label{DoubleApexForest}
    Given a minor-closed class $\mathcal{C}$, the following statements are equivalent.
    \begin{enumerate}
    \item\label{1stformpw} There is a constant $c$ such that for any bag $B$ of any refined tree-decomposition of any $G\in\mathcal{C}$, we have $\pw(G[B])\leq c$.
    \item\label{2ndformpw} There is a complete binary tree $T$ such that $T+\overline{K_2}$ is not in $\mathcal{C}$.
    \end{enumerate}
\end{thm}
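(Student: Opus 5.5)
This follows the proof of \cref{DoubleApexPath}, with paths replaced by complete binary trees. By part 2 of \cref{RobustSelfSimilarSquences}, the sequence $\mathcal{F}_2=(T_1,T_2,\dots)$ of complete binary trees is robust and self-similar. By \cref{mainextcor}, statement (A) there, applied with $\phi_{\mathcal{F}_2}$, is equivalent to statement (B): $\mathcal{C}$ does not contain every $T_i+\overline{K_2}$. Since $\mathcal{C}$ is minor-closed and the $T_i$ form a minor-increasing sequence in which every complete binary tree appears, (B) is exactly statement~\ref{2ndformpw}. So the remaining task is to show that $\mathcal{F}_2$ is a container for pathwidth, i.e.\ that $\pw$ and $\phi_{\mathcal{F}_2}$ are tied. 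Given that, bounded $\phi_{\mathcal{F}_2}$ on bags is equivalent to bounded pathwidth on bags, which is statement~\ref{1stformpw}.

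For the first direction, suppose $\phi_{\mathcal{F}_2}(G)=n$. Then $T_n$ is not a minor of $G$. By the excluded-forest theorem of \citet{RS-I} (with the optimal bound of \citep{BRST91}), every graph with no $T_n$ minor has pathwidth bounded by a function of $|V(T_n)|$, namely at most $|V(T_n)|-2$. Hence $\pw(G)\leq f(\phi_{\mathcal{F}_2}(G))$.

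For the converse, suppose $\phi_{\mathcal{F}_2}(G)=n$, so $T_{n-1}\preccurlyeq G$ (for $n\geq 2$; the case $n=1$ is trivial). Pathwidth is minor-monotone, and the complete binary tree of radius $r$ has pathwidth $\lceil r/2\rceil$ \citep{Scheffler89}. Thus $\pw(G)\geq \lceil (n-1)/2\rceil$, which gives $\phi_{\mathcal{F}_2}(G)\leq 2\pw(G)+2$. The two inequalities together show that pathwidth and $\phi_{\mathcal{F}_2}$ are tied.

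No step here is a real obstacle, since all the heavy lifting is done by \cref{thm:selfsimilarcase} and \cref{mainextcor}. The only care needed is the indexing: $\phi_{\mathcal{F}_2}$ is defined through the first excluded $T_k$, so the bounds above must be stated with the right offset. One should also check that the hypotheses on the sequence hold, namely $T_i\preccurlyeq T_j$ and $|V(T_i)|<|V(T_j)|$ for $i<j$, which is immediate for complete binary trees.
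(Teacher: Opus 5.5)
Your proof is correct and follows the paper's argument. You invoke part 2 of \cref{RobustSelfSimilarSquences} and \cref{mainextcor}, then show that $\mathcal{F}_2$ is a container for pathwidth, using the excluded-forest theorem of \citet{RS-I} (with the bound of \citep{BRST91}) in one direction and Scheffler's formula $\pw(T_r)=\lceil r/2\rceil$ in the other; your remarks on the index offset in $\phi_{\mathcal{F}_2}$ and on translating (B) into statement~2 via minor-closedness are details the paper leaves implicit.
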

\begin{proof}
Let $T_i$ be the complete binary tree of radius $i$. By part 2 of \cref{RobustSelfSimilarSquences}, the sequence $\mathcal{F}:=(T_1,T_2,\dots )$ is robust and self-similar. Thus, by \cref{mainextcor} it suffices to show that $\mathcal{F}$ is a container for pathwidth. For every positive integer $n$, every graph of sufficiently large pathwidth contains $T_n$ as a minor, by the result of \citet{RS-I} discussed earlier in this section. Conversely, every graph that contains $T_n$ as a minor has pathwidth at least $\lceil\frac{n}{2}\rceil$, as was shown by \citet{Scheffler89}. Thus pathwidth is tied to $\phi_{\mathcal{F}}$, as required. 
\end{proof}

It follows from Euler's formula that every graph of Euler genus $g$ is $K_{3,2g+3}$-minor-free, and $K_{3,2g+3} \subseteq K_{1,2g+3}+\overline{K_2}$, where $K_{1,2g+3}$ is the star graph with $2g+3$ leaves. Since $K_{1,2g+3}$ is a minor of a complete binary tree with at most $4g+5$ leaves, \cref{DoubleApexForest}  implies:

\begin{cor}
For any integer $g\geq 0$ there exists an integer $c$ such that for any graph $G$ of Euler genus $g$, for any bag $B$ of any refined tree-decomposition of $G$, we have $\pw(G[B])\leq c$. In particular, every optimal tree-decomposition of $G$ has a refinement with this property.
\end{cor}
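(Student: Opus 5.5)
The plan is to deduce the corollary directly from \cref{DoubleApexForest}, applied to the minor-closed class $\mathcal{C}_g$ of graphs with Euler genus at most $g$. Euler genus is minor-monotone, so $\mathcal{C}_g$ is minor-closed. It therefore suffices to exhibit a complete binary tree $T$ with $T+\overline{K_2}\notin\mathcal{C}_g$. The implication \ref{2ndformpw}$\Rightarrow$\ref{1stformpw} of \cref{DoubleApexForest} then yields a constant $c$ such that every bag $B$ of every refined tree-decomposition of every $G\in\mathcal{C}_g$ satisfies $\pw(G[B])\leq c$. This covers in particular graphs of Euler genus exactly $g$.

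To find $T$, I use the fact that $K_{3,2g+3}$ has Euler genus greater than $g$. One can check this with Euler's formula: $K_{3,m}$ is bipartite, so every face has length at least $4$, giving $|E|\leq 2(|V|-2+g)$. With $|V|=m+3$ and $|E|=3m$ this becomes $3m\leq 2m+2+2g$, that is, $m\leq 2g+2$. Next, $K_{3,2g+3}\subseteq K_{1,2g+3}+\overline{K_2}$: take the centre of the star together with the two added apices as the side of size $3$, and the leaves as the other side. Then let $T$ be a complete binary tree of radius $r$ large enough that $T$ has at least $2g+3$ leaves. Contracting all internal vertices into the root gives $K_{1,2g+3}\preccurlyeq T$, and hence $K_{1,2g+3}+\overline{K_2}\preccurlyeq T+\overline{K_2}$, since the two apices are adjacent to every vertex of $T$ and so to every branch set. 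Because $\mathcal{C}_g$ is minor-closed and $K_{3,2g+3}\notin\mathcal{C}_g$, it follows that $T+\overline{K_2}\notin\mathcal{C}_g$.

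For the ``in particular'' clause, take any optimal tree-decomposition $\DD$ of $G$. A refinement $\DD'$ of $\DD$ that is refined should exist. The intended argument is to choose, among all tree-decompositions of $G$ that are refinements of $\DD$, one that is atomic within that family, i.e.\ lexicographically minimal in fatness, and then normalise. Refinement is transitive, so the argument of \cref{lem:atomic,lem:refinement}, restricted to refinements of $\DD$, should show that the result is refined. Its bags are subsets of bags of $\DD$, so its width is at most $\tw(G)$, and it is therefore still optimal. Its bags have pathwidth at most $c$ by the first part. The main obstacle I expect is exactly this point: verifying that the minimality argument of \citet{HW26} relativises to refinements of a fixed decomposition. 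If it does not, the fallback is to iterate: while the current decomposition is not refined, replace it by a proper refinement and normalise. Each step strictly decreases a well-founded quantity such as the fatness vector, since a proper refinement can be chosen with lexicographically smaller fatness. The process therefore terminates at a refined decomposition, and that decomposition is a refinement of $\DD$.
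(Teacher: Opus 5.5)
Your proposal is correct and follows the paper's route: you apply \cref{DoubleApexForest} to the minor-closed class of graphs of Euler genus at most $g$, using that $K_{3,2g+3}$ is excluded by Euler's formula, that $K_{3,2g+3}\subseteq K_{1,2g+3}+\overline{K_2}$, and that $K_{1,2g+3}$ is a minor of a large complete binary tree. The paper leaves the ``in particular'' clause implicit, and your worry about it is unnecessary: a proper refinement strictly shrinks the family of all subsets of bags, which lies in the finite power set of $V(G)$, while normalising leaves this family unchanged, so repeatedly passing to a normalised proper refinement of $\DD$ must stop at a refined decomposition that refines $\DD$ and has width at most $\tw(G)$.
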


\begin{thm}
    Given a minor-closed class $\mathcal{C}$, the following statements are equivalent.
    \begin{enumerate}
    \item\label{1stformtw} There is a constant $c$ such that for any bag $B$ of any refined tree-decomposition of any $G\in\mathcal{C}$, we have $\tw(G[B])\leq c$.
    \item\label{2ndformtw} There is a planar graph $G$ such that $G+\overline{K_2}$ is not in $\mathcal{C}$.
    \end{enumerate}
\end{thm}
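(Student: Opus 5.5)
The plan mirrors the proofs of \cref{DoubleApexPath} and \cref{DoubleApexForest}, now with grids. I would take the sequence $\mathcal{F}_3=(P_1\square P_1,P_2\square P_2,\dots)$ of square grids. By part 3 of \cref{RobustSelfSimilarSquences} it is robust and self-similar. By \cref{mainextcor}, statement \ref{1stformtw} for the parameter $\phi_{\mathcal{F}_3}$ is then equivalent to $\mathcal{C}$ not containing every graph $(P_k\square P_k)+\overline{K_2}$. So two things remain: that $\mathcal{F}_3$ is a container for treewidth, and that the condition on grids is equivalent to \ref{2ndformtw}.

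For the container property, I use that the $k\times k$ grid has treewidth $k$ and that treewidth is minor-monotone. Hence $\phi_{\mathcal{F}_3}(G)=n$ implies $P_{n-1}\square P_{n-1}\preccurlyeq G$ and so $\tw(G)\geq n-1$. Conversely, if $P_n\square P_n$ is not a minor of $G$, the Grid Minor Theorem of \citet{RS-V} bounds $\tw(G)$ by a function of $n$. Thus treewidth and $\phi_{\mathcal{F}_3}$ are tied, and (because they are tied) a bound on one over all bags is equivalent to a bound on the other. This gives that \ref{1stformtw} is equivalent to: $\mathcal{C}$ does not contain every $(P_k\square P_k)+\overline{K_2}$.

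For the equivalence with \ref{2ndformtw}, I argue each direction separately.
\begin{itemize}
\item If some $(P_k\square P_k)+\overline{K_2}\notin\mathcal{C}$, then \ref{2ndformtw} holds directly, since $P_k\square P_k$ is planar.
\item Conversely, let $H$ be planar with $H+\overline{K_2}\notin\mathcal{C}$. Every planar graph is a minor of a sufficiently large grid (a standard fact, e.g.\ from (1.5) in \cite{RS-V}). So $H\preccurlyeq P_k\square P_k$ for some $k$, and extending the model by the two apex vertices gives $H+\overline{K_2}\preccurlyeq (P_k\square P_k)+\overline{K_2}$. Since $\mathcal{C}$ is minor-closed, $(P_k\square P_k)+\overline{K_2}\notin\mathcal{C}$.
\end{itemize}

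The only real content is the fact that planar graphs are grid minors and the Grid Minor Theorem, both cited. I expect no genuine obstacle; the one point to check is that ties between parameters transfer the ``bounded on all bags'' statement, which is immediate from the definition.
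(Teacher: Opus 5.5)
Your proof is correct and follows the paper's route: $\mathcal{F}_3$ is robust and self-similar, and \cref{mainextcor} reduces the theorem to showing that grids form a container for treewidth, which follows from $\tw(P_n\square P_n)=n$ and the Grid Minor Theorem. You also spell out the translation between condition (B) of \cref{mainextcor} and the planar-graph condition, using that every planar graph is a minor of some grid; the paper leaves this step implicit.
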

\begin{proof}
    By part 3 of \cref{RobustSelfSimilarSquences}, the sequence $\mathcal{F}:=(P_1\square P_1,P_2\square P_2,\dots )$, where $P_i$ is the $i$-vertex path, is robust and self-similar. Thus, by \cref{mainextcor} it suffices to show that $\mathcal{F}$ is a container for treewidth.
    This follows from the fact that the $P_n\square P_n$ has treewidth $n$, together with the Grid Minor Theorem of \citet{RS-V}.
\end{proof}
\begin{thm}
\label{DoubleApexPlanar}
For any planar graph $H$ there exists an integer $c$ such that for any $(H+\overline{K_2})$-minor-free graph $G$, for any bag $B$ of any refined tree-decomposition of $G$, we have $\tw(G[B])\leq c$. In particular, every refined optimal tree-decomposition of $G$ has this property. 
Moreover, for every positive integer $c$ there is a planar graph $H$ and a bag $B$ of a refined tree-decomposition of $H+\overline{K_2}$ such that $\tw((H+\overline{K_2})[B])>c$.
\end{thm}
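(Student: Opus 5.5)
The first part will follow from \cref{thm:selfsimilarcase} applied to the grid sequence $\mathcal{F}_3=(P_1\square P_1,P_2\square P_2,\dots)$, which is robust and self-similar by part 3 of \cref{RobustSelfSimilarSquences}. Since $H$ is planar, the Grid Minor Theorem for planar graphs (for instance the linear version proved above, or (1.5) of \citet{RS-V}) gives an integer $k_0$ such that $H\preccurlyeq P_{k_0}\square P_{k_0}$. Hence $H+\overline{K_2}\preccurlyeq (P_{k_0}\square P_{k_0})+\overline{K_2}=:F$, with $F\in\mathcal{F}_3^{+2}$. Every $(H+\overline{K_2})$-minor-free graph $G$ is therefore $F$-minor-free.

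By \cref{thm:selfsimilarcase}, there is a constant $c'$ such that $\phi_{\mathcal{F}_3}(G[B])\leq c'$ for every bag $B$ of every refined tree-decomposition of $G$. So $G[B]$ has no $P_{c'}\square P_{c'}$ minor. Treewidth is minor-monotone, and $\mathcal{F}_3$ is a container for treewidth by the Grid Minor Theorem. Hence $\tw(G[B])\leq c$ for some $c$ depending only on $c'$, and thus only on $H$. The ``in particular'' clause is immediate, since a refined optimal tree-decomposition is a refined tree-decomposition; such decompositions exist because an atomic tree-decomposition is optimal and is refined by \cref{lem:refinement}.

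For the ``moreover'' part, I will take $H$ to be the $(c+1)\times(c+1)$ grid $P_{c+1}\square P_{c+1}$, which is planar and has treewidth $c+1$. By \cref{2BagLemma}, $H+\overline{K_2}$ has a refined two-bag tree-decomposition in which each bag $B$ is $V(H)$ together with one apex. Then $(H+\overline{K_2})[B]\supseteq H$, so $\tw((H+\overline{K_2})[B])\geq c+1>c$.

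There is no real obstacle, since everything reduces to earlier results. The only step needing care is checking the container hypotheses: that grid treewidth grows, and that excluding a grid minor bounds treewidth. Both are standard.
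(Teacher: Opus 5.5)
Your proof is correct and is essentially the paper's argument: place $H$ inside a grid so that $H+\overline{K_2}\preccurlyeq F\in\mathcal{F}_3^{+2}$, apply \cref{thm:selfsimilarcase} and the Grid Minor Theorem, and use the two-bag decomposition of \cref{2BagLemma} with the $(c+1)\times(c+1)$ grid for the lower bound. One citation needs fixing: the linear grid minor theorem proved above finds grids inside planar graphs of large treewidth, which is the wrong direction for showing $H$ is a minor of a grid, so instead cite \citet{RST94} as the paper does ($H\preccurlyeq P_i\square P_i$ for some $i\leq 2|V(H)|$), or use your other route of applying (1.5) of \citet{RS-V} to a grid whose treewidth exceeds the bound given for $H$.
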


\begin{proof}
Let $H_i$ be $P_i\square P_i$. 
By \cref{RobustSelfSimilarSquences}, $\mathcal{F}:=(H_1, H_2,\dots)$ is a robust, self-similar sequence.
\citet{RST94} proved that $H$ is a minor of $H_i$ for some $i\leq 2|V(H)|$.
Let $F:=H_i+\overline{K_2}$. So $F\in\mathcal{F}^{+2}$. 
Any $H+\overline{K_2}$-minor-free graph is $F$-minor-free. By \cref{thm:selfsimilarcase}, there exists $c=c(H)$ such that for 
any $(H+\overline{K_2})$-minor-free graph $G$, for any bag $B$ of any refined tree-decomposition of $G$, we have 
$\phi_{\mathcal{F}}(G[B])\leq c$. That is $H_c$ is not a minor of $G[B]$. 
By the Grid Minor Theorem of \citet{RS-V}, $\tw(G[B])$ is at most some function of $c$.

To prove the final claim of the theorem, let $H$ be the $(c+1)\times(c+1)$ grid, and let $\DD$ be the refined tree-decomposition of $H+\overline{K_2}$ given by \cref{2BagLemma}.
For each bag $B$ of $\DD$, the $(c+1)\times(c+1)$-grid is a subgraph of $(H+\overline{K_2})[B]$. Since the $(c+1)\times(c+1)$-grid has treewidth $c+1$, we have $\tw((H+\overline{K_2})[B])\geq c+1>c$.
\end{proof}

The sequence $\mathcal{F}_4:=(C_3\square C_3,C_4\square C_4,\dots)$ is not a container for any commonly studied parameter as far as we know, but we include it to help demonstrate the full power of \cref{thm:selfsimilarcase}.

\subsection{Excluded Ladders}

Given a robust, self-similar parameter $\beta$, \cref{mainextcor} characterises the minor-closed graph classes $\mathcal{C}$ for which $\beta$ is bounded on bags of refined tree-decompositions of graphs from $\mathcal{C}$. We now give an example to show that not all parameters for which this characterisation holds are robust and self-similar.

For a positive integer $n$, the $n \times 2$ grid $P_n\square K_2$ is called the \defn{ladder} of order $n$, denoted \defn{$L_n$}. The sequence $(L_1,L_2,\dots)$ is not self-similar. However, we still prove a variant of \Cref{thm:selfsimilarcase}.
The \defn{$2$-treedepth} of a graph $G$, denoted $\mathdefn{\td_2(G)}$, is defined recursively as follows: 
\begin{enumerate}
    \item $\td_2(K_1):=1$,
    \item if $G$ is not a block, then $\td_2(G)$ is the maximum $2$-treedepth of a block of $G$, and
    \item if $G$ is a block and $|V(G)|\geq 2$, then $\td_2(G):=\min\{1+\td_2(G-v):v\in V(G)\}$.
\end{enumerate} 
\citet{HJMSW22} proved that $(L_1,L_2,\dots)$ is a container for $2$-treedepth.

\begin{thm}
    For any positive integer $d$, there exists an integer $c$ such that for any $(L_d+\overline{K_2})$-minor-free graph $G$, for any bag $B$ of any refined tree-decomposition of $G$, $\td_2(G[B]) \leq c$. 
    Moreover for any integer $c$ there is a positive integer $d$ such that there is a refined tree-decomposition of $L_d+\overline{K_2}$ with a bag $B$ satisfying $\td_2((L_d+\overline{K_2})[B])>c$.
\end{thm}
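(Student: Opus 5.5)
The plan is to reuse the proof of \cref{thm:selfsimilarcase} almost verbatim. Self-similarity is used there only through one fact: a sufficiently large member of the sequence, here a large ladder, contains a $(G_{k_1},d)$-core for a suitable $G_{k_1}$. Ladders are not self-similar, but the argument never needs the core to be modelled by ladders. So I will use a different auxiliary robust, self-similar sequence that is squeezed between ladders.

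\textbf{Choice of auxiliary sequence.} I will take $\mathcal{F}_1=(P_1,P_2,\dots)$ from \cref{RobustSelfSimilarSquences}. The requirements are:
\begin{itemize}[label={}]
\item first, $F=L_d+\overline{K_2}$ is a minor of $P_{k_0}+\overline{K_2}$ for some $k_0$;
\item second, large $\td_2$ of $G[B]$ forces a large ladder minor, by \citet{HJMSW22};
\item third, a large ladder minor yields a $(P_{k_1},D)$-core.
\end{itemize}
The first requirement fails, since $P+\overline{K_2}$ has no $L_d$ minor in the bag part. I therefore keep the ladder sequence for the excluded minor and check which properties the proof actually uses.

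\textbf{Properties used.} Tracing the proof, the claim uses robustness of ladders at level $k_1$. Indeed $L_{2d+1}-v$ contains $L_d$, so ladders are robust. The rest of the proof needs only a $(L_{k_1},D)$-core inside $G_n=L_n$ for some large $n$; this is the single self-similarity instance used. This is where ladders fail, because a model of $L_{k_1}$ in $L_n$ cannot have branch sets containing $L_{k_1}$ minors.

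\textbf{Main obstacle.} The hardest step is replacing this core. I would modify the counting argument so that it uses the $d$-level nested prefix structure on a set $U$ of representatives of size $m^D$, with the property that any external vertex adjacent to a representative in every child class of some node yields a model of $L_{k_1}$ touching $N(x)$. For ladders I would first try $L_n$ with $n=(2k_1)^D$. Split the ladder into $2k_1$ consecutive blocks, recursively, and let the representatives be the rungs at the leaves. If $x$ sees at least one rung in each of the $2k_1$ child blocks of some block, then contracting the blocks gives a ladder $L_{2k_1}$ each of whose rungs touches $N(x)$. Its branch sets are contracted blocks, which are connected and consecutive, so this is still a model of $L_{2k_1}$. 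The claim then applies with $k_1$ chosen for robustness. The probabilistic bound $|V_x|\le(2k_1-1)^D$ and the density contraction step then go through unchanged, giving average degree exceeding $d_F$ and hence an $F$ minor. Finally, by \citet{HJMSW22}, excluding $L_n$ as a minor of $G[B]$ bounds $\td_2(G[B])$.

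\textbf{Lower-bound part.} For the second statement, apply \cref{2BagLemma} to $L_d$. Both bags contain $L_d$, and the ladder container property gives $\td_2(L_d)\to\infty$ as $d\to\infty$. So for large $d$ some bag has $\td_2>c$.
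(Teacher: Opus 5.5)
\textbf{There is a genuine gap.} The reduction to excluding a large ladder minor via \citet{HJMSW22} and the lower-bound part via \cref{2BagLemma} are fine. The problem is the step that is supposed to replace the core.

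The claim you invoke, as in the proof of \cref{thm:selfsimilarcase}, needs $N_{G'}(x)$ to meet \emph{every} branch set of a model of $L_{2k_1}$. That means both the top and the bottom branch set of every rung. Contracting each \emph{row} of each child block does give a model of $L_{2k_1}$; contracting whole blocks gives only a path. But one representative vertex per leaf rung only tells you that $x$ meets one of the two branch sets of each rung.

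This cannot be repaired by a better choice of model. If $x$ is adjacent only to top-row vertices of $L_m$, then $L_m$ has no model of $L_5$ all of whose branch sets meet the top row. A sketch of why:
\begin{itemize}
\item By planarity of the ladder, two disjoint connected sets cannot interleave along the top row.
\item $2$-connectivity of $L_5$ rules out nested spans, so the branch sets appear in the cyclic order of the outer cycle of $L_5$.
\item Some chord $T_jB_j$ is then nested, with disjoint endpoints, under the cycle edge joining the first and last branch sets. Both arcs would have to use the bottom vertex below any top vertex strictly inside the chord, contradicting disjointness.
\end{itemize}
With one representative per leaf, $x$ may well see only top-row representatives, for instance if $G[B]=L_n$ and $x$ is adjacent to the whole top row. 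Such an $x$ satisfies your hypothesis in every child block without yielding an $F$-minor. So the bound $|V_x|\le(2k_1-1)^D$ is unjustified. Moreover, for such an $x$ the term $|N_i|$ in the contraction count is of order $|U|^2$, so the density argument gives nothing.

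\textbf{Missing idea.} You need a way to force a single external vertex to see \emph{both} rows in an interleaved pattern. The paper drops the density argument altogether. It takes $u_i\in M_{a_i}$ and $v_j\in M_{b_j}$. It colours each pair $i<j$ by $\star$ if $u_iv_j\in E(G')$, and otherwise by a common external neighbour of $u_i$ and $v_j$, which exists by \cref{lem:torso}. It then applies the canonical Ramsey theorem of \citet{ErdosRado50}, with four outcomes:
\begin{itemize}
\item A monochromatic external colour $x$ is adjacent to $u_1,\dots,u_{4d+3}$ and to $v_2,\dots,v_{4d+4}$. So every branch set of the middle subladder meets $N_{G'}(x)$, and the claim applies.
\item A monochromatic $\star$ gives a large clique minor.
\item Rainbow and lexicographic colourings give a $K_{2d+2,2d+2}$ minor, which contains $F$.
\end{itemize}
Pairing a top vertex with a bottom vertex in each colour class is exactly what your hierarchical block scheme lacks.
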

\begin{proof}
    For a given positive integer $d$, we fix a large enough integer $c$, to be determined later.
    Let $F := L_d+\overline{K_2}$.
    Let $G$ be an $F$-minor-free graph, and suppose for contradiction that there is a refined tree-decomposition of $G$ with a bag $B$ such that $L_c\preccurlyeq G[B]$.
    Let $\mathcal{M}$ be a model of $L_c$ in $G[B]$.
    Let $G'=G\llbracket B\rrbracket$, and note that $G'$ is also $F$-minor-free since $G'\preccurlyeq G$.
    
    \begin{claim}
        There is no external vertex $x$ of $G'$ such that for any model $\mathcal{M}$ of $L_{2d}$ in $G[B]$, the set $N_{G'}(x)$ contains a vertex of every branch-set of $\mathcal{M}$.
    \end{claim}
    \begin{proofclaim}
    By property \ref{torso2} in \cref{lem:torso}, there is some $v\in B\setminus N_{G'}(x)$.
    Let $G''$ be obtained from $G'$ by contracting every edge between $v$ and an external vertex of $G'$, and note that $B\setminus \{v\}\subseteq N_{G''}(v)$ by property \ref{torso1} in \cref{lem:torso}.
    By the basic properties of ladders, $L_{d} \preccurlyeq L_{2d}-w$ for any $w \in V(L_{2d})$.
    Hence there is a model $\mathcal{M}'$ of $L_{d}$ in $G[B]$ whose branch-sets do not contain $v$.
    We can now extend $\mathcal{M}'$ to a model of $F$ in $G''$ by adding the singleton branch-sets $\{x\}$ and $\{v\}$.
    This contradicts $G'$ being $F$-minor-free.
    \end{proofclaim}

    Let $\mathcal{M} = (M_v : v \in V(L_c))$ be a model of $L_c$ in $G'$.
    Let $a_1,\dots,a_c$ be the vertices of one of the rows of $L_c$ and let $b_1,\dots,b_c$ be the vertices of the other row of $L_c$ in the natural order so that $a_ib_i \in E(L_c)$ for every $i \in \{1,\dots,c\}$.
    For every $i \in \{1,\dots,c\}$, we pick vertices $u_i \in M_{a_i}$ and $v_i\in M_{b_i}$ arbitrarily.

    By property \ref{torso1} in \cref{lem:torso}, for any $u,v \in B$, either $uv \in E(G')$ or there exists an external vertex $x$ of $G'$ with $xu,xv \in E(G')$.

    Define the following colouring of $\binom{\{1,\dots,c\}}{2}$. For a pair of indices $(i,j)$ with $i < j$ as above, let \defn{$\alpha(i,j)$} be $\star$ when $u_iv_j \in E(G')$, and otherwise, let \defn{$\alpha(i,j)$} be an arbitrary external vertex $x$ of $G'$ with $xu,xv \in E(G')$.

    By the canonical Ramsey theorem of \citet{ErdosRado50}, for $c$ large enough, there exists a set of indices $i_1,\dots,i_{4d+4}$, without loss of generality\footnote{We justify this renumbering step in the next paragraph.} renumbered to $1,\dots,4d+4$, such that at least one of the following holds:
    \begin{enumerate}
        \item there exists a colour $\beta$ such that for all integers $i$ and $j$ with $1 \leq i < j \leq 4d+4$, $\alpha(i,j) = \beta$ (\defn{monochromatic} set);
        \item for all integers $i$ and $j$ with $1 \leq i < j \leq 4d+4$, $\alpha(i,j) = \beta_{i,j}$ and these values are pairwise distinct (\defn{rainbow} set);
        \item there exist pairwise distinct colours $\beta_1,\dots,\beta_{4d+3}$ such that for all integers $i$ and $j$ with $1 \leq i < j \leq 4d+4$, $\alpha(i,j) = \beta_i$ (\defn{lexicographic} set);
        \item there exist pairwise distinct colours $\beta_2,\dots,\beta_{4d+4}$ such that for all integers $i$ and $j$ with $1 \leq i < j \leq 4d+4$, $\alpha(i,j) = \beta_j$ (\defn{lexicographic} set).
    \end{enumerate}
    We now explain why the renumbering was indeed without loss of generality. If $i_j < i_{j+1}$ for some index $j$, then merge $M_{a_{i_j}}$ with all the branch sets $M_{a_s}$ with $i_j < s < i_{j+1}$, and similarly merge $M_{b_{i_j}}$ with all the branch sets $M_{b_s}$ with $i_j < s < i_{j+1}$.
    Now, the situation is the same as if the found indices were consecutive.   
    
    Consider the cases one-by-one.

    \textit{Case 1 (monochromatic set).}
    If $\beta \neq \star$, then since $4d+4 \geq 2d$, we obtain a contradiction with the claim.
    Thus, we may assume that $\beta = \star$.
    By merging $M_{a_i}$ with $M_{b_i}$ for every $i \in \{1,\dots,4d+4\}$, we obtain a model of $K_{4d+4}$.
    Indeed, $M_{a_i}$ is adjacent to $M_{b_j}$ for every pair $i$ and $j$ with $1 \leq i < j \leq 4d+4$.
    Since $|V(F)| = 2d+2 \leq 4d+4$, this is a contradiction.

    \textit{Case 2 (rainbow set).} 
    At most one of the colours among $\beta_{i,j}$ is $\star$.
    Without loss of generality, suppose that it is $\beta_{4d+3,4d+4}$.
    Again, we merge $M_{a_i}$ with $M_{b_i}$ for every $i \in \{1,\dots,4d+4\}$.
    Next, for every $i \in \{1,\dots,2d+2\}$, we add the set $\{\beta_{i,j} : j \in \{2d+3, \dots , 4d+4\}\}$ to $M_{a_i} \cup M_{b_i}$.
    This gives a model of $K_{2d+2,2d+2}$ in $G'$, however, $K_{2d+2}$ is a minor of $K_{2d+2,2d+2}$, hence, this again contradicts $G'$ being $F$-minor-free.
    
    \textit{Case 3 (lexicographic set).}
    At most one of the colours among $\beta_{i}$ is $\star$.
    Without loss of generality, suppose that it is $\beta_{4d+3}$.
    We merge $M_{a_i}$ with $M_{b_i}$ for every $i \in \{1,\dots,4d+4\}$.
    Next, for every $i \in \{1,\dots,2d+2\}$, we add $\{\beta_{i}\}$ to $M_{a_i} \cup M_{b_i}$.
    This again gives a model of $K_{2d+2,2d+2}$ in $G'$, however, $K_{2d+2}$ is a minor of $K_{2d+2,2d+2}$, hence, this again contradicts $G'$ being $F$-minor-free.
    The argument for case 4 is completely symmetric.
    
    Since each case gives a contradiction, we obtain the final contradiction, which completes the proof.
\end{proof}

{\fontsize{10pt}{11pt}\selectfont
\def\soft#1{\leavevmode\setbox0=\hbox{h}\dimen7=\ht0\advance \dimen7 by-1ex\relax\if t#1\relax\rlap{\raise.6\dimen7 \hbox{\kern.3ex\char'47}}#1\relax\else\if T#1\relax \rlap{\raise.5\dimen7\hbox{\kern1.3ex\char'47}}#1\relax \else\if d#1\relax\rlap{\raise.5\dimen7\hbox{\kern.9ex \char'47}}#1\relax\else\if D#1\relax\rlap{\raise.5\dimen7 \hbox{\kern1.4ex\char'47}}#1\relax\else\if l#1\relax \rlap{\raise.5\dimen7\hbox{\kern.4ex\char'47}}#1\relax \else\if L#1\relax\rlap{\raise.5\dimen7\hbox{\kern.7ex \char'47}}#1\relax\else\message{accent \string\soft \space #1 not defined!}#1\relax\fi\fi\fi\fi\fi\fi}

}

\begin{thebibliography}{52}
\providecommand{\natexlab}[1]{#1}
\providecommand{\msn}[1]{MR:\,\href{http://www.ams.org/mathscinet-getitem?mr=MR{#1}}{#1}}
\providecommand{\ZBL}[1]{Zbl:\,\href{https://www.zentralblatt-math.org/zmath/en/search/?q=an:#1}{#1}}
\providecommand{\url}[1]{\texttt{#1}}
\providecommand{\urlprefix}{}
\expandafter\ifx\csname urlstyle\endcsname\relax
  \providecommand{\doi}[1]{doi:\discretionary{}{}{}#1}\else
  \providecommand{\doi}{doi:\discretionary{}{}{}\begingroup \urlstyle{rm}\Url}\fi

\bibitem[{Abrishami et~al.(2025)Abrishami, Alecu, Chudnovsky, Hajebi, and Spirkl}]{AACHS25}
\textsc{Tara Abrishami, Bogdan Alecu, Maria Chudnovsky, Sepehr Hajebi, and Sophie Spirkl}.
\newblock \href{https://arxiv.org/abs/2307.13684}{Induced subgraphs and tree decompositions {X}. {T}owards logarithmic treewidth for even-hole-free graphs}.
\newblock 2025, arXiv:2307.13684.

\bibitem[{Barrera-Cruz et~al.(2019)Barrera-Cruz, Felsner, M\'{e}sz\'{a}ros, Micek, Smith, Taylor, and Trotter}]{BFMMSTT19}
\textsc{Fidel Barrera-Cruz, Stefan Felsner, Tam\'{a}s M\'{e}sz\'{a}ros, Piotr Micek, Heather Smith, Libby Taylor, and William~T. Trotter}.
\newblock \href{https://doi.org/10.1016/j.jctb.2019.02.003}{Separating tree-chromatic number from path-chromatic number}.
\newblock \emph{J. Combin. Theory Ser. B}, 138:206--218, 2019.

\bibitem[{Berger and Seymour(2024)}]{BS24}
\textsc{Eli Berger and Paul Seymour}.
\newblock \href{https://doi.org/10.1007/s00493-024-00088-1}{Bounded-diameter tree-decompositions}.
\newblock \emph{Combinatorica}, 44(3):659--674, 2024.

\bibitem[{Biedl and Vel{\'{a}}zquez(2013)}]{BV13}
\textsc{Therese Biedl and Lesvia Elena~Ruiz Vel{\'{a}}zquez}.
\newblock \href{https://doi.org/10.1016/j.comgeo.2012.09.004}{Drawing planar 3-trees with given face areas}.
\newblock \emph{Comput. Geom.}, 46(3):276--285, 2013.

\bibitem[{Bienstock et~al.(1991)Bienstock, Robertson, Seymour, and Thomas}]{BRST91}
\textsc{Dan Bienstock, Neil Robertson, Paul Seymour, and Robin Thomas}.
\newblock \href{https://doi.org/10.1016/0095-8956(91)90068-U}{Quickly excluding a forest}.
\newblock \emph{J. Combin. Theory Ser. B}, 52(2):274--283, 1991.

\bibitem[{Bodlaender(1998)}]{Bodlaender98}
\textsc{Hans~L. Bodlaender}.
\newblock \href{https://doi.org/10.1016/S0304-3975(97)00228-4}{A partial $k$-arboretum of graphs with bounded treewidth}.
\newblock \emph{Theoret. Comput. Sci.}, 209(1-2):1--45, 1998.

\bibitem[{Chudnovsky et~al.(2024)Chudnovsky, Gartland, Hajebi, Lokshtanov, and Spirkl}]{CGHLS24}
\textsc{Maria Chudnovsky, Peter Gartland, Sepehr Hajebi, Daniel Lokshtanov, and Sophie Spirkl}.
\newblock \href{https://arxiv.org/abs/2402.14211}{Induced subgraphs and tree decompositions {XV}. {E}ven-hole-free graphs with bounded clique number have logarithmic treewidth}.
\newblock 2024, arXiv:2402.14211.

\bibitem[{Chuzhoy and Tan(2019)}]{CT19}
\textsc{Julia Chuzhoy and Zihan Tan}.
\newblock \href{https://doi.org/10.1137/1.9781611975482.88}{Towards tight(er) bounds for the excluded grid theorem}.
\newblock In \textsc{Timothy~M. Chan}, ed., \emph{Proc. 13th Annual {ACM-SIAM} Symp. Discrete Algorithms \textup{({SODA} '19)}}, pp. 1445--1464. 2019.

\bibitem[{Coudert et~al.(2016)Coudert, Ducoffe, and Nisse}]{CDN16}
\textsc{David Coudert, Guillaume Ducoffe, and Nicolas Nisse}.
\newblock \href{https://doi.org/10.1137/15M1034039}{To approximate treewidth, use treelength!}
\newblock \emph{{SIAM} J. Discrete Math.}, 30(3):1424--1436, 2016.

\bibitem[{Dallard et~al.(2024)Dallard, Milani\v{c}, and \v{S}torgel}]{DMS24a}
\textsc{Cl\'{e}ment Dallard, Martin Milani\v{c}, and Kenny \v{S}torgel}.
\newblock \href{https://doi.org/10.1016/j.jctb.2023.10.006}{Treewidth versus clique number. {II}. {T}ree-independence number}.
\newblock \emph{J. Combin. Theory Ser. B}, 164:404--442, 2024.

\bibitem[{Dehkordi and Farr(2021)}]{DF21}
\textsc{Hooman~Reisi Dehkordi and Graham Farr}.
\newblock \href{https://doi.org/10.37236/8816}{Non-separating planar graphs}.
\newblock \emph{Electron. J. Combin.}, 28(1):1, 2021.

\bibitem[{Diestel(2018)}]{Diestel5}
\textsc{Reinhard Diestel}.
\newblock Graph theory, vol. 173 of \emph{Graduate Texts in Mathematics}.
\newblock Springer, 5th edn., 2018.

\bibitem[{Diestel and M{\"u}ller(2016)}]{DM16}
\textsc{Reinhard Diestel and Malte M{\"u}ller}.
\newblock \href{https://doi.org/10.1017/S0963548315000287}{A short proof for lean tree-decompositions}.
\newblock \emph{Combin. Probab. Comput.}, 25(5):647--649, 2016.

\bibitem[{Diestel and M\"{u}ller(2018)}]{DM18}
\textsc{Reinhard Diestel and Malte M\"{u}ller}.
\newblock \href{https://doi.org/10.1007/s00493-016-3516-5}{Connected tree-width}.
\newblock \emph{Combinatorica}, 38(2):381--398, 2018.

\bibitem[{Dourisboure and Gavoille(2007)}]{DG07}
\textsc{Yon Dourisboure and Cyril Gavoille}.
\newblock \href{https://doi.org/10.1016/j.disc.2005.12.060}{Tree-decompositions with bags of small diameter}.
\newblock \emph{Discrete Math.}, 307(16):2008--2029, 2007.

\bibitem[{Dragan and Köhler(2025)}]{DK25}
\textsc{Feodor~F. Dragan and Ekkehard Köhler}.
\newblock \href{https://arxiv.org/abs/2503.05661}{Graph parameters that are coarsely equivalent to path-length}.
\newblock 2025, arXiv:2503.05661.

\bibitem[{Dujmovi{\'c} et~al.(2026)Dujmovi{\'c}, Morin, Norin, and Wood}]{DMNW}
\textsc{Vida Dujmovi{\'c}, Pat Morin, Sergey Norin, and David~R. Wood}.
\newblock \href{https://arxiv.org/abs/2507.03163v2}{3-{C}olouring planar graphs}.
\newblock 2026, arXiv:2507.03163.

\bibitem[{Erde and Wei{\ss}auer(2019)}]{EW19}
\textsc{Joshua Erde and Daniel Wei{\ss}auer}.
\newblock \href{https://doi.org/10.1137/18M1198296}{A short derivation of the structure theorem for graphs with excluded topological minors}.
\newblock \emph{SIAM J. Discrete Math.}, 33(3):1654--1661, 2019.

\bibitem[{Erd\H{o}s and Rado(1950)}]{ErdosRado50}
\textsc{Paul Erd\H{o}s and Richard Rado}.
\newblock \href{https://doi.org/10.1112/jlms/s1-25.4.249}{A combinatorial theorem}.
\newblock \emph{J. London Math. Society}, 25:249--255, 1950.

\bibitem[{Felsner et~al.(2003)Felsner, Liotta, and Wismath}]{FLW-JGAA03}
\textsc{Stefan Felsner, Giussepe Liotta, and Stephen~K. Wismath}.
\newblock \href{https://doi.org/10.7155/jgaa.00075}{Straight-line drawings on restricted integer grids in two and three dimensions}.
\newblock \emph{J. Graph Algorithms Appl.}, 7(4):363--398, 2003.

\bibitem[{Grigoriev(2011)}]{Grigoriev11}
\textsc{Alexander Grigoriev}.
\newblock \href{https://dmtcs.episciences.org/539}{Tree-width and large grid minors in planar graphs}.
\newblock \emph{Discrete Math. \& Theoret. Comput. Sci.}, 13(1):13--20, 2011.

\bibitem[{Harvey and Wood(2017)}]{HW17}
\textsc{Daniel~J. Harvey and David~R. Wood}.
\newblock \href{https://doi.org/10.1002/jgt.22030}{Parameters tied to treewidth}.
\newblock \emph{J. Graph Theory}, 84(4):364--385, 2017.

\bibitem[{Hendrey and Wood(2026)}]{HW26}
\textsc{Kevin Hendrey and David~R. Wood}.
\newblock \href{https://doi.org/10.1017/S0963548326100522}{Optimal tree-decompositions with bags of bounded treewidth}.
\newblock \emph{Combin. Probab. Comput.}, 2026.
\newblock arXiv:2511.22196.

\bibitem[{Hickingbotham(2025)}]{Hickingbotham25}
\textsc{Robert Hickingbotham}.
\newblock \href{https://arxiv.org/abs/2501.10840}{Graphs quasi-isometric to graphs with bounded treewidth}.
\newblock 2025, arXiv:2501.10840.

\bibitem[{Huynh et~al.(2022)Huynh, Joret, Micek, Seweryn, and Wollan}]{HJMSW22}
\textsc{Tony Huynh, Gwena{\"e}l Joret, Piotr Micek, Micha{\l}~T. Seweryn, and Paul Wollan}.
\newblock \href{https://doi.org/10.1007/s00493-021-4592-8}{Excluding a ladder}.
\newblock \emph{Combinatorica}, 42(3):405--432, 2022.

\bibitem[{Huynh and Kim(2017)}]{HK17}
\textsc{Tony Huynh and Ringi Kim}.
\newblock \href{https://doi.org/10.1002/jgt.22121}{Tree-chromatic number is not equal to path-chromatic number}.
\newblock \emph{J. Graph Theory}, 86(2):213--222, 2017.

\bibitem[{Huynh et~al.(2021)Huynh, Reed, Wood, and Yepremyan}]{HRWY21}
\textsc{Tony Huynh, Bruce Reed, David~R. Wood, and Liana Yepremyan}.
\newblock \href{https://doi.org/10.1007/978-3-030-62497-2_30}{Notes on tree- and path-chromatic number}.
\newblock In \emph{2019--20 {MATRIX} Annals}, vol.~4 of \emph{MATRIX Book Ser.}, pp. 489--498. Springer, 2021.

\bibitem[{Kostochka(1982)}]{Kostochka82}
\textsc{Alexandr~V. Kostochka}.
\newblock \href{https://kostochk.web.illinois.edu/docs/old/translation86.pdf}{The minimum {H}adwiger number for graphs with a given mean degree of vertices}.
\newblock \emph{Metody Diskret. Analiz.}, 38:37--58, 1982.

\bibitem[{Kostochka(1984)}]{Kostochka84}
\textsc{Alexandr~V. Kostochka}.
\newblock \href{https://doi.org/10.1007/BF02579141}{Lower bound of the {H}adwiger number of graphs by their average degree}.
\newblock \emph{Combinatorica}, 4(4):307--316, 1984.

\bibitem[{Koutsoutis et~al.(2026)Koutsoutis, Krause, Liu, Redzic, and Ueckerdt}]{KKLRU}
\textsc{Alex Koutsoutis, Kilian Krause, Chun-Hung Liu, Mirza Redzic, and Torsten Ueckerdt}.
\newblock \href{https://doi.org/10.4230/LIPIcs.WG.2026.31}{On the relation between treewidth, tree-independence number, and tree-chromatic number of graphs}.
\newblock In \textsc{Jan Goedgebeur and Pawe{\l} Rz\k{a}\.{z}ewski}, eds., \emph{Proc. 52nd International Workshop on Graph-Theoretic Concepts in Computer Science \textup{(WG 2026)}}, vol. 376 of \emph{LIPIcs}, pp. 31:1--31:9. Schloss Dagstuhl, 2026.

\bibitem[{Liu et~al.(2024)Liu, Norin, and Wood}]{LNW}
\textsc{Chun-Hung Liu, Sergey Norin, and David~R. Wood}.
\newblock \href{https://arxiv.org/abs/2410.20333}{Product structure and tree decompositions}.
\newblock 2024, arXiv:2410.20333.

\bibitem[{Mader(1968)}]{Mader68}
\textsc{Wolfgang Mader}.
\newblock \href{https://doi.org/10.1007/BF01350657}{Homomorphies\"atze f\"ur {G}raphen}.
\newblock \emph{Math. Ann.}, 178:154--168, 1968.

\bibitem[{Mohar and Thomassen(2001)}]{MoharThom}
\textsc{Bojan Mohar and Carsten Thomassen}.
\newblock Graphs on surfaces.
\newblock Johns Hopkins University Press, 2001.

\bibitem[{Myers and Thomason(2005)}]{MT05}
\textsc{Joseph~Samuel Myers and Andrew Thomason}.
\newblock \href{https://doi.org/10.1007/s00493-005-0044-0}{The extremal function for noncomplete minors}.
\newblock \emph{Combinatorica}, 25(6):725--753, 2005.

\bibitem[{Müller(2012)}]{Muller12}
\textsc{Malte Müller}.
\newblock \href{https://arxiv.org/abs/1211.7353v1}{Connected tree-width}.
\newblock 2012, arXiv:1211.7353v1.

\bibitem[{Ne{\v{s}}et{\v{r}}il and Ossona~de Mendez(2012)}]{Sparsity}
\textsc{Jaroslav Ne{\v{s}}et{\v{r}}il and Patrice Ossona~de Mendez}.
\newblock \href{https://doi.org/10.1007/978-3-642-27875-4}{Sparsity}, vol.~28 of \emph{Algorithms and Combinatorics}.
\newblock Springer, 2012.

\bibitem[{Nguyen et~al.(2025)Nguyen, Scott, and Seymour}]{NSS-AS-I}
\textsc{Tung Nguyen, Alex Scott, and Paul Seymour}.
\newblock \href{https://arxiv.org/abs/2501.09839}{Asymptotic structure. {I}. {C}oarse tree-width}.
\newblock 2025, arXiv:2501.09839.

\bibitem[{Norin et~al.(2020)Norin, Reed, Thomason, and Wood}]{NRTW20}
\textsc{Sergey Norin, Bruce Reed, Andrew Thomason, and David~R. Wood}.
\newblock \href{https://doi.org/10.37236/8847}{A lower bound on the average degree forcing a minor}.
\newblock \emph{Electron. J. Combin.}, 27:P2.4, 2020.

\bibitem[{Reed and Wood(2016)}]{ReedWood16}
\textsc{Bruce Reed and David~R. Wood}.
\newblock \href{https://doi.org/10.1017/S0963548315000073}{Forcing a sparse minor}.
\newblock \emph{Combin. Probab. Comput.}, 25(2):300--322, 2016.

\bibitem[{Reed(1997)}]{Reed97}
\textsc{Bruce~A. Reed}.
\newblock \href{https://doi.org/10.1017/CBO9780511662119.006}{Tree width and tangles: a new connectivity measure and some applications}.
\newblock In \textsc{R.~A. Bailey}, ed., \emph{Surveys in Combinatorics}, vol. 241 of \emph{London Math. Soc. Lecture Note Ser.}, pp. 87--162. Cambridge Univ. Press, 1997.

\bibitem[{Robertson and Seymour(1983)}]{RS-I}
\textsc{Neil Robertson and Paul Seymour}.
\newblock \href{https://doi.org/10.1016/0095-8956(83)90079-5}{Graph minors. {I}. {E}xcluding a forest}.
\newblock \emph{J. Combin. Theory Ser. B}, 35(1):39--61, 1983.

\bibitem[{Robertson and Seymour(1984)}]{RS-III}
\textsc{Neil Robertson and Paul Seymour}.
\newblock \href{https://doi.org/10.1016/0095-8956(84)90013-3}{Graph minors. {III}. {P}lanar tree-width}.
\newblock \emph{J. Combin. Theory Ser. B}, 36(1):49--64, 1984.

\bibitem[{Robertson and Seymour(1986)}]{RS-V}
\textsc{Neil Robertson and Paul Seymour}.
\newblock \href{https://doi.org/10.1016/0095-8956(86)90030-4}{Graph minors. {V}. {E}xcluding a planar graph}.
\newblock \emph{J. Combin. Theory Ser. B}, 41(1):92--114, 1986.

\bibitem[{Robertson et~al.(1994)Robertson, Seymour, and Thomas}]{RST94}
\textsc{Neil Robertson, Paul Seymour, and Robin Thomas}.
\newblock \href{https://doi.org/10.1006/jctb.1994.1073}{Quickly excluding a planar graph}.
\newblock \emph{J. Combin. Theory Ser. B}, 62(2):323--348, 1994.

\bibitem[{Scheffler(1989)}]{Scheffler89}
\textsc{Petra Scheffler}.
\newblock Die baumweite von graphen als ein ma{\ss} f{\"u}r die kompliziertheit algorithmischer probleme.
\newblock Ph.D. thesis, Akademie der Wissenschaften der DDR, Berlin, Germany, 1989.

\bibitem[{Seymour(2016)}]{Seymour16}
\textsc{Paul Seymour}.
\newblock \href{https://doi.org/10.1016/j.jctb.2015.08.002}{Tree-chromatic number}.
\newblock \emph{J. Combin. Theory Series B}, 116:229--237, 2016.

\bibitem[{Thomas(1990)}]{Thomas90}
\textsc{Robin Thomas}.
\newblock \href{https://doi.org/10.1016/0095-8956(90)90130-R}{A {M}enger-like property of tree-width: {T}he finite case}.
\newblock \emph{J. Combin. Theory Ser. B}, 48(1):67--76, 1990.

\bibitem[{Thomason(1984)}]{Thomason84}
\textsc{Andrew Thomason}.
\newblock \href{https://doi.org/10.1017/S0305004100061521}{An extremal function for contractions of graphs}.
\newblock \emph{Math. Proc. Cambridge Philos. Soc.}, 95(2):261--265, 1984.

\bibitem[{Thomason(2001)}]{Thomason01}
\textsc{Andrew Thomason}.
\newblock \href{https://doi.org/10.1006/jctb.2000.2013}{The extremal function for complete minors}.
\newblock \emph{J. Combin. Theory Ser. B}, 81(2):318--338, 2001.

\bibitem[{Thomason and Wales(2022)}]{TW22}
\textsc{Andrew Thomason and Matthew Wales}.
\newblock \href{https://doi.org/10.1002/jgt.22811}{On the extremal function for graph minors}.
\newblock \emph{J. Graph Theory}, 101(1):66--78, 2022.

\bibitem[{Wei{\ss}auer(2019)}]{Weissauer19}
\textsc{Daniel Wei{\ss}auer}.
\newblock \href{https://doi.org/10.1137/17M1145306}{On the block number of graphs}.
\newblock \emph{SIAM J. Discrete Math.}, 33(1):346--357, 2019.

\bibitem[{Yolov(2018)}]{Yolov18}
\textsc{Nikola Yolov}.
\newblock \href{https://doi.org/10.1137/1.9781611975031.16}{Minor-matching hypertree width}.
\newblock In \textsc{Artur Czumaj}, ed., \emph{Proc. 29th Annual {ACM-SIAM} Symposium on Discrete Algorithms \textup{(SODA 2018)}}, pp. 219--233. {SIAM}, 2018.

\end{thebibliography}
\end{document}